\newtheorem{theorem}{Theorem}[section]
\newtheorem{proposition}[theorem]{Proposition}
\newtheorem{definition}[theorem]{Definition}
\newtheorem{kdefinition}[theorem]{(Key) Definition}
\newtheorem{lemma}[theorem]{Lemma}
\newtheorem{notation}[theorem]{Notation}
\newtheorem{corollary}[theorem]{Corollary}
\newtheorem{example}[theorem]{Example}
\newtheorem{remark}[theorem]{Remark}
\newtheorem{question}[theorem]{Question}
\newtheorem*{theorem*} {Theorem}
\newtheorem*{corollary*} {Corollary}
\newtheorem*{kquestion*} {Known question}
\newtheorem*{question*} {Question}
\newtheorem*{remark*}{Remark}
\newtheorem*{example*}{Example}
\newcommand{\Real}{\mathbb R}
\newcommand{\Fld}{\mathbb K}
\newcommand{\one}{\mathbb{1}}
\newcommand{\zero}{\mathbb{0}}
\newcommand{\Trop}{\mathbb T}
\newcommand{\trop}[1]{\mathcal{#1}}
\newcommand{\tB}{\trop{B}}
\newcommand{\tC}{\trop{C}}
\newcommand{\tG}{\trop{G}}
\newcommand{\tH}{\trop{H}}
\newcommand{\tT}{\trop{T}}
\newcommand{\tW}{\trop{W}}
\newcommand{\al}{\alpha}
\newcommand{\sig}{\sigma}
\newcommand{\rnk}{\operatorname{rk}}
\newcommand{\OP}{\left(}
\newcommand{\CP}{\right)}
\newcommand{\Cl}[1]{#1^\bullet}
    \newenvironment{proof}{
    \smallskip
    \noindent\emph{Proof.}}{\hfill\(\Box\)
    \bigskip
    } \fi
\newcommand{\vMat}[4]{\OP \begin{array}{ll}
  #1 & #2 \\
  #3 & #4
\end{array}\CP}
\newcommand{\vvMat}[9]{\small{\OP \begin{array}{ccc}
  #1 & #2 & #3\\
  #4 & #5 & #6\\
  #7 & #8 & #9\\
\end{array}\CP}}
\newcommand{\ifdef}[3]{\ifthenelse{\equal{#1}{true}}{#2}{#3}}
\numberwithin{equation}{section}
\newcommand{\ds}[1]{\ {#1} \ }
\def\mfa{\frak a}
\def\nucong{\cong_{\nu}}
\newcommand\cl[2]{\rwcl{#1}{\str}{#2}}
\newcommand\clrw[3]{#1[#3,#2]}
\newcommand\rwcl[3]{#1[#2,#3]}
\def\one{\mathbb 1} \def\zero{\mathbb 0}
\def\rone{\one} \def\rzero{\zero}
\def\len{\ell}
\def\pth{p}
\def\str{\, \ast \,}
\newcommand\thmref[2]{\pSkip\textbf{Theorem #1. }\emph{#2}\pSkip}
\newcommand\corr[1]{\pSkip\textbf{Corollary. }\emph{#1}\pSkip}
\def\pSkip{{\vskip 1.5mm}\noindent}
\def\ltw{0.7\textwidth}
\newcommand\HH{\mathscr{H}}
\newcommand\MM{\mathscr{M}}
\def\semifield{semifield}
\def\semiring{semiring}
\def\iplus{\; \widetilde{+} \;}
\def\idot{\; \widetilde{\cdot} \;}
\newcommand\boxtext[1]{\pSkip \qquad \qquad \qquad \framebox{\parbox{\ltw}{#1}}\pSkip}
\def\gV{V_G}
\def\gE{E_G}
\def\Pow{\operatorname{Pw}}
\def\iff{\Leftrightarrow}
\def\imp{\Rightarrow}
\def\bF{\mathbb F}
\def\bK{\mathbb K}
\def\bN{\mathbb N}
\def\blA{A_{\bool}}
\def\blB{B_{\bool}}
\def\blD{D_{\bool}}
\def\sbA{A_{\sbool}}
\def\sbB{B_{\sbool}}
\def\sbC{C_{\sbool}}
\def\sbD{D_{\sbool}}
\def\fA{A_{\bF}}
\def\f2A{A_{\bF_2}}
\def\blA{A_{\bool}}
\def\ffA{A_{F}}
\def\kA{A_{\bK}}
\def\fB{B_{\bF}}
\def\fD{D_{\bF}}
\def\adj{\operatorname{adj}}
\def\inc{\operatorname{inc}}
\def\adA{A_{\adj}}
\def\inA{A_{\inc}}
\def\Gr{G}
\def\dGr{G}
\def\H{\HH}
\def\M{\MM}
\def\inM{\M_{\inc}}
\def\Cl{\operatorname{Col}}
\def\Rw{\operatorname{Row}}
\def\sm{\setminus}
\def\1{1^\nu}
\def\ntH{\tH^{\operatorname{c}}}
\newcommand{\etype}[1]{\renewcommand{\labelenumi}{(#1{enumi})}}
\def\eroman{\etype{\roman}}
\def\ealph{\etype{\alph}}
\def\({\left(}
\def\){\right)}
\def\tGz{{\tG_0}}
\def\tTzB{\{0,1\}}
\def\bool{\mathbb B}
\def\sbool{{\mathbb{SB}}}
\newcommand{\per}[1]{\operatorname{per}({#1})}
\begin{document}

%******************************* title ***********************************

\title[New Representations of Matroids and Generalizations]
{New Representations of Matroids and Generalizations}

%******************************* authors *********************************

\author{Zur Izhakian}

\address{   School of Mathematical Sciences, Tel Aviv
     University, Ramat Aviv,  Tel Aviv 69978, Israel.
\vskip 1pt
     Department of Mathematics, Bar-Ilan University, Ramat-Gan 52900,
Israel.
    }
    \email{zzur@math.biu.ac.il}

\thanks{The research of the first author has  been  supported  by the
Israel Science Foundation (ISF grant No.  448/09).}

\thanks{The research of the first author  has been supported  by the
Oberwolfach Leibniz Fellows Programme (OWLF), Mathematisches
Forschungsinstitut Oberwolfach, Germany.}

\author{John Rhodes}
\address{Department of Mathematics, University of California, Berkeley,
970 Evans Hall \#3840, Berkeley, CA 94720-3840 USA.}
\email{blvdbastille@aol.com;rhodes@math.berkeley.edu}

\thanks{\textbf{Acknowledgement:}
The authors would like to thank Professor Benjamin Steinberg for
detailed comments following careful reading of the manuscript.
Part of this work was done during the authors' stay at the
Mathematisches Forschungsinstitut Oberwolfach (MFO);  the authors
are very grateful to MFO for the hospitality and excellent working
conditions.}

%******************************* AMS classification ***********************
\subjclass[2010]{Primary 52B40, 05B35, 03G05, 06G75, 55U10;
Secondary 16Y60, 20M30, 14T05.}

%******************************* date *************************************
\date{\today }

%******************************* keywords *********************************

\keywords{Boolean and superboolean algebra, Idempotent \semiring
s, Hereditary collections, Abstract simplicial complexes,
Matroids, Superboolean representations.}

%******************************* abstract *********************************

\begin{abstract} We extend the notion of
matroid representations by matrices over fields and consider new
representations of matroids by matrices over finite semirings,
more precisely over the boolean and the superboolean semirings.
This idea of representations  is generalized naturally to include
also hereditary collections. We show that a matroid that can be
directly decomposed as matroids, each of which is representable
over a field, has a boolean representation, and more generally
that any arbitrary hereditary collection is
superboolean-representable.
% These results lead to a new set of open  questions.
\end{abstract}

\maketitle

%******************************* remarks *********************************

%%%%%%%%%%%%%%%%%%%%%%%%%%%%%%%%% section %%%%%%%%%%%%%%%%%%%%%%%%%%%%%

%{\small \tableofcontents}
%******************************* section *********************************
\section{Introduction}

Traditionally, matroids have been represented by using matrices
defined  over fields
\cite{OrientedMatroids,murota,oxley:matroid,qtheory,
tutte2,White,whitney}, mainly finite fields
\cite{Revyakin2000,Whittle}, or partial fields
\cite{SempleWhittle}; matroids that do have such a representation
are termed field-representable. It is well known that not every
matroid is field-representable; one of the most celebrated
examples for such a non-representable matroid over fields is the
direct sum of the Fano and the non-Fano matroids (see
\cite[Corollary 5.4.]{Oxley03whatis}).
 Over the years much effort has been invested in the attempt to
 specify families of matroids that are field-representable, this has been especially studied  with
 respect to the characteristic of a ground field used for constructing the matroid representation.
%
 %ADD
% The question which matroids are field-representable or regular (equivalently, are representable
% over any field \cite{tutte1}) is much more complicated.
%The question which matroids are field-representable or regular
%(equivalently, are representable
% over any field \cite{tutte1}) is much more complicated.

In this paper we introduce the idea of replacing the customarily
ground structure of the field one uses for representations of
matroids and consider instead  representations of matroids by
matrices
 over \semiring s; in particular over a certain
3-element supertropical \semiring  \
\cite{IzhakianRowen2007SuperTropical}, that is the
\emph{superboolean \semiring} \ $\sbool$. This \semiring \ is a
``cover'' of the   boolean \semiring, defined over the element set
$\sbool := \{ 1, \1,0 \}$, and its arithmetics is a modification
of the familiar boolean algebra (see \S\ref{sec:superboolean}).
Although the lack of negation, the superboolean   structure allows
natural algebraic analogs of classical notions such as dependence
of vectors and singularity of matrices which are so important for
a representation theory. These notions lead naturally to the key
setting of \emph{vector hereditary collections} (cf. Definition
\ref{defn:VecHC}) which are at the heart of our representation
approach.

A matroid
%(or more generally, a hereditary collection)
that has a representation by a superboolean matrix (i.e., is
isomorphic to a vector hereditary collection) is said to be
superboolean-representable. Using this concept of representations,
we show that in a sense all matroids are ``super-regular'',
namely, all matroids are superboolean-representable. It turns out
that this representation concept  is much broader and is feasible
not only for matroids but also for (finite) hereditary collections
-- a more general set-theoretic objects known also as abstract
simplicial complexes
\cite{QuillenK-Theory,QuillenHomotopy,Spanier}. One of our main
theorems in this paper is the following:
\thmref{\ref{thm:hdCol}}{Any hereditary collection is
superboolean-representable.}
%
% ADD
%Therefore, any hereditary collection, and obviously any  matroid,
% is ``super-regular''  as well.
The proof of this theorem shows an explicit simple construction of
such superboolean representations.

Focusing on boolean representations, these are representations
determined by matrices having only $1,0$ entries, we prove:
\thmref{\ref{thm:boolFRep}}{Any field-representable matroid is
also boolean-representable.}
We also provide an explicit algorithm to produce the matroid's
boolean-representation from its field representation. More
generally, we extend this result to obtain the following:
\thmref{\ref{thm:matDecom}}{Matroids that are directly
decomposable into field-representable matroids also have boolean
representation.}
Having this representation approach, matroids that are not
representable over fields, for example, as mentioned above,  the
direct sum of the Fano matroid and the non-Fano matroid, can be
represented by boolean matrices, cf. \S\ref{ssec:Fano}.

As expected,  our new representation ideas and the development
along this paper pave the way to a new set of open questions
stated in~\S\ref{sec:openQ}.

The appendixes indicates the generalization of these
representation ideas to matroid representations by matrices that
take  place over an arbitrary supertropical semiring
\cite{IzhakianRowen2007SuperTropical}. A major example for such a
 semiring is the extended tropical semiring
\cite{zur05TropicalAlgebra}, which is a ``cover'' of the standard
tropical (max-plus) semiring \cite{ABG,Develin2003,IMS,Sturm3},
having a much richer algebraic structure that allows the carry of
systematic theory of tropical linear algebra
\cite{zur05TropicalAlgebra,IzhakianRowen2008Matrices,IzhakianRowen2009Equations,IzhakianRowen2010MatricesIII,
IzhakianRowen2009TropicalRank}.

Using a trivial embedding of the superboolean semiring in any
supertropical semiring, we conclude in Appendix B that:
\corr{Every hereditary collection is $R${-representable}, for any
supertropical semiring $R$.}

This paper presents only the preliminary results on boolean and
superboolean representations, to be developed further in the
future.

%******************************* section *********************************
\section{Hereditary collections  and simplicial complexes}

\subsection{Hereditary sets}  Throughout this paper we always assume that the
\textbf{ground set}, denoted by $E$, is a fixed finite set.   We
write $|E|$ for the cardinality of $E$ and $\Pow(E)$ for the
\textbf{power set} of $E$, i.e., the set of all subsets (including
the empty set $\emptyset$) of $E$. In what follows, unless
otherwise is specified, we always assume that $|E| = n$, and thus
have $|\Pow(E)| = 2^n$. Subsets of $E$ of cardinality $k$ are
termed $k$-sets.

We use \cite{murota}, \cite{Oxley03whatis}, and
\cite{oxley:matroid} as general references, especially in regard
to matroid theory (matroids are presented in \S\ref{ssec:matroids}
below).

\begin{definition}\label{def:hereditary}
Let $E$  be a  set  and let $\tH \subseteq \Pow(E)$ be an nonempty
 collection of subsets $J$ of~$E$. The nonempty collection
$\tH$ is called \textbf{hereditary} if every subset $J'$ of any $J
\in \tH$ is also in $\tH$, more precisely: \boxtext{
\begin{enumerate} \eroman
    \item[HT1:] \ $\tH$ is nonempty, \pSkip

    \item[HT2:]  \ $J' \subseteq J$, $J \in \tH \ \imp \ J' \in
    \tH$.
\end{enumerate}}
(Hence, the empty set $\emptyset$ is also in $\tH$.)
The pair $\H := (E,\tH)$, with $\tH$ hereditary over $E$, is
called a \textbf{hereditary collection}.
\end{definition}
\noindent  Hereditary collections are also known in the literature
as \textbf{abstract simplicial complexes}
\cite{QuillenK-Theory,QuillenHomotopy,Spanier}.

The members of the collection $\tH$ are called the
\textbf{independent subsets} of $E$, and therefore the empty set
is considered independent. A subset $J \subseteq E$ which is not
contained in $\tH$ is called \textbf{dependent}. We denote the
collection of dependent subsets of $E$ by $$ \ntH : = \{ X
\subseteq E : X \notin \tH \},$$ i.e., $\ntH = \Pow(E) \sm \tH$.
(Clearly,   $\emptyset \notin \ntH$.)

A maximal  independent subset   (with respect to inclusion) of
$\H$ is called a \textbf{basis} of the hereditary collection $\H$.
The set of all bases of $\H$ is denoted as $\tB(\H) \subseteq \tH$
and termed the \textbf{basis set} of the hereditary collection
$\H$. Clearly, $\tB(\H)$ is canonically defined and by Axiom HT2
determines the hereditary collection $\H$ uniquely. Note that the
family $\H_t := (E,\tH_t)$ of hereditary collections with fixed
ground set $E$ and $\tH_t$ varying
 is in 1:1 correspondence  with the anti-chains of the lattice $(\Pow(E),
 \subseteq)$, cf. ~ \cite{Birkhoff}, given by $\H_t \to \tB(\H_t)$.

 A minimal subset (with respect to inclusion) of the collection
  $\ntH$ of the dependent subsets of $E$ is
called a \textbf{circuit}. We denote the collection of all
circuits of a hereditary collection $\H$ by $\tC(\H)$, i.e.,
$\tC(\H)\subseteq \ntH$. Then, the family $\H_t := (E,\tH_t)$ with
fixed ground set $E$ and $\tH_t$ varying is in 1:1 correspondence
with the anti-chains of the lattice $(\Pow(E),
 \subseteq)$ given now by $\H_t \to \tC(\H_t)$.

The \textbf{rank} $\rnk(\H)$ of a hereditary collection $\H$ is
defined to be the cardinality of the largest member of the basis
set $\tB(\H)$ of $\H$:
$$ \rnk(\H) := \max\{|B| \ :  \  B \in \tB(\H)\}. $$
In particular,  one always has $0 \leq  \rnk(\H)  \leq n$, and
$\rnk(\H) = 0$ iff $\tH = \{ \emptyset \}$.

\begin{example}\label{exp:1.2} Let us start with some elementary  examples of hereditary
collections.
\begin{enumerate} \ealph
    \item $\H = ( E, \{ \emptyset\})$ is a hereditary collection of rank $0$.
    (This is a matroid, to be defined below in Definition \ref{def:matroid}.) \pSkip

    \item $\H = (E, \Pow(E))$ is a hereditary collection (also a matroid) whose  basis set contains
    only the set~$E$, i.e., $\tB(\H) = \{ E \}$, and thus has rank
    $n$. \pSkip

    \item The \textbf{uniform hereditary collection} (also a matroid) $U_{m,n} := (E, \tH_{m,n})$,
    with $0 \leq m  \leq n$, $|E| = n ,$   is defined to
    have the collection of independent subsets
    $$ \tH_{m,n} := \{ X \subseteq E : |X| \leq m \},    $$
    and has rank $m$.

    Notice that the hereditary collections in (a) and (b)
    above can be written in this notation as $U_{0,n} =  ( E, \{ \emptyset\})$ and  $U_{n,n} =  ( E,
    \Pow(E) )$, respectively. We also have $U_{n-1,n} = (E, \Pow(E) \sm \{ E\} )$.

    % ADD
    %Accordingly, any circuit $C\in\tC(\H)$  of a hereditary
    % collection $\H$ with $|C| =m$
    % by itself can be viewed  as the uniform sub-hereditary collection
    % $U_{m-1,m} = (C, \Pow(C) \sm \{ C \})$, restricted to the  ground subset $C \subseteq E$.
    \pSkip

    \item Let $E = \{ 1, 2, 3, 4 \} $ and let  $\H$ be the hereditary collection having the bases  $\{1,2,3 \}$,
    $\{2,3,4 \}$, $\{1,4 \}$. Hence, all the
    2-subsets of $E$  are independent and are members of $\tH$. (This example is not a matroid.)
% One sees that the bases of a hereditary collection need not have
% the same cardinality. \pSkip

    \item Consider the  hereditary collection over the ground set  $E = \{a,b,c,d\}$  with  the bases  $\{a,b\}$,
    $\{b,c\}$,$\{a,c\}$, and $\{b,d\}$,  corresponding to the  edges of the
    diagram
 $$\xymatrix{
    & c      \ar@/^/@{-}[dr]   \ar@/_/@{-}[dl]\\
           a %
            \ar@/_/@{-}[rr]    &&      b
       \ar@/^/@{-}[rr] & & d
 }$$
 (This example is not a matroid.)
\end{enumerate}

\end{example}

The above examples provide some typical cases of hereditary
collections satisfying additional properties, to be discussed
later.

\begin{definition}\label{def:HCIso}  Hereditary collections $\H_1 = (E_1,
\tH_1)$ and $\H_2 = (E_2, \tH_2)$ are said to be
\textbf{isomorphic} if there exits a bijective map $ \varphi : E_1
\to E_2$ that respects dependence; that is $$\varphi(X_1) \in
\tH_2 \ \Leftrightarrow \ X_1 \in \tH_1,$$ for any $X_1 \subseteq
E_1$.
\end{definition}
\begin{definition}\label{def:dirctSum} The \textbf{direct sum} of two hereditary collections $\H_1 = (E_1,
\tH_1)$ and $\H_2 = (E_2, \tH_2)$, with disjoint nonempty ground
sets $E_1$ and $E_2$, is
$$ \H_1 \oplus  \H_2 := (E_1 \cup E_2, \{ J_1 \; \dot \cup \; J_2 : J_1 \in \tH_1, J_2 \in \tH_2
\}).$$
A hereditary collection $\H$ is \textbf{decomposable} if it can be
written as a direct sum  $\H = \H_1 \oplus \cdots \oplus \H_\ell $
of some hereditary collections $\H_i$ with disjoint nonempty
ground sets $E_i$'s, otherwise $\H$ is said to be
\textbf{indecomposable}.
\end{definition}

%******************************* section *********************************
\subsection{Point replacement} We will impose various additional axioms on
hereditary collections.

In what follows, to simplify notation, given a subset  $X
\subseteq E$, and elements $x \in X$ and $p \in E$, we write $X-
x$ and $X + p$ for $X \sm \{ x\}$ and $X \cup \{ p \}$,
respectively; accordingly we write $X-x+p$ for
 $(X\sm \{x\}) \cup \{ p\}$. Abusing the terminology, we sometimes say
that an element $p \in E$ is independent iff $\{ p\}$ is
independent, i.e., $\{ p\} \in \tH$.

\begin{definition}\label{def:ratroid} We say that a hereditary collection $\H = (E,\tH)$ satisfies the \textbf{point replacement
property} iff
 \boxtext{
\begin{enumerate} \eroman
    \item[PR:]  For every  $\{ p\}  \in \tH$ and every nonempty subset $J \in \tH$ there exists
    $x \in J$ such that $J - x  + p \in \tH$.
\end{enumerate}}
\end{definition}

Notice that Examples \ref{exp:1.2}.(a)--(d) satisfy PR, while
example~(e) does not.  One also observes  by example~(d) that PR
does not imply that all the bases have the same cardinality.

\begin{proposition}\label{prop:het} The following are all equivalent for a hereditary collection
$\H$.
\begin{enumerate} \eroman
    \item Point replacement; \pSkip
    \item $\{p\}, X \in \tH$ with $p \notin X \neq \emptyset$
    implies $\exists x \in X$  such that $X - x + p \in
    \tH$; \pSkip
    \item $\{p\}, B \in \tH$  with $B \in \tB(\H)$
    implies $ \exists b \in B$ such that $B - b+p \in
    \tH$; \pSkip

    \item $\{p\}, B \in \tH$ with $B \in \tB(\H)$ and  $p \notin B$
    implies  $ \exists b \in B$  such that $B - b + p \in
    \tH$.

\end{enumerate}
\end{proposition}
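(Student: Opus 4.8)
The plan is to establish the four conditions are equivalent by proving the cycle of implications $(i) \imp (iii) \imp (iv) \imp (ii) \imp (i)$, in which three of the four steps are essentially formal and the only one carrying genuine content is $(iv) \imp (ii)$, where one must pass from the replacement property stated only for bases to the replacement property for arbitrary nonempty independent sets.

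First I would dispose of the three formal steps. For $(i) \imp (iii)$, observe that whenever the hypothesis $\{p\} \in \tH$ of (iii) holds we have $\tH \neq \{\emptyset\}$, so $\emptyset$ is not a maximal independent set and every $B \in \tB(\H)$ is a nonempty member of $\tH$; thus (iii) is merely the instance $J = B$ of (i). The step $(iii) \imp (iv)$ is immediate, since (iv) is just (iii) with the extra hypothesis $p \notin B$ imposed. For $(ii) \imp (i)$ I would split on whether $p \in J$: if $p \notin J$ this is exactly (ii), while if $p \in J$ I pick any $x \in J$ with $x \neq p$ (or $x = p$ when $J = \{p\}$) and note that $J - x + p \subseteq J \in \tH$, so heredity (Axiom HT2) gives $J - x + p \in \tH$ with no further work.

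The heart of the argument is $(iv) \imp (ii)$. Given $\{p\} \in \tH$ and a nonempty $X \in \tH$ with $p \notin X$, I would first extend $X$ to a basis $B \in \tB(\H)$ with $X \subseteq B$. If $p \in B$, then $X + p \subseteq B \in \tH$, so $X + p \in \tH$ by heredity and any $x \in X$ works. If $p \notin B$, then (iv) applies verbatim to $B$ and yields $b \in B$ with $B - b + p \in \tH$. Here the decisive case distinction is whether the swapped-out element $b$ lies in $X$. If $b \in X$, then $X - b + p \subseteq B - b + p \in \tH$, so $X - b + p \in \tH$ by heredity and $x = b$ is the desired element. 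If instead $b \notin X$, then $X \subseteq B - b$, whence $X + p \subseteq B - b + p \in \tH$; heredity again gives $X + p \in \tH$ and any $x \in X$ works.

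I expect the main obstacle to be precisely the possibility $b \notin X$ in the last step: a priori the basis exchange handed to us by (iv) removes an element of $B$ that need not belong to $X$, which seems to break the reduction. The resolution --- that this apparently bad case actually yields the stronger conclusion $X + p \in \tH$, trivializing the replacement --- is the one genuine idea, and everything else is bookkeeping with heredity. I would also record the degenerate case $\rnk(\H) = 0$ (equivalently $\tH = \{\emptyset\}$), in which every hypothesis ``$\{p\} \in \tH$'' is vacuous and all four statements hold trivially.
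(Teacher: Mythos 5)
Your proposal is correct and follows essentially the same route as the paper: the formal implications are handled by the same $x=p$ / heredity observations, and the one substantive step (deducing the statement for an arbitrary nonempty $X\in\tH$ from the basis version) uses exactly the paper's argument of extending $X$ to a basis $B$, applying the basis replacement to get $b$, and splitting on whether $b\in X$, with the case $b\notin X$ giving $X+p\subseteq B-b+p\in\tH$. The only difference is organizational (a single four-step cycle versus the paper's pairwise equivalences $(i)\iff(ii)$, $(iii)\iff(iv)$ plus the bridge $(i)\iff(iii)$), which does not change the substance.
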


\begin{proof} $(i) \iff (ii)$ ($(iii) \iff (iv)$)  since if $p \in X$
($b\in B$) chose  $x =p$ (b=p). \pSkip

$(i) \imp (iii)$ is trivial; so it  suffices  to prove $(iii) \imp
(i)$. Assume $\{p\} \in \tH$ and let $X \neq \emptyset$ be a
member of~$\tH$. Thus, there exists a basis  $B \in \tB(\H)$ that
contains $X$. Then, by $(iii)$,  $ \exists b \in B$, such that $B
-  b +p \in
    \tH$.
    If $b \in X $, then $X
- b + p  \in
    \tH$ and we are done. Otherwise, if $b \notin X$
    then $X + p  \subseteq B - b + p$ which is  a member of  $
    \tH$, and thus,  for any $x \in X$, $X - x + p \in
    \tH$ also lies in $\tH$. This implies $(iii) \Rightarrow (i)$.
\end{proof}

\begin{remark}\label{rmk:1.3c}
Define the \textbf{basis  replacement} condition as follows:
 \boxtext{
\begin{enumerate} \eroman
    \item[BR:]
 If $\{ p\}$ is independent and $B \in \tB(\H)$ then   $\exists b \in
B$ such that  $B - b + p$ is a basis.\label{con:13c}
\end{enumerate}}
By Proposition \ref{prop:het}.(iii), BR implies PR.  However BR is
not equivalent to point replacement; since Example
\ref{exp:1.2}.(d)  fulfills  PR, but does not satisfy~BR. (It
fails for $p=3$ and the basis $B = \{ 1, 4 \}$, since neither
$\{3,4\}$ nor $\{1,3 \}$ is a basis.)
\end{remark}
%******************************* section *********************************
\subsection{Matroids}\label{ssec:matroids}
We now turn to the classical notion of matroids, cf.
\cite{Borovik_onexchange,Oxley03whatis,oxley:matroid}.

\begin{definition}\label{def:matroid}

A  \textbf{matroid} $\M$ is a pair $(E, \tH)$ with $\tH$
hereditary over the ground set $E$
 that  satisfies the following axiom:
 \boxtext{
\begin{enumerate} \eroman
%    \item[M1:]  $\tH$ is nonempty. \pSkip

    \item[MT:] If $I$ and $J$ are in $\tH$ and $|I| = |J| + 1$, then there exists $
    i \in I \sm J$ such that $J +i $ is in $\tH$.
\end{enumerate}}
\end{definition}

%As before,  the elements of $\tH$ are called the independent
%subsets of the ground set $E$, a basis of a matroid $\M := (E,
%\tH)$  is a maximal independent subset of $E$, and $\tB(\M)$
%denotes the collection  of bases of $\M$.
\begin{proposition}\label{1.4.b:prop}
The following properties, cf. \cite{Borovik_onexchange}, are
equivalent for a hereditary collection $\M = (E, \tH)$ to be a
matroid.

\begin{enumerate}\eroman
    \item \textbf{Exchange property (EP)}: $\forall A,B \in \tB(\M)$ and  $\forall a \in A\sm
    B$, $\exists b \in B \sm A $ such that $A - a + b$ is a basis of $\M$, i.e., it is
    an element of $
    \tB(\M)$. \pSkip
    \item \textbf{Dual exchange property (DEP)}: $\forall A,B \in \tB(\M)$ and $\forall a \in A\sm
    B$, $\exists b \in B \sm A $ such that $B -b + a  \in
    \tB(\M)$. \pSkip
    \item \textbf{Symmetric exchange property (SEP)}: $\forall A,B \in \tB(\M)$ and $\forall a \in A\sm
    B$, $\exists b \in B \sm A $ such that $B -b +a \in \tB(\M)$ and  $A -a + b \in
    \tB(\M)$.

\end{enumerate}

\end{proposition}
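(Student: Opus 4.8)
The plan is to prove the equivalence of the three exchange properties (EP), (DEP), and (SEP) for a hereditary collection $\M = (E,\tH)$, and to show each is equivalent to the matroid axiom (MT). The cleanest strategy is a cyclic chain of implications together with the observation that (SEP) trivially implies both (EP) and (DEP), so the real content is establishing (EP) $\imp$ (DEP) (and symmetrically (DEP) $\imp$ (EP)) and then upgrading to (SEP). Throughout I would first record the elementary fact that for a matroid all bases have the same cardinality, since this is needed to make the exchanges well-defined: given $A, B \in \tB(\M)$ and $a \in A \sm B$, the set $A - a$ is independent of size $|A|-1$, and one wants to augment it back to a basis using an element of $B$.

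First I would show (MT) $\imp$ (EP). Given bases $A,B$ and $a \in A \sm B$, apply the matroid augmentation axiom (MT) repeatedly to build up $A - a$ using elements of $B$: since $|B| = |A| > |A - a|$, there is some element of $B \sm (A-a)$ that can be added while staying independent, and the only candidates that keep us inside a basis of the right size must eventually include some $b \in B \sm A$, yielding $A - a + b \in \tB(\M)$. The main subtlety here is ensuring the augmenting element lies in $B \sm A$ rather than coincidentally re-adding $a$; this is handled by the size count, because $a \notin A-a$ and we are filling a single vacancy with a genuinely new element of $B$.

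Next I would prove the symmetric upgrade, which is the heart of the statement: (EP) $\imp$ (SEP). The strategy is, given $A,B$ and $a \in A \sm B$, to produce a single $b \in B \sm A$ for which \emph{both} $A - a + b$ and $B - b + a$ are bases simultaneously. I would consider the symmetric difference $A \triangle B$ and argue by a minimal-counterexample or an induction on $|A \sm B|$: one first uses (EP) to find some $b$ with $A - a + b \in \tB(\M)$, then analyzes whether $B - b + a$ is also a basis; if not, one swaps to a closer pair of bases and reapplies the hypothesis. The hard part will be exactly this step --- coordinating the two exchanges to use the same $b$ --- since (EP) by itself only controls the exchange from $A$'s side and gives no immediate grip on $B - b + a$. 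This is where a careful inductive descent on the size of the symmetric difference, or a rank/closure argument showing that the relevant exchange graph is connected, is indispensable.

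Finally, since (SEP) visibly implies both (EP) and (DEP) (discard the unwanted half of the conjunction), and (DEP) is just (EP) with the roles of $A$ and $B$ interchanged, the equivalences close up into a single loop; I would then observe that any of these basis-level exchange properties implies (MT) by a standard augmentation argument, extending independent sets $I, J$ with $|I| = |J|+1$ to bases and transferring the exchange down to the independent sets. I expect the matroid axiom (MT) itself to be equivalent to all three, so the proposition is really the assertion that these are four equivalent formulations; the genuinely nontrivial implication, and the one I would spend the most care on, is the passage from a one-sided exchange to the symmetric exchange (SEP).
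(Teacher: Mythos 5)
The paper does not actually prove this proposition; it defers to the literature (Oxley; Borovik--Gelfand--White), so your sketch must be judged against the standard arguments rather than against anything in the text. Your overall architecture --- MT $\imp$ EP, the symmetric property SEP as the hard core, SEP trivially implying EP and DEP, and each exchange property implying MT --- has the right shape, and your MT $\imp$ EP step is essentially correct, though it needs only a single application of MT to $I=B$, $J=A-a$ (using equicardinality of bases), not a repeated one: the resulting $b$ lies in $B\sm A$ automatically because $a\notin B$.

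There are, however, two genuine problems. First, your claim that ``DEP is just EP with the roles of $A$ and $B$ interchanged'' is false: in both EP and DEP the universally quantified element is $a\in A\sm B$ and the existentially quantified one is $b\in B\sm A$; swapping $A$ and $B$ in EP universally quantifies the element being \emph{removed} from $B$, whereas DEP universally quantifies the element being \emph{added} to $B$. The correct relation is that DEP for $\tB(\M)$ is EP for the complementary family $\{E\sm B : B\in\tB(\M)\}$, i.e.\ for the dual, and the equivalence EP $\iff$ DEP is normally obtained through duality, not by renaming. Second, the two implications carrying all the actual content are left as gestures. EP $\imp$ MT is the basis-axiom theorem: it first requires deriving from EP alone that all bases are equicardinal (minimize $|A\sm B|$ over pairs of bases of unequal size), and then a careful choice-of-bases argument to recover the augmentation axiom; ``transferring the exchange down to the independent sets'' does not describe that argument. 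And for matroid $\imp$ SEP, an induction on $|A\sm B|$ coordinating a single $b$ for both exchanges is not a strategy known to close up; the standard proof identifies $\{b : B-b+a\in\tB(\M)\}$ with the fundamental circuit of $a$ relative to $B$, identifies $\{b : A-a+b\in\tB(\M)\}$ with the fundamental cocircuit of $a$ relative to $A$, and then invokes the fact that a circuit and a cocircuit cannot meet in exactly one element. Without some such mechanism (circuits, rank, or duality), the central step of your plan has no engine.
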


The proof of these equivalences, as well as the next lemma, are
standard in matroid theory, see \cite{oxley:matroid} and
\cite{Borovik_onexchange}.

\begin{lemma}[{\cite[Lemma 1.2.4]{oxley:matroid}}]\label{1.4.b:lem}
In a matroid $\M$ all the bases are have the same cardinality,
which is then equal  the rank of $\M$.

\end{lemma}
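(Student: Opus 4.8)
The plan is to prove Lemma \ref{1.4.b:lem}: in a matroid $\M = (E,\tH)$ every basis has the same cardinality. I will argue directly from the matroid augmentation axiom MT, using the characterization of bases as maximal independent sets. The heart of the matter is that MT forbids one basis from being strictly smaller than another, and this is exactly what I must exploit.

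First I would set up the contradiction. Suppose, for contradiction, that $B_1, B_2 \in \tB(\M)$ are two bases with $|B_1| < |B_2|$; among all such pairs of unequal-size bases, I may as well argue with any one. Since $B_1, B_2 \in \tH$ and $|B_2| > |B_1|$, I can form an intermediate independent set of size $|B_1|+1$ if needed, but the cleanest route is to apply MT directly: taking $I = B_2$ and $J = B_1$ (after possibly passing to a subset of $B_2$ of cardinality exactly $|B_1|+1$, which is still in $\tH$ by HT2), axiom MT with $|I| = |J|+1$ guarantees an element $i \in I \sm J$ with $J + i \in \tH$.

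The key step, and where the real content lies, is the conclusion that $B_1 + i \in \tH$ with $i \notin B_1$ contradicts the maximality of $B_1$ as a basis. Indeed, $B_1$ is a maximal independent set by definition of $\tB(\M)$, so no proper superset of $B_1$ can be independent; but $B_1 + i$ is a strictly larger independent set, a contradiction. Hence no two bases can have different cardinalities, so all bases share a common size. The final sentence of the statement then follows immediately from the definition $\rnk(\H) := \max\{|B| : B \in \tB(\H)\}$: if every basis has the same cardinality, that common value is the maximum, hence equals $\rnk(\M)$.

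I expect the only subtlety, rather than a genuine obstacle, to be the bookkeeping in reducing to the case $|I| = |J| + 1$ required by MT: one must justify that a subset of $B_2$ of size exactly $|B_1|+1$ exists (immediate since $|B_2| > |B_1|$) and is independent (immediate by HT2). Everything else is a direct appeal to MT together with maximality of bases, so no deep machinery is needed; the lemma is genuinely the foundational fact that MT was designed to encode.
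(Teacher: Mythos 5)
Your proof is correct and is exactly the standard augmentation argument: pass to an independent subset of the larger basis of size $|B_1|+1$ (valid by HT2), apply MT to augment $B_1$, and contradict its maximality. The paper gives no proof of its own here, deferring to \cite[Lemma 1.2.4]{oxley:matroid}, and the argument you supply is precisely the one found there.
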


\begin{example} Consider the hereditary collection  $\H$ of Example \ref{exp:1.2}.(d).
The $2$-subset $\{ 1, 4\}$ is maximal in $\tH$ with respect to
inclusion, and thus is a basis of $\H$. Therefore,  since $\H$ has
rank $3$, $\H$ is not a matroid (recall that it does not satisfy
BR) but it satisfies PR.

\end{example}

\begin{proposition}\label{1.4.c:prop}
Any matroid satisfies the point replacement property PR (cf.
Definition \ref{def:ratroid}).
\end{proposition}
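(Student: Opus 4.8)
The plan is to derive the point replacement property PR straight from the matroid augmentation axiom MT, after first reducing to a convenient case. Given $\{p\} \in \tH$ and a nonempty $J \in \tH$, the case $p \in J$ is immediate: taking $x = p$ gives $J - x + p = J \in \tH$. So I may assume $p \notin J$, and the task becomes to find an $x \in J$ with $J - x + p \in \tH$; observe that any such set has cardinality exactly $|J|$.

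The central idea is to manufacture, by iterated augmentation, an independent set $S \subseteq J \cup \{p\}$ with $p \in S$ and $|S| = |J|$. Once such an $S$ is in hand the proof essentially finishes: since $|J \cup \{p\}| = |J| + 1$ and $p \in S$, the set $S$ omits exactly one element $x$, and that $x$ must lie in $J$; hence $S = J - x + p \in \tH$, as required.

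To build $S$ I would start from $S_1 := \{p\}$ and enlarge it one element at a time, keeping the invariant that $S_k$ is independent, contains $p$, is contained in $J \cup \{p\}$, and has $|S_k| = k$. At a stage with $k < |J|$, pick any $(k+1)$-element subset $I \subseteq J$, which is independent by Axiom HT2; then MT applied to the pair $I, S_k$ (valid because $|I| = |S_k| + 1$) produces an element $i \in I \setminus S_k$ with $S_k + i \in \tH$. Since $I \subseteq J$, this $i$ lies in $J$, so $S_{k+1} := S_k + i$ again satisfies the invariant. Iterating until $k = |J|$ yields the desired $S$.

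The one point needing care --- the main obstacle --- is guaranteeing that every augmenting element $i$ returned by MT belongs to $J$, so that the terminal set $S$ retains $p$ and drops exactly one point of $J$; this is secured by always drawing the larger set $I$ from $J$, which is possible precisely because $|J| \ge k+1$ at each step. An alternative, less self-contained route is to invoke the established equivalences: by Proposition \ref{prop:het} it suffices to verify condition $(iv)$, and then, extending $\{p\}$ to a basis $B'$ and applying the dual exchange property (Proposition \ref{1.4.b:prop}.(ii)) to $B'$ and any basis $B$ with $p \notin B$, one obtains $b \in B$ with $B - b + p \in \tB(\M) \subseteq \tH$.
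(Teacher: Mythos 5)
Your main argument is correct, and it takes a genuinely different route from the paper. The paper proves PR by passing through the equivalent reformulations: it invokes the dual exchange property (Proposition \ref{1.4.b:prop}.(ii), whose equivalence with Axiom MT is cited as standard rather than proved in the paper), picks a basis $A$ containing $p$, and applies DEP with $a = p$ to verify condition (iv) of Proposition \ref{prop:het}. Your primary argument instead works directly from Axiom MT: after the easy reduction to $p \notin J$, you grow an independent set $S_k \ni p$ inside $J \cup \{p\}$ one element at a time, at each step feeding MT a $(k+1)$-subset $I$ of $J$ so that the augmenting element is forced to come from $J$; the terminal set $S$ has cardinality $|J|$, contains $p$, and hence equals $J - x + p$ for the unique omitted $x \in J$. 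This is more self-contained than the paper's proof, since it uses only the definition of a matroid and Axiom HT2, and does not rely on the unproved equivalence MT $\iff$ DEP nor on the reduction machinery of Proposition \ref{prop:het}; what it costs is a short induction in place of a one-line appeal to DEP. Your closing ``alternative route'' is essentially the paper's own proof, with the cosmetic difference that you extend $\{p\}$ to a basis $B'$ explicitly where the paper simply picks a basis $A$ containing $p$. All steps check out, including the edge case $|J|=1$, where the iteration is empty and $S = \{p\}$ already works.
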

\begin{proof} We  assume the dual exchange property, cf.
Proposition~ \ref{1.4.b:prop}.(ii),  and the hypothesis of
Proposition~ \ref{prop:het}.(iv). Then we need to prove that for a
given basis $B \in \tB(\M)$ and a  point $p \in E$ there is an
element $b \in B$ such that $B - b + p$ is independent.

Pick a basis $A \in \tB(\M)$ containing $p$, set  $a = p \in A \sm
B $, and apply the dual exchange property, yielding that there is
$b \in B$ so that $B - b + p $ is a basis, hence independent.
\end{proof}

\begin{example} Consider the elementary examples of Example \ref{exp:1.2}.
\begin{enumerate} \ealph
    \item Example \ref{exp:1.2}.(d) satisfies PR is of rank $3$, and is not
a matroid. \pSkip

    \item
 Example \ref{exp:1.2}.(e) does not satisfy PR and thus
is not a matroid. (Take the element $d$ with respect to the
2-subset $\{a,c \}$.)
\end{enumerate}

\end{example}

\begin{proposition}  Let $\H = (E,\tH)$ be a hereditary collection of rank $2$.  Then $\H$
satisfies PR iff $\H$ is a matroid.
\end{proposition}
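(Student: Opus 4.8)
The plan is to lean on the fact that one implication is already available in full generality: Proposition \ref{1.4.c:prop} gives matroid $\imp$ PR without any rank hypothesis. So the genuine content is the converse, PR $\imp$ matroid, under the assumption $\rnk(\H) = 2$. I expect both directions to come out together by showing that, for rank-$2$ collections, the matroid axiom MT and the point-replacement axiom PR collapse to one and the same elementary condition.

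First I would record the structure of a rank-$2$ hereditary collection $\H = (E,\tH)$. By HT2 together with the rank bound, every member of $\tH$ has cardinality at most $2$, so $\tH$ consists exactly of $\emptyset$, the independent singletons $\{p\}$, and the independent $2$-sets; moreover every independent $2$-set $\{a,b\}$ forces $\{a\},\{b\} \in \tH$. Thus it is convenient to think of the independent $2$-sets as the edges of a graph on the independent points.

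Next I would reduce each axiom to its single nontrivial instance. Since $\tH$ has no member of cardinality $> 2$, the only instances of MT with content have $|J| \in \{0,1\}$: for $|J| = 0$ the conclusion holds because $I = \{i\} \in \tH$, and for $J = \{j\}$, $I = \{a,b\}$ it is automatic when $j \in I$ (take the other element) and otherwise requires $\{j,a\} \in \tH$ or $\{j,b\} \in \tH$. In parallel I would reduce PR: for $\{p\} \in \tH$ and nonempty $J \in \tH$, the case $|J| = 1$ holds since $\{p\} \in \tH$, and for $J = \{a,b\}$ it is automatic when $p \in J$, while for $p \notin J$ it demands precisely $\{p,a\} \in \tH$ or $\{p,b\} \in \tH$.

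Comparing the two reductions, both MT and PR amount to the single requirement that for every independent point $p$ and every edge $\{a,b\}$ with $p \notin \{a,b\}$, at least one of $\{p,a\}, \{p,b\}$ lies in $\tH$. Hence the two axioms are equivalent for rank-$2$ collections, which yields the proposition. I do not anticipate a real obstacle here; the only care needed is to keep the size-based case split exhaustive and to dispatch the degenerate cases --- where the replacing/added point already lies in the set, or where the smaller set is empty --- as automatically (or vacuously) satisfied.
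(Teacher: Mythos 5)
Your proof is correct, but it takes a genuinely different route from the paper's. The paper proves the forward direction by first using PR to show that every independent $2$-set is a basis (so all bases have cardinality $2$), and then verifying the dual exchange property of Proposition \ref{1.4.b:prop}(ii), which characterizes matroids; the reverse direction is delegated to Proposition \ref{1.4.c:prop}. You instead work directly with the axiom MT: since rank $2$ forces every member of $\tH$ to have at most two elements, the only nonvacuous instances of MT and of PR are those involving an independent singleton $\{p\}$ and an independent $2$-set $\{a,b\}$ with $p \notin \{a,b\}$, and in both cases the requirement is exactly that $\{p,a\} \in \tH$ or $\{p,b\} \in \tH$; your case analysis is exhaustive and the degenerate cases are handled correctly. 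This gives both implications simultaneously, is self-contained (it does not invoke the equivalence of MT with the exchange properties, which the paper cites as standard matroid theory), and makes transparent exactly why the equivalence is special to rank $2$ --- at rank $3$ the two axioms acquire genuinely different nontrivial instances, consistent with Example \ref{exp:1.2}.(d). What the paper's route buys is the intermediate fact that all bases of a rank-$2$ collection satisfying PR have the same cardinality, which connects to the surrounding discussion of the BR condition and of duality.
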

(Note that Example \ref{exp:1.2}.(e) shows that the proposition
fails for rank $ \geq 3$.)
\begin{proof}
$(\Rightarrow)$ Assuming that $\H$ is of rank $2$ and satisfies
PR, we show that $\H$ satisfies the dual exchange property.

First, we claim that all bases of $\H$ have cardinality $2$.
Indeed, pick $\{ p \} \in \tH$ and take $X \in \tH$ with $|X| =
2$, which exists by assumption. Suppose $X = \{ x_1, x_2 \} $.  If
$p \in X$ we are done. Otherwise, $p \notin X$ and the set $\{p,
x_1\}$ or $\{p, x_2\}$ is independent by PR, and thus is a basis
of $\H$  by maximality.

We next need to verify that the dual exchange propriety
(Proposition \ref{1.4.b:prop}.(ii)) is satisfied. Let $A = \{ a_1,
a_2\}$ and $B = \{ b_1, b_2\}$, where $A \neq B$ are two bases of
$\H$. If $A \cap B = \emptyset $ then PR implies the dual exchange
property for $A$ and $B$. On the other hand, if $A \cap B \neq
\emptyset$, say  $A = \{a,c \}$ and $B = \{b,c \}$, then the dual
exchange property is trivial by taking $a = b$.

$(\Leftarrow)$ By Proposition \ref{1.4.c:prop}.
\end{proof}
The answer to the next question is known to be false.

\begin{question*}\label{q:14.b}
Is the BR condition  equivalent to the MT axiom?
\end{question*}
MT implies  BR, with an easy proof similar to that of Proposition
\ref{1.4.c:prop}. The next section shows that the converse is
false.  (We know by Example \ref{exp:1.2}.(d)  that BR is a
stronger condition than PR.)

In the next couple of sections we consider  operations on
hereditary collections, resulting in new hereditary collections.
These operations  are standard for the case of matroids but are
somewhat less obvious for hereditary collections.

\subsection{Duality}

\begin{definition}\label{def:dual}  We define the \textbf{dual hereditary collection}
 $\H^* $ of a hereditary collection $\H = (E,
\tH)$ in terms of its bases as:
$$ \H^* := (E, \tH^*), \qquad B^* := E \sm B \in \tB(\H^*) \ \iff \  B \in \tB(\H).$$
 \end{definition}
Clearly, we  have $(\H^*)^* = \H$, for any hereditary collection
$\H$.

\begin{example}\label{1.5.b:exp} Consider Example \ref{exp:1.2}.(d), which satisfies
PR but is not a matroid. The bases of the dual hereditary
collection $\H^*$ are  $\{ 4 \}$, $\{ 1 \}$,  and $\{2,3 \}$ taken
over the same ground set  $E = \{1,2,3,4 \}$. Thus $\H^*$ which
does not satisfy PR, and therefore point replacement is not
preserved under duality.
\end{example}

\begin{proposition}[{\cite[Theorem 2.1.1]{oxley:matroid}}]\label{1.5.c:prop}
The dual of a matroid is a matroid.
\end{proposition}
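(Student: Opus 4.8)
The plan is to prove that $\M^*$ is a matroid by verifying the symmetric exchange property (SEP) of Proposition~\ref{1.4.b:prop}.(iii), exploiting the fact that SEP is manifestly self-dual under complementation. First I would record two preliminary facts about $\M^* = (E, \tH^*)$. Since $\M$ is a matroid, Lemma~\ref{1.4.b:lem} gives that every basis $B \in \tB(\M)$ has the same cardinality $r = \rnk(\M)$; hence every set in $\tB(\M^*) = \{ E \sm B : B \in \tB(\M)\}$ has cardinality $n - r$. Because complementation reverses inclusion, the image of the antichain $\tB(\M)$ is again an antichain, so $\tB(\M^*)$ is a legitimate basis set and $\M^*$ is genuinely a hereditary collection (as already guaranteed by Definition~\ref{def:dual}), all of whose bases have equal size.

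It then suffices, by Proposition~\ref{1.4.b:prop}, to verify SEP for $\tB(\M^*)$. The key book-keeping identities are $A^* \sm B^* = B \sm A$ and $B^* \sm A^* = A \sm B$, where $A^* = E \sm A$ and $B^* = E \sm B$ for $A, B \in \tB(\M)$. Thus an element $a^* \in A^* \sm B^*$ is exactly an element $b \in B \sm A$. Applying SEP in $\M$ itself, with the roles of $A$ and $B$ interchanged (which is permitted since SEP quantifies symmetrically over ordered pairs of bases), yields some $a \in A \sm B$ with $A - a + b \in \tB(\M)$ and $B - b + a \in \tB(\M)$.

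To close the argument I would set $b^* := a \in A \sm B = B^* \sm A^*$ and check, by a short complement computation, that $A^* - a^* + b^* = E \sm (A - a + b)$ and $B^* - b^* + a^* = E \sm (B - b + a)$. By the defining equivalence of the dual in Definition~\ref{def:dual}, both of these complements lie in $\tB(\M^*)$, which is precisely the conclusion of SEP for the pair $A^*, B^*$ and the chosen $a^*$. Hence $\M^*$ satisfies SEP and is therefore a matroid.

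I expect the only real content to be the recognition that SEP is invariant under complementation; once that is seen, the rest is bookkeeping. The main point to be careful about is the equicardinality step: the exchange properties of Proposition~\ref{1.4.b:prop} are stated purely in terms of the basis set, so I must make sure (via Lemma~\ref{1.4.b:lem}) that $\tB(\M^*)$ really consists of the maximal independent sets of $\M^*$ and that they all have the same cardinality, otherwise those properties would not apply cleanly. The membership checks needed to justify each deletion and insertion, such as $a \in E \sm B$ and $b \notin E \sm B$, are routine and follow immediately from $a \in A \sm B$ and $b \in B \sm A$.
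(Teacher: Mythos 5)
Your proof is correct and takes essentially the same route as the paper, whose entire proof is the observation that under complementation the exchange property for $\M^*$ becomes the dual exchange property for $\M$ (Proposition~\ref{1.4.b:prop}.(i) and (ii)). Your use of the self-dual SEP in place of the EP/DEP pair, together with the explicit bookkeeping $A^* \sm B^* = B \sm A$ and the equicardinality check, is just a more detailed packaging of that same observation.
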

\begin{proof} Immediate by  the exchange property and the dual exchange
property,  Proposition~\ref{1.4.b:prop}.(i) and
Proposition~\ref{1.4.b:prop}.(ii), respectively.
\end{proof}

\begin{example}\label{exp:1.5.d}
This is an example from James Oxley (private  communication)
showing that a hereditary collection $\H$ and its dual $\H^*$ can
both  satisfy PR (also BR) without $\H$ being a matroid.

Let $E = \{1,\dots,6 \}$ and consider the hereditary collection
$\H = (E,\tH)$ whose bases  are all 3-subsets of~$E$ except the
3-subsets  $A = \{1,2,3 \}$ and $B= \{1,3,4 \}$. These are not
bases of a matroid, since $A$ and $B$ are circuits, so $\{1,2,4
\}$ must contain a circuit which it does not. (See \cite[Chapter
1]{oxley:matroid} the weak circuit elimination axiom.) On the
other hand, BR is true for $\H$, since every 4-subset contains at
least two bases.

Similarly,   BR is true for $\H^*$ since every 2-subset is
contained in at least two bases of $\H$.
\end{example}

\begin{proposition}\label{1.5.e:prop} Condition BR for $\H$ is
implied  by PR together with the assumption that all the bases
have the same cardinality~$k$.
\end{proposition}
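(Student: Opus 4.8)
The plan is to deduce BR directly from PR by using the equal-cardinality hypothesis to upgrade the independent set that PR produces into an actual basis. So I would fix an independent point $\{p\} \in \tH$ and a basis $B \in \tB(\H)$; since all bases have cardinality $k$ we have $|B| = k$, and we may assume $k \geq 1$ (when $k = 0$ there are no independent singletons and BR is vacuous). The goal is to produce $b \in B$ with $B - b + p \in \tB(\H)$.

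First I would dispose of the case $p \in B$: here the choice $b = p$ gives $B - b + p = B$, which is already a basis, exactly the reduction used at the start of the proof of Proposition \ref{prop:het}. So from now on assume $p \notin B$. Next I apply PR to $\{p\}$ and the nonempty set $J = B$: this yields an $x \in B$ with $B - x + p \in \tH$. Because $p \notin B$, deleting $x$ and adjoining the genuinely new element $p$ leaves the cardinality unchanged, so $|B - x + p| = |B| = k$. It therefore remains only to show that an independent set of cardinality $k$ is necessarily a basis; granting this, $b = x$ witnesses BR.

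The one point needing care — and the only real content beyond bookkeeping — is the claim that every independent set of cardinality $k$ is maximal, i.e.\ lies in $\tB(\H)$. Since $E$ is finite, any independent set extends (by repeatedly adjoining elements while remaining in $\tH$) to a maximal independent set, hence to some basis $B' \in \tB(\H)$; by hypothesis $|B'| = k$. Applying this to $I := B - x + p$ gives $I \subseteq B'$ with $|I| = k = |B'|$, which forces $I = B'$, so $I$ is itself a basis. This closes the argument.

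I do not expect a serious obstacle here: the proof is essentially a cardinality count once PR is invoked. The subtlety worth flagging is that the equal-cardinality assumption is exactly what prevents the PR-output $B - x + p$ from being a strictly \emph{smaller} maximal independent set. This is what goes wrong in Example \ref{exp:1.2}.(d), where PR holds but BR fails (cf.\ Remark \ref{rmk:1.3c}) precisely because that rank-$3$ collection has a basis of size $2$; so the hypothesis cannot be dropped.
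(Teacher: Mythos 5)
Your proof is correct and follows essentially the same route as the paper's: apply PR (in the form of Proposition \ref{prop:het}.(iii)) to obtain an independent set $B-x+p$ of cardinality $k$, and then use the equal-cardinality hypothesis to conclude that any independent $k$-set is already a basis. The paper states this in one line; your version merely spells out the extension-to-a-basis argument and the trivial case $p\in B$.
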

\begin{proof}
This is true since from Proposition \ref{prop:het}.(iii) if all
the bases have cardinality $k$, then any independent subset with
$k$ elements is a basis.
\end{proof}

\begin{question}\label{q:14.f} Is the converse of Proposition
\ref{1.5.e:prop} true?
\end{question}
\begin{remark}\label{rmk:1.5.g} $ $
\begin{enumerate}
    \item
When all the bases of a hereditary collection $\H$ have the same
cardinality $k$ (e.g., when  $\H$ is a matroid), then all the
bases of $\H^*$ are of cardinality $|E|-k$. \pSkip

\item If $\H$ satisfies PR and has rank 2, then by Proposition
\ref{1.5.c:prop} $\H^*$ is a matroid of rank $|E|-2$.

\end{enumerate}
\end{remark}

Given  a hereditary collection  $\H = = (E, \tH)$ of rank $2$ we
associate to $\H$ the graph $G := (\gV,\gE,)$ with  vertex set
$\gV = W$,  such that the bases of~ $\H$ are 2-subsets
corresponding to the edges of $G$. See Example \ref{exp:1.2}.(e).
The circuits of~$\H$ are 2-subsets corresponding to the missing
edges of $G$, and all subsets of $\gE$ that give complete
subgraphs on $3$ vertices.

%the 3-subsets of $E$ that correspond to subgraphs of $G$ that
%introduce a cycle of 3-edge in $G$; this is the clique $K_3$,
%i.e., the complete graph on 3 vertices.

\subsection{Deletion, contraction, and minors  }

\begin{definition}\label{def:deletion}
The \textbf{deletion} of a subset $X \subseteq E$ from a
hereditary collection  $\H = (E, \tH)$ is defined as
$$ \H \sm X := (E \sm X, \tH \sm X),$$
where $\tH \sm X := \{ Y \in \tH : Y \subseteq E \sm X  \}$.

The \textbf{contraction}, denoted  $\H / X$,  of $X$ is defined as
$(E \sm X, \tH / X)$, where $\tH / X$ is given  by:
$$ \begin{array}{ccc}
Y \in \tH / X  & \iff &  Y \cup B_X \in \tH \text{ for some $\H$
maximal independent subset }  B_X  \text { of } X.
   \end{array}
$$
\end{definition}

One sees that the empty set is contained in $\H / X$ and the
contraction of any  basis $B \in \tB(\H)$ gives the hereditary
collection $(E \sm B, \{ \emptyset\})$; while on the other hand
$\H / \emptyset = \H$.

\begin{remark}\label{1.6.c:rmk} $ $

\begin{enumerate} \eroman
    \item
In many applications the subset $X$ is assumed to be  independent
in $\H$, i.e., $X \in \tH$, so in this case $Y \in \tH / X$ if $X
\cup Y$ is independent in $\H$. \pSkip

    \item The definition of contraction does \textbf{not} satisfy $\H/ X = (\H^* \sm X)^*$
    for hereditary collections  in general, but does for matroids. \pSkip

    \item For disjoint  subsets $X$ and $Y$ of $E$, one has $$\H \ / \  Y \sm X = \H \sm X \ / \  Y,$$ as is easy to verify.
\end{enumerate}
\end{remark}
\begin{definition}\label{def:minor}
A \textbf{minor} $\H' \subseteq \H$ of a hereditary collection $\H
= (E, \tH)$ is a hereditary collection which is obtained from $\H$
by a sequence of deletions and contractions, which  is equivalent
to  $\H'$ being $$\H' = \H / X \sm Y = \H \sm Y / X$$ for some
disjoint subsets $X,Y \subseteq E$, where $\H = \H \sm \emptyset =
\H / \emptyset.$

A minor $\H' $ of $\H$ is said to be a \textbf{proper minor} if
$\H' \neq \H$.
\end{definition}

\begin{proposition}\label{1.6.c:prop} Given a hereditary collection $\H = (E, \tH)$.
Let $Y \subseteq  E$ be a subset of $E$, then for any $X \subseteq
E \sm Y$:
\begin{enumerate} \eroman
\item $X$ is independent in $\H \sm  Y$ iff $X$ is independent in
$\H$. \pSkip

\item $X$ is a circuit of $\H \sm  Y$ iff $X$ is a circuit of
$\H$. \pSkip

\item $X$ is a basis  of $\H \sm  Y$ iff $X$ is a maximal subset
of $E \sm Y$ that is independent in $\H$. \pSkip

\item $X$ is independent in $\H /  Y$ iff $X \cup B_Y $ is independent in
$\H$ for some subset $B_Y$ of $Y$ that is independent in $\H$.
\pSkip

%\item $X$ is circuit  of $\H /  Y$ iff $X$ is a minimal nonempty member of
% $\{ C - Y  : C \text{ is a circuit of } \H  \}$. \pSkip

\item $X$ is a basis of  $\H /  Y$ iff $X \cup B_Y$ is a basis of
$\H$ for some maximal subset $B_Y$ of $Y$ that is independent in
$\H$. \pSkip
\end{enumerate}
\end{proposition}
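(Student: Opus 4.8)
The plan is to separate the five assertions into the two that are governed by the definition of deletion, namely (i)--(iii), and the two governed by the definition of contraction, namely (iv)--(v), and in each group to reduce everything to the defining set-theoretic formulas of Definition~\ref{def:deletion}.

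For (i) I would argue directly from $\tH \sm Y = \{ Z \in \tH : Z \subseteq E \sm Y \}$: since the hypothesis already supplies $X \subseteq E \sm Y$, the conditions $X \in \tH \sm Y$ and $X \in \tH$ are literally the same, which is the entire content. Part (ii) then follows by combining (i) with the fact that a circuit is a minimal dependent set. Negating (i) shows that a subset of $E \sm Y$ is dependent in $\H \sm Y$ exactly when it is dependent in $\H$; and since every proper subset of such an $X$ again lies inside $E \sm Y$, minimality transfers in both directions, so that the circuits of $\H \sm Y$ are exactly the circuits of $\H$ contained in $E \sm Y$. Part (iii) is the same observation applied to maximality rather than minimality: by (i) the members of $\tH \sm Y$ are precisely the members of $\tH$ lying inside $E \sm Y$, hence the $\subseteq$-maximal ones, which are the bases of $\H \sm Y$, are exactly the subsets of $E \sm Y$ that are maximal independent in $\H$.

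For the contraction I read ``independent in $\H / Y$'' as ``$X \in \tH / Y$'' and ``$B_Y$'' as a maximal independent subset of $Y$, as in Definition~\ref{def:deletion}. With this reading, (iv) is a direct transcription of the defining formula $X \in \tH / Y \Leftrightarrow X \cup B_Y \in \tH$ for some maximal independent $B_Y \subseteq Y$. For the forward implication of (v) I would argue by contradiction. Suppose $X$ is a basis of $\H / Y$; choose a maximal independent $B_Y \subseteq Y$ witnessing $X \in \tH / Y$, so $X \cup B_Y \in \tH$, and assume $X \cup B_Y$ is not a basis of $\H$, say $X \cup B_Y + e \in \tH$ with $e \notin X \cup B_Y$. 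If $e \in E \sm Y$, then $(X + e) \cup B_Y \in \tH$ forces $X + e \in \tH / Y$, contradicting the maximality of $X$ in $\H / Y$; if instead $e \in Y$, then heredity yields $B_Y + e \in \tH$ with $B_Y + e \subseteq Y$, contradicting the maximality of $B_Y$ among independent subsets of $Y$. Hence $X \cup B_Y$ is a basis of $\H$, as required.

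The step I expect to be the genuine obstacle is the reverse implication of (v): from the assumption that $X \cup B_Y$ is a basis of $\H$ for some maximal independent $B_Y \subseteq Y$ one must deduce that $X$ is not merely a member of $\tH / Y$ but a $\subseteq$-maximal one. The difficulty is that a putative enlargement $X + e \in \tH / Y$ may be witnessed by a \emph{different} maximal independent subset $B_Y' \subseteq Y$, so the $B_Y$ supplied by the hypothesis cannot simply be reused, and one is driven to compare the various maximal independent subsets of $Y$ against one another. This comparison is exactly where the bare hereditary axioms appear too weak; the natural tool to close it is the matroid structure, under which the exchange property and the equicardinality of bases (Proposition~\ref{1.4.b:prop} and Lemma~\ref{1.4.b:lem}) render all maximal independent subsets of $Y$ mutually interchangeable. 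I would therefore isolate this maximality claim as the crux of the argument and check carefully what, beyond heredity, it genuinely requires.
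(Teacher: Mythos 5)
Your handling of (i)--(iii) is correct and is all the content there is: the paper's own proof of this proposition is the single word ``Straightforward,'' so there is nothing more detailed to compare against. For (iv) you are right to read $B_Y$ as a \emph{maximal} independent subset of $Y$; note that the literal wording of (iv) (``for some subset $B_Y$ of $Y$ that is independent'') is already too weak, since one could take $B_Y=\emptyset$ and the right-hand side would then assert only $X\in\tH$, which does not imply $X\in\tH/Y$ (take $E=\{1,2\}$, $Y=\{1\}$, $\tH$ with bases $\{1\}$ and $\{2\}$, $X=\{2\}$). Under your maximal reading (iv) is the definition verbatim, and your forward argument for (v) is complete and correct.

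Your suspicion about the reverse implication of (v) is not merely a gap in your argument --- the implication is genuinely false for hereditary collections, and the paper's ``Straightforward'' conceals this. Take $E=\{1,2,3\}$ and $\tH=\{\emptyset,\{1\},\{2\},\{3\},\{2,3\}\}$, so that $\tB(\H)=\{\{1\},\{2,3\}\}$, and let $Y=\{1,2\}$. The maximal independent subsets of $Y$ are $\{1\}$ and $\{2\}$, and one computes $\tH/Y=\{\emptyset,\{3\}\}$, whose unique basis is $\{3\}$. Now $X=\emptyset$ together with $B_Y=\{1\}$ satisfies the right-hand side of (v), since $X\cup B_Y=\{1\}$ is a basis of $\H$, yet $X$ is not a basis of $\H/Y$. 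The failure is exactly the one you diagnosed: an enlargement $X+e\in\tH/Y$ may be witnessed by a different maximal independent subset of $Y$, and nothing in the hereditary axioms ties the two witnesses together. For matroids the implication is rescued by equicardinality: all maximal independent subsets of $Y$ have cardinality $\rnk(Y)$ and all bases of $\M$ have cardinality $\rnk(\M)$, so $X\cup B_Y\in\tB(\M)$ forces $|X|=\rnk(\M)-\rnk(Y)$, which is the common cardinality of the bases of $\M/Y$, whence the independent set $X$ is maximal in $\tH/Y$. So you should record (v) as correct only under a matroid (or at least equicardinality) hypothesis, or else weaken its ``if'' direction; as stated for arbitrary hereditary collections it does not hold.
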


\begin{proof}
Straightforward.%\mnote{To porve}
\end{proof}
\begin{proposition}\label{1.6.f:prop} $ $
\begin{enumerate} \eroman
    \item Matroids are closed under taking  duals, deletions and contractions,
    and hence under minors. \pSkip

    \item  The class of hereditary collections  satisfying  PR is closed under
    deletion, but is not closed under contractions and duals, and hence not
    under minors. \pSkip

    \item A hereditary collection $\H$ is a matroid iff all minors of $\H$ satisfy PR.
\end{enumerate}
\end{proposition}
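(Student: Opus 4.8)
The plan is to dispatch (i) and (ii) with the deletion/contraction calculus of Proposition~\ref{1.6.c:prop} together with the duality of Proposition~\ref{1.5.c:prop}, and to concentrate the real effort on the converse half of (iii). For (i), closure under duals is precisely Proposition~\ref{1.5.c:prop}. For deletion I would verify MT directly: given $I,J\in\tH\sm Y$ with $|I|=|J|+1$, Proposition~\ref{1.6.c:prop}.(i) makes both independent in $\M$, MT in $\M$ yields $i\in I\sm J$ with $J+i\in\tH$, and $i\in I\subseteq E\sm Y$ keeps the witness inside $\tH\sm Y$. For contraction I would not recompute an exchange axiom but instead use the matroid identity $\M/X=(\M^*\sm X)^*$ from Remark~\ref{1.6.c:rmk}.(ii): contraction is then dual--deletion--dual, each step preserving matroids by what precedes, and closure under minors follows by composition.

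For (ii), closure of PR under deletion is a one-line check: if $\{p\},J\in\tH\sm Y$ then $p$ and all of $J$ lie in $E\sm Y$, so the replacement $J-x+p$ furnished by PR in $\H$ stays in $E\sm Y$ and hence in $\tH\sm Y$. Non-closure under duals is Example~\ref{1.5.b:exp}, the dual of Example~\ref{exp:1.2}.(d). Non-closure under contraction I would witness by contracting the point $1$ in Example~\ref{exp:1.2}.(d): the independent sets of $\H/\{1\}$ are $\emptyset,\{2\},\{3\},\{4\},\{2,3\}$, so its bases are $\{2,3\}$ and $\{4\}$, and PR already fails for the point $4$ against the set $\{2,3\}$, since neither $\{2,4\}$ nor $\{3,4\}$ is independent. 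As a single contraction is a minor, non-closure under minors is immediate.

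For (iii) the implication $\imp$ is immediate: by (i) every minor of a matroid is a matroid, and every matroid satisfies PR by Proposition~\ref{1.4.c:prop}. The content is the converse. I would first reduce it to the classical reformulation that $\H$ is a matroid iff every restriction $\H\sm Z$ has all its maximal independent subsets of the same cardinality (from MT one gets this by extending $J$ to a maximal independent subset of $I\cup J$, and conversely). Since every restriction is a minor whose own minors are again minors of $\H$, it then suffices to prove the Key Lemma: if every minor of $\H$ satisfies PR, then all bases of $\H$ are equicardinal. I would prove this by induction on $|E|$. The driving observation (Step~A) is that the bijection of Proposition~\ref{1.6.c:prop}.(v) between bases of $\H/\{p\}$ and bases of $\H$ through a point $p$, combined with the inductive hypothesis applied to the smaller collection $\H/\{p\}$, forces all bases of $\H$ containing a fixed point to share one size. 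Granting Step~A, take bases $B,B'$ with $|B|<|B'|$: if they meet, any common point already contradicts Step~A; if they are disjoint, PR in $\H$ applied to a point $b\in B$ and the set $B'$ yields an independent $B'-y+b$ of size $|B'|$ through $b$, which extends to a basis through $b$ larger than $B$, again contradicting Step~A.

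The main obstacle is exactly this Key Lemma. PR by itself does not force equicardinal bases, as Example~\ref{exp:1.2}.(d) shows, so the argument must genuinely use that \emph{all} minors satisfy PR; the delicate point is to encode this hypothesis into an induction in which contracting a single point simultaneously shrinks the ground set and, through Proposition~\ref{1.6.c:prop}.(v), converts the minor hypothesis into the rigid Step~A about bases through a point. Once Step~A is available the two-case argument is short, so the care lies in setting up the induction and in the minor-of-a-minor bookkeeping that lets the hypothesis descend to $\H/\{p\}$, $\H\sm Z$, and their minors.
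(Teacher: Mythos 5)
Your proposal is correct, and while (i), (ii) and the forward half of (iii) coincide with the paper (same contraction counterexample $\H/\{1\}$ of Example \ref{exp:1.2}.(d), same appeal to Propositions \ref{1.5.c:prop} and \ref{1.4.c:prop}), your proof of the converse of (iii) takes a genuinely different route. The paper contracts the \emph{intersection} $C=A\cap B$ of two arbitrary bases: PR for $\H/C$ applied to $B-A$ and a point $a\in A-B$ yields the exchange condition $(*)$ that some $B-b+a$ is independent; equicardinality of bases is then extracted from $(*)$ by an iterated swap-and-extend argument, after which $(*)$ becomes the dual exchange property of Proposition \ref{1.4.b:prop}.(ii). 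You instead contract a \emph{single point}: the bijection $X\mapsto X\cup\{p\}$ between bases of $\H/\{p\}$ and bases of $\H$ through $p$, fed into an induction on $|E|$, gives your Step~A, and you then route the conclusion through the ``purity of restrictions'' characterization of matroids rather than through DEP. Both arguments are sound and both genuinely use the hypothesis on \emph{all} minors (the paper on the contractions $\H/(A\cap B)$, you on $\H/\{p\}$ and the restrictions $\H\sm Z$). The paper's approach is shorter once one accepts its rather terse ``apply $(*)$ inductively $|A|$ times'' step, which silently re-derives $(*)$ for each new pair of bases as the intersection grows; your induction on the ground set makes that bookkeeping explicit and replaces the iterated swapping by the cleaner two-case argument (common point versus disjoint bases), at the cost of having to state and prove the restriction characterization of matroids as a separate preliminary. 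One small point worth making explicit in your write-up: the descent of the hypothesis to minors of $\H/\{p\}$ and of $\H\sm Z$ relies on minors of minors being minors, which under Definition \ref{def:minor} (``a sequence of deletions and contractions'') holds by definition, so your bookkeeping concern is harmless.
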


\begin{proof} (i): Standard, see \cite{Oxley03whatis} or
\cite{oxley:matroid}. \pSkip

(ii): That PR is preserved under deletion is clear. To see that is
not closed under taking duals,  consider the hereditary collection
$\H$ given in Example \ref{exp:1.2}.(d), which satisfies PR. Its
dual, given in Example \ref{1.5.b:exp},  shows that $\H^*$ does
not satisfy PR. Similarly, since $\H$ has bases $\{1,2,3\}$,
$\{2,3,4\}$, $\{1, 4\}$, then $\H / \{ 1 \}$ has the bases
$\{2,3\}$, $\{4\}$, which clearly do not satisfy PR. \pSkip

(iii): We must show that if all the minors of $\H$ satisfy PR,
then $\H$ satisfies the dual exchange property, cf. Proposition
\ref{1.4.b:prop}(ii). Given bases $A,B$, let $C = A \cap B$, $C$
is an independent set of $\H$. Consider $\H / C$, which must
satisfy PR. Now, by Proposition \ref{1.6.c:prop}  and Remark
\ref{1.6.c:rmk}.(i), we see that PR for $\H / C$ implies $\forall
a \in A- B$, $\exists b \in B- A$ such that $(B-A) - b + a$ is
independent in $\H /C$. So $((B- A) - b +a) \cup C = B-b +a$ is
independent in $\H$.

Therefore,  we have proved the following condition:
\begin{equation}\label{eq:*}
\text{$\forall A,B \in \tB(\H)$, $\forall a\in A-B$, $\exists b
\in B-A$ such that  $B-b + a$ is independent}. \tag{$*$}
\end{equation}
We will use $(*)$ to show that all the bases of $\H$ have the same cardinality.

If $A,B$ are different bases of $\H$, with $|A| < |B|$, then
applying $(*)$ inductively $|A|$ times  and extending $B-b +a$ to
a basis each time would imply  $A \subsetneqq B$, with $B$ a basis
-- a contradiction.

But if all the bases of $\H$ have the same cardinality, then
condition $(*)$ is the same as the dual exchange property.
\end{proof}

%******************************* section *********************************
\section{Boolean and superboolean
algebras}\label{sec:superboolean}%\mnote{B6}
 In this section all the proofs will be self-contained but see the references for further
 exposition (and generalizations).

The very well known  \textbf{boolean \semiring} is the two element
idempotent \semiring \ (see Appendix~A for the formal definition)
$$\bool := (\{ 0,1\}, + , \cdot \;),$$ whose addition and
multiplication are given by the following tables:
$$ \begin{array}{l|ll}
   + & 0 & 1  \\ \hline
     0 & 0 & 1  \\
     1 & 1 & 1 \\
   \end{array} \qquad \text{and} \qquad
\begin{array}{l|ll}
   \cdot & 0 & 1\\ \hline
     0 & 0 & 0  \\
     1 &  0 & 1  \\
   \end{array} \ .
   $$

The \textbf{superboolean \semiring}  $\sbool$  is the finite
supertropical \semiring \ \cite{IzhakianRowen2007SuperTropical}, a
``cover'' of the boolean \semiring, with the  three elements $$
\sbool : = (\{ 1, 0, 1^\nu \}, + , \cdot
 \;)$$
endowed with the two binary operations:
$$ \begin{array}{l|lll}
   + & 0 & 1 & 1^\nu \\ \hline
     0 & 0 & 1 & 1^\nu \\
     1 & 1 & 1^\nu & 1^\nu \\
     1^\nu &1^\nu & 1^\nu & 1^\nu \\
   \end{array} \qquad
\begin{array}{l|lll}
   \cdot & 0 & 1 & 1^\nu \\ \hline
     0 & 0 & 0 & 0 \\
     1 &  0 & 1 & 1^\nu \\
     1^\nu & 0 & 1^\nu & 1^\nu \\
   \end{array}
   $$
 addition and multiplication, respectively.
The superboolean \semiring \ $\sbool $ is totally  ordered by $$
1^\nu \
> \ 1 \
> \ 0 .$$
Note that $\sbool$ is \textbf{not} an idempotent \semiring,  since
$1 +1 = \1$, and thus  $\bool$ is \textbf{not} a subsemiring of
$\sbool$.

The element $\1$ is called the \textbf{ghost} element, where $\tGz
:= \{0, \1\}$ is the  \textbf{ghost ideal}\footnote{In the
supertropical setting, the elements of the complement of $\tGz$
are called \textbf{tangibles}.}  of $\sbool$.  Two elements $a$
and $b$ of $\sbool$ are $\nu$-\textbf{equivalent}, written $a
\nucong b$, if $a = b$ or $a, b \in \{1,  \1 \}$.

%(See \cite{IzhakianRowen2007SuperTropical} for the axiomatic
%construction of supertropical structures.)

%\begin{remark}\label{rmk:b-bl} Set theoretically we see that $\bool \subset
%\sbool$, and therefore boolean computations, considered as
%tangibles, can be done over the superboolean setting, which in a
%sense is much richer.
%%
%%The translation back to the boolean is then provided by the
%%epimorphism (called the \textbf{boolean projection}) $$\pi:\sbool
%%\to \bool$$  given by $\pi: 1^\nu \mapsto 1$, $\pi: 1 \mapsto 1$,
%%and $\pi: 0 \mapsto 0$, which is just a ``ghost forgetful" map
%%form the superboolean semiring to boolean semiring.
%%
%As will be seen this approach is  extremely useful, especially
%when working with matrices.
%\end{remark}
%
(Further details on supertropical \semiring \ structures are given
in Appendix A below. Full details  can be found in
\cite{IzhakianRowen2007SuperTropical}.)

\subsection{Superboolean matrix algebra}\label{ssec:matrixAlg}

The semiring $M_n(\sbool)$ of $n \times n$ superboolean matrices
with entries in ~$\sbool$ is defined in the standard way, where
addition and multiplication are induced from the operations of
$\sbool$ as in the familiar matrix construction. The unit element
$I$ of
 $M_n(\sbool)$, is the matrix with $1$ on the main diagonal and
whose off-diagonal entries are all $0$.

A typical matrix is often denoted as $A =(a_{i,j})$, and  the zero
matrix is  written as $(0)$. A matrix is said to be a
\textbf{ghost matrix} if all of its entries are in $\tGz$.  A
boolean matrix is a matrix with coefficients in $\tTzB$, the
subset of boolean matrices is denoted by $M_n(\bool).$
%(Thus, boolean
%matrices are considered as tangible matrices.)

The following discussion is presented for superboolean matrices,
where boolean matrices are considered as superboolean matrices
with entries in $\tTzB$.
 Note that boolean
matrices $M_n (\bool)$ are \textbf{not} a sub-semiring of the
semiring of superboolean matrices $M_n(\sbool)$.

In the standard way, for any matrix $A \in M_n(\sbool)$, we define
the \textbf{permanent} of  $A = (a_{i,j})$ as:
\begin{equation}\label{eq:det1}
\per{A} := \sum _{\pi \in S_n}a_{\pi (1),1} \cdots a_{\pi
(n),n}\end{equation}
where $S_n$ stands for the group of permutations  of
$\{1,\dots,n\}$. Note that the permanent of a boolean matrix can
be $\1$.           We say that a matrix $A$ is
\textbf{nonsingular} if $\per{A} = 1$, otherwise $A$ is said to be
\textbf{singular}.
\begin{remark}\label{rmk:perma}
One major computational tool in tropical matrix theory is the
digraph of a matrix, we recall some basic definitions from
\cite[\S3.2]{IzhakianRowen2008Matrices}, restricted here for the
case of  superboolean semiring.

Given   an $n \times n$ superboolean matrix $A = (a_{i,j})$, we
associate the matrix $A$ with the \textbf{digraph} $\dGr _A =
(\gV, \gE)$ defined to have vertex set $\gV =\{ 1, \dots, n\}$,
and an edge $(i,j)$ from $i$ to $j$, labeled $a_{i,j}$, whenever
$a_{i,j} \ne 0$.

%The \textbf{out-degree}, $\odeg(v)$, of a vertex is the number of
%edges emanating from $v$, and the \bfem{in-degree}, $\ideg(v)$, is
%the number edges terminating at $v$.  A \bfem{sink} is a vertex
%with $\odeg(v) = 0$, while a \textbf{source} is a vertex with
%$\ideg(v) = 0$.

The \textbf{length} $\len(\pth)$ of a path~$\pth$ is the number of
edges of the path. An edge $(i,i)$ is  called a \textbf{self
loop}. A path is \textbf{simple} if each vertex appears only once.
A \textbf{simple cycle}
 is a simple path except that the starting vertex  and the terminating
vertex are the same. We define a $k$-\textbf{multicycle} $\sig$ in
a digraph to be the union of vertex  disjoint simple cycles, the
sum of whose lengths is $k$;  a $k$-multicycle $ \sig$ is labeled
$\1$ if one of its edges is labeled $\1$, otherwise $\sig$ is
labeled~$1$.

From  this graph view, each nonzero summand $a_{\pi (1),1} \cdots
a_{\pi (n),n}$ in Formula \eqref{eq:det1} corresponds to the
$n$-multicycle \begin{equation}\label{eq:sig} \sig=(\pi (1),1),
(\pi(2),2), \dots , {(\pi (n),n)}
\end{equation} in the digraph $\dGr _A$ of
$A$.  Conversely, any digraph $\dGr = (\gV, \gE)$ with $n$
vertices and edges labeled ~$1$ or $\1$ corresponds to the $n
\times n$ \textbf{adjacency matrix} $\adA(\dGr )$ over the
\semiring \ $\sbool$.

A matrix $A \in M_n(\sbool)$ is nonsingular iff there is exactly
one nonzero summand in \eqref{eq:det1} equals $1$, in particular
no summand is $\1$. This summand corresponds to a unique
$n$-multicycle of $\dGr _A$ with all edges labeled $1$ and
$\dGr_A$ has no other $n$-multicycle, otherwise the matrix $A$ is
singular.
\end{remark}

As in the case of determinants, the permanent of  a matrix  $A \in
M_n(\sbool)$ can written in terms of its minors. Denoting by
$A_{i,j}$ the minor of $A$ obtained by deleting the $i$'th row and
the $j$'th column, the permanent in Formula ~\eqref{eq:det1} can
be written equivalently as
\begin{equation}\label{eq:det2}
\per{A } := \sum _{j }a_{i,j} \per{A_{i,j}},\end{equation} for a
fixed $i = 1, \dots, n$.

It easy to verify that the permanent has the following properties:
\begin{enumerate}
   \item Permuting rows or columns of a superboolean matrix leave the permanent
   unchanged; \pSkip

   \item A matrix and its transpose have the same permanent;

     \pSkip
  \item Multiplication of any given row or
  column of a superboolean matrix  by  $\1$ or $0$ makes it
  singular.
\end{enumerate}
%\end{remark}

\begin{lemma}\label{lem:2.1.f}
A matrix $ A  \in M_n(\sbool)$ is nonsingular iff by independently permuting
columns and rows it has the triangular form
\begin{equation}\label{eq:trgform}
  A' :=  \(\begin{array}{cccc}
  1 & 0 &\cdots & 0 \\
  * & \ddots &  \ddots &\vdots\\
\vdots & \ddots & 1& 0\\
  * &  \cdots &*  & 1\\
   \end{array}\),
\end{equation}
 with all diagonal entries $1$, all entries above the diagonal are
 $0$, and the entries below the diagonal belong to  $\{1, \1, 0 \}$.

  Such
 reordering of $A$ is equivalent to multiplying the matrix $A$ by two
 permutation matrices $\Pi_1$ and~ $\Pi_2$ on the right and on the
 left, respectively, i.e., $A' := \Pi_1 A \Pi_2$.
\end{lemma}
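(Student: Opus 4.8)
The plan is to translate both implications into the digraph language of Remark \ref{rmk:perma}, where nonsingularity of $A$ is characterized by the existence of a \emph{unique} $n$-multicycle in $\dGr_A$, that multicycle being necessarily labeled $1$. In matrix terms this says $A$ is nonsingular iff there is a unique permutation $\pi_0\in S_n$ with $a_{\pi_0(i),i}\neq 0$ for all $i$, and then $a_{\pi_0(i),i}=1$ for every $i$. Both directions of the lemma follow from manipulating this unique transversal by row and column permutations.

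For the direction asserting that the triangular shape is sufficient, I would begin with a matrix $A'$ already in the form \eqref{eq:trgform}. Since permuting rows and columns leaves the permanent unchanged (property (1) of the permanent), it suffices to compute $\per{A'}$ directly. In the expansion \eqref{eq:det1} a nonzero summand $a'_{\pi(1),1}\cdots a'_{\pi(n),n}$ forces $a'_{\pi(i),i}\neq 0$ for all $i$; because every entry strictly above the diagonal of $A'$ vanishes, this means $\pi(i)\geq i$ for all $i$, and summing over $i$ together with $\sum_i\pi(i)=\sum_i i$ yields $\pi=\mathrm{id}$. The identity summand equals the product of the diagonal entries, all of which are $1$, so $\per{A'}=1$ and $A'$ is nonsingular.

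For the converse, suppose $A$ is nonsingular with unique transversal $\pi_0$ as above. First I would permute the rows of $A$ so that the nonzero transversal $\{a_{\pi_0(i),i}\}_i$ is carried onto the main diagonal (a single left multiplication by a permutation matrix), obtaining a matrix $B$ whose diagonal consists entirely of $1$'s; since a permutation sets up a bijection between the $n$-multicycles of $A$ and those of $B$, the identity is now the unique permutation whose associated product is nonzero. The crux is then to show that the nonzero off-diagonal entries of $B$ form an \emph{acyclic} pattern: if there were indices $c_1,\dots,c_k$ with $k\geq 2$ and $b_{c_2,c_1},b_{c_3,c_2},\dots,b_{c_1,c_k}$ all nonzero, then the permutation cycling $c_1,\dots,c_k$ and fixing everything else would have nonzero product and differ from the identity, contradicting uniqueness. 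In particular no pair $b_{i,j},b_{j,i}$ with $i\neq j$ can both be nonzero, so the relation ``$b_{i,j}\neq 0$'' defines a strict partial order on the indices.

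The remaining step is routine: an acyclic relation admits a topological linear extension, i.e.\ a permutation $\sigma$ of $\{1,\dots,n\}$ with $\sigma(i)>\sigma(j)$ whenever $b_{i,j}\neq 0$ and $i\neq j$. Applying $\sigma$ simultaneously to the rows and columns of $B$ (a composition of one row permutation and one column permutation, hence again of the allowed form) sends each nonzero off-diagonal entry to a position strictly below the diagonal, leaves the unit diagonal intact, and clears everything above it; the resulting matrix is exactly \eqref{eq:trgform}, with all subdiagonal entries automatically lying in $\{1,\1,0\}=\sbool$. Composing the two reindexings exhibits $A'=\Pi_1 A\Pi_2$. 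I expect the main obstacle to be the acyclicity step, namely converting the uniqueness of the nonzero multicycle into genuine acyclicity of the off-diagonal support and checking the orientation conventions so that ``no second multicycle'' really corresponds to ``topologically sortable''; once acyclicity is established the triangularization is merely a topological sort realized by a simultaneous row/column permutation, which is legitimate precisely because the lemma allows independent permutation of rows and columns.
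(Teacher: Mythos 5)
Your proof is correct and follows essentially the same route as the paper's: compute the permanent of the triangular form via the observation that only the identity permutation survives, and for the converse put the unique nonzero transversal on the diagonal, show the off-diagonal support is acyclic because any directed cycle would combine with the remaining fixed points to produce a second nonzero permutation summand, and finish with a topological sort realized as a simultaneous row/column permutation. The paper phrases the acyclicity step in the digraph language of Remark \ref{rmk:perma} (deleting self-loops and arguing that a cycle would yield a second $n$-multicycle), which is the same argument you give in matrix terms.
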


\begin{proof}%\mnote{B10 rewrite}
Any matrix $A'$ of the triangular form \eqref{eq:trgform} is
nonsingular since the only permutation whose evaluation is not
equal to zero in \eqref{eq:det1} is the identity permutation,
which has value $1$ by construction, and therefore $\per{A'} = 1$.
\pSkip

$(\Rightarrow)$: Assume that $A = (a_{i,j})$ is nonsingular, then
Equation \eqref{eq:det1} has a unique summand (corresponding to
unique permutation $\pi_0 \in S_n$) of value $1$ and all other
summands (corresponding to permutations $\neq \pi_0$ in $S_n$) are
of value $0$. Permuting
 rows (columns) of $A$, we may assume that $\pi_0$
is the identity permutation, i.e., $a_{i,i} =1$ for any $i =
1,\dots,n$. Then, each of the  vertices of the digraph $\dGr _A$
of $A$ has a self loop $\sig_i = (i,i)$. Then, by Remark
\ref{rmk:perma}, since $A$ is nonsingular, $\dGr _A$ has the
unique $n$-multicycle $\sig$ consisting of these $n$ self-loops.

Let $\widetilde{\dGr} _A$ be the digraph obtained from $\dGr_A$ by
deleting all the self loops $\sig_i$. $\widetilde{\dGr}_A$ is then
an acyclic digraph, i.e., has nos cycles, since otherwise
$\widetilde{\dGr}_A$ would have a cycle which together with some
other self-loops $\sig_i$ of $\sig$ in $\dGr_A$ composes another
$n$-multicycle  (in $\dGr_A $) which would contradict the
nonsingularity of $A$, since then $\dGr_A$ would have more then
one $n$-multicycle, cf. Remark \ref{rmk:perma}. Thus, the digraph
$\widetilde{\dGr}_A$ can be reordered such that $i
> j $ for any edge $(i,j)$; in other words $a_{i,j} = 0$ for any
$j \geq i$. This reordering is equivalent to independently
permuting columns and rows of the associated matrix. Joining back
the self-loops $\sig_i$ that were omitted to the vertices of
$\widetilde{\dGr}_A$, corresponding to the diagonal entries of the
adjacency matrix $\adA(\widetilde{\dGr}_A)$, we get the desired
matrix $A' = \adA(\widetilde{\dGr}_A) + I$, which is of the Form
\eqref{eq:trgform}. \pSkip

$(\Leftarrow)$:
%CHANGE
% Since  $\per{A'} = \per{\Pi_1 A \Pi_2} =1$, by
%\cite[Thoerem 3.5]{IzhakianRowen2008Matrices} we have
%$$\per{A'} = \per{\Pi_1 A \Pi_2} = \per{\Pi_1}\per{A } \per{
%\Pi_2} = 1,$$ which implies $\per{A} =1$, i.e., $A$ is
%nonsingular. This can also be seen directly since multiplying $A$
%by a permutation matrix on left (resp. right) is equivalent to
%permuting rows (resp. columns) of $A$. But, as known, permuting
%rows or columns of a superboolean matrix leaves the permanent
%   unchanged.
%
   This can  be seen directly since multiplying the matrix $A$
by a permutation matrix on left (resp. right) is equivalent to
permuting rows (resp. columns) of $A$. But, as known, permuting
rows or columns of a superboolean matrix leaves the permanent
   unchanged.
\end{proof}

%The \bfem{adjoint} matrix $\Adj{A}$ of an $n \times n$  matrix $A
%=(a_{i,j})$ is defined as the matrix $(a'_{i,j})^{\trn}$ where
%$a'_{i,j}= |A_{j,i}|$. The matrix $A^\nb$ is defined as
%$$ A^\nb := \frac{\Adj(A)}{|A|}.$$
%

Lat $A$ be an $m \times n$ superboolean matrix. We say that an $k
\times \ell$ matrix $B$, with $k \leq m$ and $\ell \leq n$,  is a
\textbf{submatrix} of~$A$ if $B$ can be obtained by deleting rows
and columns of~$A$. In particular, a \textbf{row} of a matrix $A$
is an $1 \times n$ submatrix of $A$, where a \textbf{subrow} of
$A$ is an $1 \times \ell$ submatrix of $A$, with  $\ell \leq n$. A
\textbf{minor} is a submatrix obtained by deleting exactly one row
and one column of a square matrix.

\begin{definition}\label{def:marker}
A \textbf{marker} $\rho$ in a matrix is a subrow having a single
$1$-entry and all whose other entries are $0$; the length of
$\rho$ is the number of its entries. A marker of length $k$ is
written $k$-marker.
\end{definition}
For example the nonsingular matrix $A'$ in \eqref{eq:trgform} has
a $k$-marker for each $k = 1,\dots, n$, appearing in this order
from bottom to top. (Note that in general markers need not be
disjoint.)

\begin{corollary}\label{cor:nonsing}
If a matrix $ A  \in M_n(\sbool)$ is a nonsingular matrix, then
$A$ has an
$n$-marker. %\mnote{B13 to prove}
\end{corollary}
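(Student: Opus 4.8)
The plan is to read off the conclusion directly from Lemma \ref{lem:2.1.f}. Since $A \in M_n(\sbool)$ is nonsingular, that lemma lets us independently permute the rows and columns of $A$ to produce a matrix $A' = \Pi_1 A \Pi_2$ of the triangular form \eqref{eq:trgform}, with $1$'s along the main diagonal and $0$'s strictly above it. I would then single out the top row of $A'$: by the shape of \eqref{eq:trgform} it equals $(1, 0, \ldots, 0)$, a full-length subrow whose sole nonzero entry is a single $1$ and all of whose other entries are $0$. This is precisely an $n$-marker of $A'$ in the sense of Definition \ref{def:marker}. (For emphasis, one may note that the $i$-th row of $A'$ yields, after restriction to columns $i, \ldots, n$, an $(n-i+1)$-marker, so that $A'$ carries markers of every length $1, \ldots, n$; only the length-$n$ one is needed here.)

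The second step is to transport this $n$-marker back to $A$ itself. The defining property of an $n$-marker --- a row containing exactly one entry equal to $1$ and all remaining entries equal to $0$ --- is visibly invariant under permutations: permuting columns merely relocates the single $1$ within the row and never creates an extra $1$ or a ghost entry $\1$, while permuting rows only relabels which row is under consideration. Hence undoing the column permutation $\Pi_2$ and the row permutation $\Pi_1$ carries the top row of $A'$ to some genuine row of $A$ that is still an $n$-marker. Therefore $A$ has an $n$-marker, as claimed.

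I do not expect any real obstacle: the entire content sits in Lemma \ref{lem:2.1.f}, and the corollary is an immediate consequence of its triangular normal form. The only point demanding a moment of care is the permutation-invariance claim, where one must confirm that the off-diagonal $0$'s in the top row of \eqref{eq:trgform} are true $0$'s (not ghosts), so that the rearranged row really has a single $1$-entry and all other entries $0$; this is guaranteed by the explicit form \eqref{eq:trgform}, in which every entry strictly above the diagonal is $0$.
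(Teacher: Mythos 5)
Your proof is correct and follows exactly the paper's argument: apply Lemma \ref{lem:2.1.f} to bring $A$ to the triangular form \eqref{eq:trgform} and observe that the top row is an $n$-marker. The extra care you take in transporting the marker back through the permutations $\Pi_1, \Pi_2$ is a worthwhile (if routine) elaboration of what the paper leaves implicit.
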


\begin{proof} Since $A$ is nonsingular, by  Lemma \ref{lem:2.1.f},
it can be reordered to the From \eqref{eq:trgform}. Then it is
easy to see that the top row is an $n$-marker.
\end{proof}

Note that if $A$ is $n \times n$ nonsingular matrix then it has a
$k$-marker for any $k = 1, \dots, n$, and by Lemma~\ref{lem:2.1.f}
we have such (disjoint) markers with each lies in a different row.
On the other hand, a ghost matrix has no markers at all.

\begin{example} The following are all the possible  nonsingular $2 \times 2$ superboolean
matrices, up to reordering of columns and rows:
 $$\vMat{1}{0}{0}{1}, \quad \vMat{1}{1}{0}{1}, \quad \vMat{1}{\1}{0}{1},$$
each has a 2-marker.
\end{example}

We define the superboolean $n$-space $\sbool^{(n)} = \sbool \times
\cdots \times \sbool$ as the direct product of $n$ copies of
$\sbool$. The elements  of $\sbool^{(n)}$ are the $n$ tuples
$(a_1, \dots, a_n)$ with entries $a_i$ in $\sbool$, which we call
superboolean vectors. A vector $v$ in $\sbool^{(n)}$ is boolean if
all of whose entries are in $\tTzB$. A vector whose entries are
all in $\tGz$ is called a \textbf{ghost vector}.

\begin{definition}[{\cite[Definition 1.2]{IzhakianTropicalRank}}]\label{def:tropicDep}
A collection of vectors  $v_1,\dots,v_m \in \sbool^{(n)}$ is said
to be \textbf{dependent}
 if there exist $\al_1,\dots,\al_m \in \tTzB$,  not all of them $0$,
 for which
$$
   \al_1 v_1 +  \cdots + \al_m  v_m \in
  \tGz^{(n)}.
$$
Otherwise the vectors are said to be \textbf{independent}.
\end{definition}%\mnote{B14More staff}
Note that when one of the $v_i$'s is ghost, or $v_i = v_j$ for
some $i \neq j$, then the vectors are dependent. A set of nonzero
boolean vectors can also be dependent; for example the vectors
$$ (0,1,1), \quad (1,0,1), \quad (1,1,0) $$
are dependent, since their sum is $(\1,\1,\1)$. (This example also
shows that the notions of dependence and spanning do not coincide
in this framework, since none of these vectors can be written in
terms of the others.)

The following results can be found in
\cite{IzhakianRowen2008Matrices} and
\cite{IzhakianRowen2009TropicalRank} for the general supertropical
setting, however, to make this paper self-contained we bring the
superboolean versions of these results with easier proofs.

%
%\begin{definition}\label{rdef} We say that a set $r_1, \dots, r_m$ of rows has
%\textbf{rank defect} $k$ if there are $k$ columns, which we denote
%as $c_{j_1}, \dots, c_{j_k}$, such that $a_{i,j_u} = 0$ for all $1
%\le i \le m$ and $1\le u \le k$.
%\end{definition}
%
%
%
%For example, the rows  $(1,0,1,0), (0,0,0,1), (1,0,0,0),
%(1,0,0,1)$ have rank defect $1$, since they are all $0$ in the
%second column.
%
%\begin{proposition}[{\cite[Proposition
%2.10]{IzhakianRowen2008Matrices}}]\label{ssing} Given an $n\times
%n$ matrix $A$, then  $\per{A} = 0$ iff, for some $1 \le k \le n$,
%$A$ has $k$ rows having rank defect $n+1-k.$\end{proposition}
%
%
%\begin{proposition}[{\cite[Proposition
%2.10]{IzhakianRowen2008Matrices}}]\label{ssing} Given a matrix $A
%\in M_n(\sbool)$, then  $\per{A} = 0$ iff, for some $ k = 1,
%\dots, n$, $A$ has $k \times n$ submatrix with $(n+1-k)$ columns
%whose entries are all $0$.
%\end{proposition}

\begin{definition}\label{rdef}
A set $v_1, \dots, v_k$ of vectors has
\textbf{rank defect} $\ell$ if there are $\ell$ columns, denoted
$j_1, \dots, j_\ell$, such that $v_{i,j_u} = 0$ for all $1 \le i
\le k$ and $1\le u \le \ell$.
\end{definition}

For example, the vectors $v_1 = (1,0,1,0), v_2 = (0,0,0,1), v_3 =
(1,0,0,0)$ have rank defect $1$, since they are all $0$ in the
second column; $v_1$ and $v_2$ have rank defect $2$.

\begin{proposition}[{\cite[Proposition
2.10]{IzhakianRowen2008Matrices}}]\label{ssing} An $n\times n$
matrix $A$ has permanent $0$, iff, for some $1 \le k\le n$, $A$
has $k$ rows having rank defect $n+1-k.$
\end{proposition}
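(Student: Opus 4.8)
The plan is to reduce the statement to the classical Frobenius--K\"onig theorem applied to the zero/nonzero pattern of $A$. The crucial algebraic observation is that in $\sbool$ a product $a_{\pi(1),1}\cdots a_{\pi(n),n}$ is nonzero precisely when every factor is nonzero, and a sum of superboolean elements equals $0$ only when all of its summands are $0$. Hence $\per{A}=0$ if and only if \emph{every} term of the permanent vanishes, i.e. for each $\pi\in S_n$ there is a column $j$ with $a_{\pi(j),j}=0$. In the language of Remark \ref{rmk:perma} this says exactly that the digraph $\dGr_A$ carries no $n$-multicycle at all; equivalently, forming the bipartite ``support graph'' $G$ on rows versus columns, with an edge joining row $i$ to column $j$ whenever $a_{i,j}\neq 0$, the condition $\per{A}=0$ is equivalent to $G$ having no perfect matching.

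First I would dispatch the easy implication ($\Leftarrow$) by a pigeonhole count. Suppose $k$ rows indexed by a set $R$ have rank defect $n+1-k$, witnessed by columns $C$ with $|C|=n+1-k$ and $a_{i,j}=0$ for all $i\in R$, $j\in C$. For any $\pi\in S_n$ whose permanent term is nonzero we would need $\pi(j)\notin R$ for every $j\in C$; but $\pi$ is injective on $C$, so $\pi(C)$ would be a set of $n+1-k$ distinct indices lying in the $(n-k)$-element set $\{1,\dots,n\}\sm R$, which is impossible. Hence every term vanishes and $\per{A}=0$.

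For the converse ($\Rightarrow$) I would invoke Hall's marriage theorem in its deficiency form: since $G$ has no perfect matching, there is a set $C$ of columns whose neighborhood $N(C)=\{\,i : a_{i,j}\neq 0 \text{ for some } j\in C\,\}$ satisfies $|N(C)|<|C|$. Put $R=\{1,\dots,n\}\sm N(C)$ and $k=|R|$. By construction $a_{i,j}=0$ for every $i\in R$ and $j\in C$, so the $k$ rows in $R$ have all of $C$ as common zero columns. The arithmetic closes the loop: $k=n-|N(C)|\geq n-|C|+1$, so $|C|\geq n+1-k$, and moreover $1\leq k\leq n$ because $1\leq |C|\leq n$ forces $|N(C)|\leq n-1$; discarding surplus columns from $C$ then exhibits exactly $n+1-k$ common zero columns, i.e. $k$ rows of rank defect $n+1-k$. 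Note that $k+(n+1-k)=n+1$, the signature $s+t=n+1$ of the Frobenius--K\"onig zero block.

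The main obstacle is the deficiency direction of Hall's theorem, which is the only non-routine input; the superboolean flavor is entirely absorbed in the opening reduction. To keep the argument self-contained I would supply the standard short induction on $n$ proving that Hall's condition implies a perfect matching (and then contrapose): either every nonempty proper set of columns $C$ satisfies $|N(C)|\geq|C|+1$, in which case one matches a single column arbitrarily and recurses on the resulting $(n-1)\times(n-1)$ minor; or some nonempty proper $C$ is ``tight'' with $|N(C)|=|C|$, in which case one splits $A$ into the block on $(C,N(C))$ and its complement and applies the inductive hypothesis to each block after checking that Hall's condition survives the restriction. All remaining steps are the elementary set bookkeeping displayed above.
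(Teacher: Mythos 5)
Your proof is correct, but it takes a genuinely different route from the paper's. You first observe that over $\sbool$ there is no cancellation --- a sum vanishes only if every summand does, and a product is nonzero exactly when every factor is --- so $\per{A}=0$ depends only on the zero pattern of $A$ and is equivalent to the absence of a perfect matching in the bipartite support graph; from there the statement is precisely the Frobenius--K\"onig theorem, whose hard direction you obtain from the deficiency form of Hall's marriage theorem, and whose easy direction you get by a clean pigeonhole count ($\pi$ would have to inject $n+1-k$ columns into $n-k$ rows). The paper instead argues entirely inside the permanent formalism: both directions are inductions on $n$ using the expansion \eqref{eq:det2} and a block decomposition of $A$ around a zero block, in effect re-deriving Frobenius--K\"onig from scratch. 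What your approach buys is conceptual clarity --- it exposes the proposition as a known combinatorial fact in disguise and isolates the only nontrivial input (Hall's theorem, which you correctly note must be proved or cited to keep the argument self-contained); what the paper's approach buys is uniformity, since its inductive block-decomposition template is reused immediately afterwards in the proof of Lemma~\ref{lem:regularityToIndependent}, where genuine superboolean arithmetic (the ghost element) does enter and a pure zero-pattern reduction would no longer suffice. All the bookkeeping in your argument checks out, including the verification that $1\le k\le n$ and the adjustment to exactly $n+1-k$ common zero columns.
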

\begin{proof} $(\Leftarrow)$ The case of $k=n$ is obvious, since
some column is entirely $0$. If $n>k$, we take one of the columns
$j$ other than $j_1, \dots, j_k$ of Definition~\ref{rdef}. Then
for each $i$, the minor $A_{i,j}$ has
  at least $k-1$ rows with rank defect
$(n-1)+1-k$, so has permanent $0$ by induction; hence $\per{ A } =
0,$ by Formula~\eqref{eq:det2}. \pSkip

$(\Rightarrow)$ We are done if all entries of $A$ are $0,$ so
assume for convenience that $a_{n,n}\ne 0$. Then, $\per{A_{n,n}} =
0 $ and, by induction, $A_{n,n}$ has $k\ge 1$ rows of rank defect
$$(n-1)+1-k = n-k.$$

For notational convenience, we assume that $a_{i,j} = 0$ with  $1
\le i \le k$ and $1 \le j \le n-k.$ Thus,  $A$ has the partition
 $$ A = \(
\begin{array}{c|c}
 (0) & B' \\[1mm] \hline
B''   & \overset{ } {C }
\end{array} \) ,$$
where $(0)$ stands for the $k \times (n\! -\! k)$ zero matrix,
$B'$ is a $k\times k$ matrix, $B''$ is an $(n\!-\! k )\times (n\!
-\! k)$ matrix, and $C$ is an $(n\!-\! k) \times k$ matrix.

By inspection, $\per{B' }\per{B'' } = \per{A } =0$; hence $\per{B'
}= 0$ or $\per{B'' } = 0$. If $\per{B' } = 0$,
 then, by induction, $B'$ has $k'$ rows of rank defect
$k+1-k',$ so altogether, the same $k'$ rows in $A$ have rank
defect $(n-k) + k+1-k' = n+1-k',$ and we are done by taking $k'$
instead of $k$.

If $\per{B'' } = 0$,
 then, by induction, $B''$ has $k''$ rows of rank defect
$(n-k)+1 -k'',$ so altogether, these $k+k''$ rows in $A$ have rank
defect $n+1-(k+k''),$ and we are done, taking $k+k''$ instead of
$k$.
 \end{proof}

\begin{example}\label{exm:1}
Suppose the rows of $A \in M_2(\sbool)$, $A = (a_{i,j})$, are
dependent. Then there are $\al_1, \al_2 \in \tGz$ such that
$\al_1(a_{1,1},a_{1,2}) + \al_2 (a_{2,1},a_{2,2}) \in \tGz^{(2)}$.
If $\al_1 = 0$, then $\al_2 = 1$ and $(a_{2,1},a_{2,2}) \in
\tGz^{(2)}$, implying $A$ is singular. By the same argument if
$\al_2 =1 $ then $A$ is again singular.

Assume that $\al_1 = \al_2 =1 $, then $a_{1,1} + a_{2,1} \in \tGz$
and $a_{1,2} + a_{2,2} \in \tGz$, which implies that $\per{A} =
a_{1,1} a_{2,2} +  a_{1,2} a_{2,1} \in \tGz$, i.e., $ A$ is
singular.
\end{example}

\begin{lemma}\label{lem:regularityToIndependent}
The  rows of any singular $n \times n$  matrix are dependent.
\end{lemma}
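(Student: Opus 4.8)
The plan is to prove the contrapositive: if the rows $v_1,\dots,v_n$ of $A \in M_n(\sbool)$ are \emph{independent}, then $A$ is nonsingular, i.e.\ $\per{A}=1$. I prefer this direction because ``singular'' means $\per{A}\in\tGz$, which a priori splits into the two cases $\per{A}=0$ and $\per{A}=\1$; working on the contrapositive lets me verify the single equality $\per{A}=1$ directly and so sidesteps this case split (in particular I never have to rule out a $\1$-valued permanent separately, which is the awkward part of any direct attack). I would induct on $n$. The base case $n=1$ is immediate: a single independent row $(a)$ is by definition not ghost, so $a=1$ and $\per{A}=a=1$.

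For the inductive step I would apply independence to the \emph{full} index set $S=\{1,\dots,n\}$ (all $\alpha_i=1$): the vector $v_1+\cdots+v_n$ is not in $\tGz^{(n)}$, so some column $j_0$ satisfies $\sum_{i=1}^n a_{i,j_0}\notin\tGz$, i.e.\ the sum equals $1$. In $\sbool$ a sum of several elements equals $1$ only when exactly one summand is $1$ and all others are $0$ (two $1$'s already give $\1$, and any $\1$ summand forces the total to be $\1$). Hence column $j_0$ is a unit column: there is a single row $r$ with $a_{r,j_0}=1$ and $a_{i,j_0}=0$ for every $i\ne r$. I would then delete row $r$ and column $j_0$ to form the minor $A'=A_{r,j_0}\in M_{n-1}(\sbool)$.

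Next I would check that the rows of $A'$ are again independent, so that the induction hypothesis applies. If some nonempty $S'\subseteq\{1,\dots,n\}\sm\{r\}$ had $\sum_{i\in S'}v_i$ ghost in every column $j\ne j_0$, then since $a_{i,j_0}=0$ for all $i\in S'$, the $j_0$-coordinate of that sum would also be $0\in\tGz$; thus $\sum_{i\in S'}v_i\in\tGz^{(n)}$, contradicting independence of the original rows. So the rows of $A'$ are independent and, by induction, $\per{A'}=1$. Finally I would evaluate $\per{A}=\sum_{\pi\in S_n}a_{\pi(1),1}\cdots a_{\pi(n),n}$ from \eqref{eq:det1}: because column $j_0$ is nonzero only in row $r$, every permutation with a nonzero product must send $j_0\mapsto r$, so the surviving terms are exactly $a_{r,j_0}$ times the terms of $\per{A'}$. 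Therefore $\per{A}=a_{r,j_0}\,\per{A'}=1\cdot 1=1$, and $A$ is nonsingular.

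The main obstacle will be that last computation, since the ghost layer of $\sbool$ makes it a real danger that $\per{A}$ collapses to $\1$ rather than $1$ even when one matching already evaluates to $1$. What makes the argument succeed is the choice of witness column for the \emph{full} row set $S=\{1,\dots,n\}$, which forces $j_0$ to be a genuine unit column (a single $1$, every other entry exactly $0$). This one feature does double duty: it guarantees that deleting row $r$ and column $j_0$ preserves independence, and it forces every nonzero term of \eqref{eq:det1} to route through row $r$, so no second matching and no $\1$-labeled term can survive to ghost-up the permanent. Choosing a witness for a proper subset of rows would not pin down a unit column, and the reduction would break.
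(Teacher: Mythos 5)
Your proof is correct, but it takes a genuinely different route from the paper's. The paper argues \emph{directly}: it assumes $\per{A} \in \tGz$ and explicitly constructs dependence coefficients $\al_i$ (essentially $\per{A_{i,1}}$ rounded into $\{0,1\}$), splitting into three cases ($\per{A} = \1$; $\per{A} = 0$ with some nonzero minor; $\per{A} = 0$ with all minors zero, which additionally invokes Proposition \ref{ssing}) and verifying the relation column by column via a permutation-swapping argument. You instead prove the contrapositive by induction on $n$: independence of the full row set forces some column sum to equal $1$, which in $\sbool$ can only happen for a unit column (one entry $1$, the rest exactly $0$); deleting that column and its distinguished row preserves independence, and the cofactor expansion \eqref{eq:det2} along the unit column gives $\per{A} = 1 \cdot \per{A'} = 1$. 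All the individual steps check out -- in particular the observation that a sum in $\sbool$ equals $1$ iff exactly one summand is $1$ and the rest are $0$, and the verification that a dependence among the reduced rows would lift to one among the original rows because the deleted column is $0$ off the deleted row. What the two approaches buy: the paper's proof is constructive, exhibiting the dependence relation of a given singular matrix in terms of its minors (a supertropical analogue of the adjoint--cofactor identity, in the spirit of results reused elsewhere in that theory), whereas yours is shorter, avoids the three-way case split on the value of the permanent entirely, and in effect re-derives the triangular normal form of Lemma \ref{lem:2.1.f} by peeling off unit columns one at a time; the price is that it gives no explicit dependence witnesses for a singular matrix.
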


\begin{proof}
We induct on $n$,   the case   $n =1$ is obvious. (The case $n=2$
is provided in Example \ref{exm:1}.) Permuting independently rows
and columns, we may assume that the value of \eqref{eq:det1}, up
to $\nu$-equivalence, is the attained by identity, i.e.,
$$ \per{A}  \nucong a_{1,1} \cdots a_{n,n}.$$
Let $v_1, \dots, v_n$ denote  the rows of $A$. \pSkip

\emph{Case I:} Assume that $\per{A} =  \1 $. Let $A'$ be an $m
\times m $ singular submatrix of $A$ with $\per{A'} \nucong 1$
whose diagonal lies on the diagonal of $A$.  For notational
convenience, we assume that such a singular submatrix $A'$ with
$m$ minimal is the upper left submatrix of $A$; in particular if
$a_{i,i} = \1$, for some $i$, renumbering the indices we may
assume that $a_{1,1} = \1$.

Let
\begin{equation}\label{eq:al}
\al_i = \left\{
\begin{array}{lll}
  0 & \text {if } & \per{A_{i,1}} = 0, \\[1mm]
  1 & \text {if } &  \per{A_{i,1}} \nucong 1.
  \end{array}
  \right.
\end{equation}
By assumption, some $\al_i = 1$. We claim that $\sum _i \al_i v_i
\in \tGz^{(n)}$, that is,
\begin{equation}\label{eq:toProv} \sum_i \al_i a_{i,j} \in
\tGz, \quad \text{ for each $j = 1,\dots,n$}.
\end{equation}

When $j=1$, Formula  \eqref{eq:toProv} is just the expansion of
$\per{A}$, up to $\nu$-equivalence, along the first column of~$A$,
which we claim is $\1$. Indeed, when $m =1$, i.e., $a_{1,1} = \1$,
 we are done since $\per{A} \nucong a_{1,1} \per{A_{1,1}}$.
Otherwise, since $m > 1 $ is minimal, there is some other
permutation besides the identity that also attains $\per{A'};$
that is $a_{1,1} \per{A'_{1,1}} \nucong a_{1,i} \per{A'_{1,i}}
\nucong 1$ for some $1 < i \leq m $. Thus,  $$a_{1,1}
\per{A'_{1,1}} a_{m+1,m+1} \cdots a_{n,n} \nucong a_{1,i}
\per{A'_{1,i}}a_{m+1,m+1} \cdots a_{n,n},$$  and therefore  $\al_1
a_{1,1} \nucong \al_i a_{i,1}$.

Suppose  $j > 1$, if  $ \sum _i  \al_{i} a_{i,j} = 0$ we are done.
So, assume that $\al_\ell a_{\ell,j} \neq 0$ for some $\ell$. Then
$a_{\ell,j} \nucong 1$ and
\begin{equation*}\label{eq:s1}\al_\ell =  \per {A_{\ell,1}} \nucong \prod_{i \neq \ell} a_{i,\sig(i)}
%  \nucong a_{1,\sig(1)} \cdots a_{\ell-1,\sig(\ell-1)} a_{\ell+ 1,\sig(\ell+1)}  \cdots a_{n,\sig(n)}
\nucong 1 ,
\end{equation*}
for some $\sig \in S_n$ with  $\sig(\ell) = 1$. Let  $u $ be the
index for which $\sig(u) = j$; in particular $u \neq \ell$. Let
$\sig' \in S_n$ be the  permutation with $\sig'(u) =1$,  $\sig'
(\ell) = j$, and $\sig'(i) = \sig(i)$ for each $i \neq u, \ell$.
Then we have
\begin{equation*}\label{eq:s1}\al_u =  \per {A_{u,1}}
\nucong \prod_{i \neq u } a_{i,\sig'(i)}
%\nucong a_{1,\sig'(1)} \cdots a_{u-1,\sig'(u-1)} a_{u+ 1,\sig'(u+1)} \cdots a_{n,\sig'(n)}
\nucong 1 .
\end{equation*}
Thus, $\al_u a_{u,j}$ and $\al_\ell a_{\ell,j} $ are two different
summands  in Formula \eqref{eq:toProv} with  $\al_u a_{u,j}
\nucong \al_\ell a_{\ell,j} \nucong 1$, as desired. \pSkip

\emph{Case II:} Suppose that  $\per{A } = 0$ and $A$ has a minor
$A_{i,j}$ with $\per{A_{i,j}} \neq 0$. Permuting independently
rows and columns we may assume that $i = j =1$. We define the
$\al_i$'s as in \eqref{eq:al} and claim that Equation~
\eqref{eq:toProv} is true for these $\al_i$'s. When $j=1$,
Formula~ \eqref{eq:toProv} is just the expansion of $\per{A}$
along the first column of~$A$, which we know is $0$ since $\per{A}
= 0$. For $j
> 1$ we apply the same argument as in Case I.

\pSkip

\emph{Case III:} Assume that $\per{A } = 0$ with all
$\per{A_{i,j}}$ are $0$.  We take $m$ maximal such that $A'$ is an
$m\times m$ submatrix with a minor of permanent  $\neq 0$. By
induction, we may assume that $m= n-1.$ Furthermore, it is enough
to find a dependence among the $k$ rows obtained in
Proposition~\ref{ssing}, so, again, by induction, we may assume
that $k = n-1$, and the entries in the first column are all $0$.
Since $a_{1,1} = 0$ and $\per{A'} \neq 0$, the proof is then
completed by the argument of
 Case II.
\end{proof}

\begin{theorem}[{\cite[Theorem
2.10]{IzhakianTropicalRank}}]\label{thm:regularityToIndependent}
The rows (columns) of a matrix $A \in M_n(\sbool)$ are independent
iff $A$ is
 nonsingular, i.e., $\per{A} =1$.
\end{theorem}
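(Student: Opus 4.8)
The plan is to obtain the equivalence from Lemma~\ref{lem:regularityToIndependent} together with its converse, which is the only part requiring a genuine argument. First I would dispose of the parenthetical ``columns'' case: since a matrix and its transpose have the same permanent, $\per{A}=\per{A^{\trn}}$, and the columns of $A$ are exactly the rows of $A^{\trn}$, the column statement for $A$ is the row statement for $A^{\trn}$, so it suffices to treat rows. For rows the asserted equivalence splits into two implications. The direction \emph{rows independent} $\Rightarrow$ $A$ \emph{nonsingular} is precisely the contrapositive of Lemma~\ref{lem:regularityToIndependent} (the rows of a singular matrix are dependent), so nothing new is needed there. The whole content lies in the reverse implication, which I would prove in its contrapositive form: \emph{if the rows of $A$ are dependent, then $A$ is singular}.

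To establish this, suppose the rows $v_1,\dots,v_n$ are dependent, so by Definition~\ref{def:tropicDep} there is a nonempty $S\subseteq\{1,\dots,n\}$ with $\sum_{i\in S}v_i\in\tGz^{(n)}$, and assume toward a contradiction that $A$ is nonsingular. Independently permuting rows and columns preserves both dependence (a column permutation merely permutes coordinates and sends ghost vectors to ghost vectors, while a row permutation only relabels the $v_i$) and the permanent; hence by Lemma~\ref{lem:2.1.f}, equivalently by bringing the unique permanent-attaining permutation to the identity, I may assume $a_{i,i}=1$ for all $i$. By Remark~\ref{rmk:perma}, nonsingularity then says that the all-self-loops $n$-multicycle is the \emph{only} $n$-multicycle of the digraph $\dGr_A$.

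Now comes the key step. For each $j\in S$ the $j$-th coordinate of the combination is $\sum_{i\in S}a_{i,j}\in\tGz$, and since $j\in S$ this sum contains the diagonal term $a_{j,j}=1$; as $1\notin\tGz$, there must be a further index $i\in S\setminus\{j\}$ with $a_{i,j}\neq 0$, that is, an edge $i\to j$ of $\dGr_A$. Thus every vertex of $S$ has in-degree at least one in the subdigraph on $S$ obtained by discarding self-loops, and a finite digraph in which every vertex has positive in-degree contains a directed cycle (follow predecessors until a vertex repeats), necessarily of length $\geq 2$ since self-loops were removed. Adjoining the self-loops $a_{i,i}=1$ at the remaining vertices yields a second $n$-multicycle of $\dGr_A$, distinct from the all-self-loops one, contradicting nonsingularity. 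This shows dependent rows force singularity and completes the equivalence. I expect the main obstacle to be exactly this translation of the algebraic ghost condition into combinatorics: once the diagonal is normalized to $1$, the ghost column sums over $S$ are precisely a positive-in-degree condition, and the passage from a cycle to a forbidden second multicycle is then immediate from Remark~\ref{rmk:perma}; the only other point demanding care is verifying that the initial reordering preserves dependence.
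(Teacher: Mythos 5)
Your proposal is correct, and it follows the paper's top-level structure (the hard content is isolated in the direction ``nonsingular $\Rightarrow$ independent,'' with the other direction delegated to Lemma~\ref{lem:regularityToIndependent}); but the mechanism you use for that key direction is genuinely different from the paper's. The paper invokes Lemma~\ref{lem:2.1.f} to put $A$ into the full triangular form \eqref{eq:trgform}, then takes the \emph{largest} index $i$ with $\alpha_i=1$ in a putative dependence $\sum_i \alpha_i v_i \in \tGz^{(n)}$ and observes that the $i$-th coordinate of the sum equals $a_{i,i}=1$ (all contributions from smaller indices sit above the diagonal and vanish, and larger indices have $\alpha_i = 0$) --- an immediate contradiction. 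You instead only normalize the diagonal to $1$, translate the ghost condition $\sum_{i\in S} a_{i,j}\in\tGz$ for $j\in S$ into ``every vertex of $S$ has positive in-degree in the self-loop-free subdigraph on $S$,'' extract a directed cycle of length $\ge 2$, and complete it with self-loops to a second $n$-multicycle, contradicting the uniqueness in Remark~\ref{rmk:perma}. Both arguments are sound and rest on the same underlying fact (the off-diagonal part of a nonsingular matrix is acyclic); the paper's version gets this for free from the triangular ordering and is a one-line finish, whereas yours re-derives the acyclicity obstruction directly and is more transparently combinatorial --- it makes visible \emph{why} a dependence must produce an extra multicycle, at the cost of re-proving inside the theorem a fragment of what Lemma~\ref{lem:2.1.f} already packaged. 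Your handling of the column case by transposition and your check that the initial row/column permutations preserve both the permanent and the dependence relation are both correct and match what the paper does implicitly.
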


\begin{proof}  $(\Rightarrow):$ By Lemma
\ref{lem:regularityToIndependent}.

\pSkip  \noindent  ${ (\Leftarrow)}:$ Suppose $A$ is nonsingular
and assume by contradiction that the rows $v_1, \dots ,v_n$ of $A$
are dependent. Permuting independently rows and columns do not
change the dependence relations of $v_1, \dots ,v_n$, so by Lemma
\ref{lem:2.1.f} we may assume that $A$ is of the Form
\eqref{eq:trgform} and there are $\al_1, \dots, \al_n \in \tGz $,
not all of them~ $0$, such that $\sum_i \al_i v_i \in \tGz^{(n)}$.
Let $\sum_i \al_i v_i = w$, where $w = (w_1, \dots, w_n)$. Suppose
that $i$ is the largest index for which $\al_i =1$, then it easy
to see that $w_i = 1$ -- a contradiction. Thus, the rows of $A$
are independent.
\end{proof}

\begin{corollary}[{\cite[Corollary 2.13]{IzhakianRowen2009TropicalRank}}]\label{cor:n+1vectors} Any $k
> n$ vectors in $\sbool^{(n)}$ are  dependent.\end{corollary}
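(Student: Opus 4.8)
The plan is to reduce the statement to the case of exactly $n+1$ vectors and then to manufacture an $(n+1)\times(n+1)$ singular matrix whose row dependence, supplied by Lemma \ref{lem:regularityToIndependent}, yields the dependence we want. First I would observe that it suffices to prove that any $n+1$ vectors $v_1,\dots,v_{n+1}\in\sbool^{(n)}$ are dependent. Indeed, given $k>n$ vectors, a nontrivial boolean dependence among any $n+1$ of them extends to all $k$ by assigning the coefficient $0$ to the remaining vectors: this keeps the relation nontrivial (some $\alpha_i=1$ still) and keeps the combination in $\tGz^{(n)}$.

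Next I would augment the vectors. Append one extra zero coordinate to each $v_i$, producing $\tilde v_i:=(v_i,0)\in\sbool^{(n+1)}$, and let $\tilde A\in M_{n+1}(\sbool)$ be the matrix whose rows are $\tilde v_1,\dots,\tilde v_{n+1}$. By construction the last column of $\tilde A$ is identically $0$. Since every summand $a_{\pi(1),1}\cdots a_{\pi(n+1),n+1}$ in the permanent \eqref{eq:det1} uses exactly one entry from the last column, every summand vanishes, so $\per{\tilde A}=0$; in particular $\tilde A$ is singular. I would then invoke Lemma \ref{lem:regularityToIndependent} for the singular $(n+1)\times(n+1)$ matrix $\tilde A$: its rows are dependent, so there exist $\alpha_1,\dots,\alpha_{n+1}\in\tTzB$, not all $0$, with $\sum_i\alpha_i\tilde v_i\in\tGz^{(n+1)}$. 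Reading off the first $n$ coordinates of this relation, and using that membership in $\tGz^{(n+1)}$ restricts coordinatewise to membership in $\tGz^{(n)}$, gives $\sum_i\alpha_i v_i\in\tGz^{(n)}$ with the same nontrivial boolean coefficients. By Definition \ref{def:tropicDep} the vectors $v_1,\dots,v_{n+1}$ are dependent, which completes the reduction and hence the corollary.

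The argument presents no serious obstacle; its entire weight is carried by Lemma \ref{lem:regularityToIndependent}. The only points demanding care are purely bookkeeping: verifying that padding the coefficient vector with $0$'s keeps a dependence relation nontrivial, and verifying that the vanishing last column indeed forces $\per{\tilde A}=0$ so that the lemma applies. Both are immediate once the augmentation is set up.
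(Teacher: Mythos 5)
Your proof is correct and follows essentially the same route as the paper: reduce to $n+1$ vectors, pad the $(n+1)\times n$ matrix of rows to a square singular matrix, and invoke the fact that a singular matrix has dependent rows (Lemma \ref{lem:regularityToIndependent} / Theorem \ref{thm:regularityToIndependent}). The only cosmetic difference is that you append a zero column where the paper duplicates an existing column; both make the permanent manifestly degenerate and the rest of the argument is identical.
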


\begin{proof} Assume $v_1, \dots, v_{n+1}$ are
vectors in $\sbool^{(n)}$ and consider the $(n+1) \times n$ matrix
whose rows are these vectors. Extend this matrix by duplicating
one of the columns to get a singular matrix,  whose rows are
dependent by Theorem \ref{thm:regularityToIndependent}.
\end{proof}

\begin{theorem}[{\cite[Theorem
3.6]{IzhakianRowen2009TropicalRank}}]\label{thm:base} Let $A =
(a_{i,j})$ be an $m \times n $ matrix with $n \geq m$, and suppose
that each of whose $m \times m$ submatrices is singular. Then the
rows $v_1, \dots, v_m$ of $A$ are dependent.
\end{theorem}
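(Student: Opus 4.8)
The plan is to induct on the number of rows $m$, reducing the statement to the square case already settled in Lemma~\ref{lem:regularityToIndependent}. For the base case $m=1$, a $1\times n$ matrix all of whose $1\times 1$ submatrices are singular has every entry in $\tGz$, so its single row is a ghost vector and is dependent by taking coefficient $1$ (cf. Definition~\ref{def:tropicDep}). For the inductive step I would split on the dichotomy: either $A$ has \emph{no} nonsingular $(m-1)\times(m-1)$ submatrix, or it has one. In the first case the top $m-1$ rows form an $(m-1)\times n$ matrix (with $n\geq m-1$) all of whose maximal square submatrices are singular, so the inductive hypothesis yields a nontrivial dependence among $v_1,\dots,v_{m-1}$, which extends to $v_1,\dots,v_m$ by assigning coefficient $0$ to $v_m$. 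The entire difficulty is therefore concentrated in the second case, which, after independently permuting rows and columns, I may assume is witnessed by the top-left block $B$ on rows $1,\dots,m-1$ and columns $1,\dots,m-1$, with $\per{B}=1$.

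Here I would build the dependence from the ``tall'' matrix $R$ on all $m$ rows and the pivot columns $1,\dots,m-1$, whose maximal square submatrices are the cofactors $M_i$ obtained by deleting row $i$; note $M_m=B$ is nonsingular, so at least one cofactor has permanent $1$. I set $\alpha_i=1$ when $\per{M_i}\nucong 1$ and $\alpha_i=0$ when $\per{M_i}=0$; this is a nontrivial $\tTzB$-vector since $\alpha_m=1$. To verify $\sum_i\alpha_i v_i\in\tGz^{(n)}$, I fix a column $j$, form the $m\times m$ submatrix $N_j$ on all rows and columns $1,\dots,m-1,j$, and expand its permanent along the $j$-th column via Formula~\eqref{eq:det2}, obtaining the exact identity $\per{N_j}=\sum_i a_{i,j}\per{M_i}$. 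Since $N_j$ is an $m\times m$ submatrix of $A$, it is singular by hypothesis, so $\per{N_j}\in\tGz$. If $\per{N_j}=0$ then every summand $a_{i,j}\per{M_i}$ vanishes, forcing $a_{i,j}=0$ for each supported $i$ and hence $\sum_i\alpha_i a_{i,j}=0$; and whenever all cofactors $M_i$ with $a_{i,j}\neq 0$ are non-ghost one has $\per{M_i}=\alpha_i$, so the two sides coincide exactly and $\sum_i\alpha_i a_{i,j}=\per{N_j}\in\tGz$.

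The step I expect to be the real obstacle is the residual case $\per{N_j}=\1$ in the presence of a \emph{ghost} cofactor $\per{M_i}=\1$: replacing $\per{M_i}=\1$ by the Boolean coefficient $\alpha_i=1$ can convert a genuinely ghost contribution into a tangible $1$, so that $\sum_i\alpha_i a_{i,j}$ threatens to equal $1\notin\tGz$. This pitfall is real for a \emph{bad} choice of the pivot block: already for $m=2$ one must sometimes select a different pivot column so that the lone surviving cofactor becomes tangible, whereas for other matrices every choice leaves a ghost cofactor which is nonetheless harmless because the ghost row is summed together with a tangible partner. The way I would close the gap is to choose $B$ among all nonsingular $(m-1)\times(m-1)$ submatrices judiciously — for instance so as to minimize the number of ghost cofactors of the associated tall matrix $R$ — and then argue, using the uniqueness of the $n$-multicycle attached to the nonsingular $B$ (Remark~\ref{rmk:perma}) together with the triangular normal form of Lemma~\ref{lem:2.1.f} and the singularity of every $N_j$, that a tangible collapse in some column would exhibit a nonsingular $(m-1)\times(m-1)$ submatrix contradicting the minimality of the chosen $B$. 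Pinning down this finer combinatorial argument — equivalently, showing that the pivot block can always be chosen so that the Boolean cofactor combination is genuinely ghost in every column — is the crux; once it is in place, the $\nu$-equivalence computation of the second paragraph delivers the desired dependence.
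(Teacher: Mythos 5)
Your overall strategy is genuinely different from the paper's, but as written it has a gap at exactly the point you flag, and that gap is the whole theorem. The Boolean coefficients $\alpha_i$ obtained by replacing a ghost cofactor value $\per{M_i}=\1$ with the tangible $1$ really can destroy the dependence: take $A=\left(\begin{smallmatrix}1&1&0\\ \1&0&0\end{smallmatrix}\right)$, whose three $2\times 2$ submatrices all have permanent in $\tGz$. Choosing the pivot $B=(a_{1,1})$ gives $\per{M_1}=\1$ and $\per{M_2}=1$, hence $\alpha_1=\alpha_2=1$ and $\alpha_1v_1+\alpha_2v_2=(\1,1,0)\notin\tGz^{(3)}$, even though $\per{N_2}=\1\in\tGz$ exactly as your Laplace identity predicts. (The pivot $B=(a_{1,2})$ does work here, because then $v_2$ alone is ghost.) So the correctness of your recipe depends on the choice of pivot block, and your proposed selection rule --- minimize the number of ghost cofactors and derive a contradiction from a ``tangible collapse'' --- is only a heuristic: you never show that a bad column for the minimizing pivot forces a better pivot to exist, and for $m\ge 3$ it is not even clear what quantity is being decreased when you switch blocks. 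This is precisely the delicate phenomenon that the paper's Lemma \ref{lem:regularityToIndependent} spends Cases I--III controlling in the square setting (via a minimal submatrix $A'$ with $\per{A'}\nucong 1$ and a permutation surgery showing that every nonzero column sum receives two $\nu$-equivalent tangible contributions); you would need to reprove an analogue of that argument for your rectangular bordered blocks, and the proposal does not do so.

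The paper avoids the issue entirely by inducting on $n$ rather than $m$: it treats the square case $n=m$ as a black box (Theorem \ref{thm:regularityToIndependent}), applies the inductive hypothesis to each column-deleted matrix $A^{(j)}$ to obtain coefficients $\al_{i,j}\in\tTzB$, and, if no single such dependence survives the reinsertion of column $j$, uses the pigeonhole principle ($n\ge m+1$ column indices but only $m$ possible blocking rows) to find two columns blocked by the same row and splices their two dependences into one that is ghost in every coordinate. If you want to rescue your write-up, the cheapest repair is to switch to that column induction; the route through cofactor expansions of a bordered pivot block would require substantially more work to close.
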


\begin{proof}  We
induct on $n$, having proved the theorem for $m=n$ in Theorem
\ref{thm:regularityToIndependent}. Thus, we may assume that $m<n$.

For each   $j = 1,\dots,  n$  we define $v_i^{(j)}$ to be the
vector obtained by deleting the $j$ entry, and $A^{(j)}$ to be the
submatrix of $A$ obtained by deleting the $j$ column of $A.$
Namely, the vectors $v_1^{(j)}, \dots, v_m^{(j)}$ are the rows of
$A^{(j)}$ and by induction are dependent, i.e.,  there are
$\al_{i,j} \in \tTzB$ such that $\sum_{i=1}^n \al_{i,j}v_i^{(j)}
\in \tGz^{(n-1)} .$
 We are done if $\sum _i \al_{i,j}a_{i,j} \in
\tGz$ for some $j$, since then $\sum _i \al_{i,j}v_i \in
\tGz^{(n)}$.  So, we may assume for each $j$ that $\sum _i
\al_{i,j}a_{i,j} =1 $. Pick $i_j$ such that $\sum _i
\al_{i,j}a_{i,j} = \al_{i_j,j}a_{i_j,j} =1.$

Since there are at least $m+1$ values of $i_j,$ and by pigeonhole
principle two are the same, say $i_{j'} = i_{j''}$. To ease
notation, we assume that $i_{j'} = i_{j''} = 1$. Thus,
$\al_{1,j'}a_{1,j'} = 1$  and $\al_{1,j''}a_{1,j''} = 1$, and in
particular $\al_{1,j'} = 1$ and $\al_{1,j''} =1$.  Let
%$$ \al_i =
%\begin{cases} \al_{1,j''}\al_{i,j'}& \quad \text{if}\quad ( \al_{1,j''}\al_{i,j'})^\nu =
% (\al_{1,j'}\al_{i,j''})^\nu;\\
%\al_{1,j''}\al_{i,j'} + \al_{1,j'}\al_{i,j''} & \quad \text{if}
%\quad (\al_{1,j''} \al_{i,j'})^\nu \ne (\al_{1,j'}\al_{i,j''})^\nu
%.\end{cases}$$
%
$$ \al_i =
\begin{cases} \al_{1,j''}\al_{i,j'}& \quad \text{if}\quad \al_{1,j''}\al_{i,j'}
= \al_{1,j'}\al_{i,j''},\\
\al_{1,j''}\al_{i,j'} + \al_{1,j'}\al_{i,j''} & \quad
\text{else}.\end{cases}$$

 We need to show that for each $j$
\begin{equation}\label{check1}\sum_i \al_i a_{i,j}  \in \tGz.\end{equation} The
case of $j \ne j', j''$ is immediate, since we are given $\sum_i
\al_{i,j'} a_{i,j} \in \tGz$ and $\sum_i \al_{i,j''} a_{i,j} \in
\tGz$, implying at once that $\sum_i \al_i a_{i,j} \in \tGz$.
Thus, we need to verify  \eqref{check1} for $j = j'$ and $j =
j''$; by symmetry, we assume that $j = j'$. By assumption,
$\al_{i,j'} a_{i,j'} = 0$ for each $i \neq 1.$ Thus, $\al_{1 }
a_{1,j'}= \al_{1,j'}\al_{1,j''} a_{1,j'}  > \al_{1,j''}\al_{i,j'}
a_{i,j'} = 0$. On the other hand, $$\al_{1,j'}\al_{1,j''} a_{1,j'}
\ds \leq \sum _i \al_{1,j'}\al_{i,j''} a_{i,j'} \ds =
\al_{1,j'}\sum _i \al_{i,j''} a_{i,j'} \in \tGz,$$ by the
dependence of $v_1^{(j'')}, \dots, v_m^{(j'')};$ so we conclude
that
$$\sum_i \al_i a_{i,j}   =  \al_{1,j'}\sum _i
\al_{i,j''} a_{i,j'} \in \tGz,$$ as desired.
\end{proof}

\begin{corollary}\label{cor:sinDep}
The columns (resp. rows) of an $m \times n$  matrix $A$, with $n
\leq m$ (resp. $n \geq m$), are independent iff $A$ contains an $n
\times n$ (resp. $m \times m$) nonsingular submatrix.
\end{corollary}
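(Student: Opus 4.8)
The plan is to treat the two assertions as transposes of one another and reduce everything to the row version: the rows $v_1,\dots,v_m$ of an $m\times n$ matrix $A$ with $n\ge m$ are independent if and only if $A$ has an $m\times m$ nonsingular submatrix. Once this is established, the column version for an $m\times n$ matrix $A$ with $n\le m$ follows by applying the row version to the transpose $A^{\trn}$, which is $n\times m$ with $m\ge n$: the columns of $A$ are the rows of $A^{\trn}$, and since a matrix and its transpose have the same permanent, an $n\times n$ submatrix of $A$ is nonsingular exactly when the corresponding submatrix of $A^{\trn}$ is. Thus I only need to argue the row statement.

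For the forward direction I would simply invoke the contrapositive of Theorem \ref{thm:base}. That theorem asserts that if every $m\times m$ submatrix of $A$ is singular then the rows of $A$ are dependent; negating both sides gives that independence of the rows forces at least one $m\times m$ submatrix to be nonsingular, which is precisely what is wanted. No further work is needed here, so the entire substance of the ``$\Rightarrow$'' implication is already carried by Theorem \ref{thm:base}.

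For the reverse direction, suppose $A$ contains an $m\times m$ nonsingular submatrix $B$, and let $w_1,\dots,w_m$ be its rows. Since $A$ has exactly $m$ rows, $B$ must use all of them and is obtained by selecting $m$ of the $n$ columns; thus each $w_i$ is the restriction of $v_i$ to those columns. I would argue by contradiction: were the rows of $A$ dependent, there would be $\al_1,\dots,\al_m\in\tTzB$, not all $0$, with $\sum_i \al_i v_i\in\tGz^{(n)}$. Restricting each coordinate to the $m$ columns defining $B$ preserves membership in $\tGz$ coordinatewise, so $\sum_i \al_i w_i\in\tGz^{(m)}$ with the same coefficients, still not all $0$. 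This makes the rows of $B$ dependent, contradicting Theorem \ref{thm:regularityToIndependent} since $B$ is nonsingular. Hence the rows of $A$ are independent.

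The argument has no genuine obstacle: the only real content lies in Theorem \ref{thm:base}, which supplies the forward implication, and in the transpose invariance of the permanent, which lets the column case piggyback on the row case. The one point to state carefully is that a dependence relation $\sum_i \al_i v_i\in\tGz^{(n)}$ restricts to a dependence relation of the selected subrows with the same coefficients, so that the nonsingularity of $B$ via Theorem \ref{thm:regularityToIndependent} can indeed be contradicted.
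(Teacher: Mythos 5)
Your proposal is correct and follows essentially the same route as the paper: the forward implication is the contrapositive of Theorem \ref{thm:base}, and the reverse implication observes that a nonsingular square submatrix has independent rows/columns by Theorem \ref{thm:regularityToIndependent}, and that any dependence of the full vectors would restrict to a dependence of the corresponding subvectors. You merely spell out the restriction argument and the transpose reduction more explicitly than the paper does.
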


\begin{proof} ${(\Rightarrow)}:$ If all the  $n \times n$
submatrices of $A$ are singular then the columns of $A$ are
dependent by Theorem~\ref{thm:base}.

\pSkip ${(\Leftarrow)}:$ Let $A'$ be an $n\times n$ nonsingular
submatrix of $A$, then its columns are independent  by Theorem~
\ref{thm:regularityToIndependent}. Since the columns of $A'$ are
subcolumns of $A$, then the columns of $A$ are also independent.
\end{proof}

The \textbf{column rank} of a superboolean matrix $A$ is defined
to be the maximal number of independent columns of $A$. The
\textbf{row rank} is defined similarly with respect to the rows of
$A$.

We denote the rank of a superboolean matrix $A$ by
$\rnk_\sbool(A)$, or simply by $\rnk(A)$, when it is clear form
the context.  Note that an $n \times n$ nonzero  matrix has rank
$0$ if all of its entries are in $\tGz$, i.e., when $A$ is a ghost
matrix.

\begin{corollary}[{\cite[Corollary 3.7]{IzhakianTropicalRank}}]\label{cor:nRank}
A matrix in $M_n(\sbool)$ is of rank $n$ iff it is nonsingular.
\end{corollary}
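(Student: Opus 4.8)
The plan is to derive this directly from Theorem~\ref{thm:regularityToIndependent}. First I would recall that $\rnk(A)$ is, by definition, the maximal number of independent columns of $A$ (equivalently, of independent rows). Since $A \in M_n(\sbool)$ has exactly $n$ columns, the condition $\rnk(A) = n$ is equivalent to saying that the full set of all $n$ columns of $A$ is independent: there is no larger family to choose from, so attaining the value $n$ forces every column to belong to the independent family.

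Next I would invoke Theorem~\ref{thm:regularityToIndependent}, which asserts that the columns of an $n \times n$ superboolean matrix are independent iff $A$ is nonsingular, i.e. $\per{A} = 1$. Chaining the two equivalences settles both directions at once. For the forward implication, if $\rnk(A) = n$ then all $n$ columns form an independent set, whence $A$ is nonsingular by the theorem. For the converse, if $A$ is nonsingular then its columns are independent by the theorem, exhibiting $n$ independent columns and thus $\rnk(A) = n$.

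The point worth isolating---there is no genuine obstacle here, since the substantive work resides in Theorem~\ref{thm:regularityToIndependent}---is the elementary counting observation that for a square matrix the rank reaches its ambient dimension exactly when no column (or row) may be discarded while preserving independence. I would also remark that the same nonsingularity criterion governs rows and columns symmetrically, so the column-rank and row-rank formulations of the statement coincide at the value $n$; one could alternatively phrase the argument through Corollary~\ref{cor:sinDep} applied in the square case $m = n$, where the only $n \times n$ submatrix of $A$ is $A$ itself.
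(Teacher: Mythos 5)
Your proposal is correct and follows exactly the paper's route: the paper's proof is the one-line ``Immediate by Theorem~\ref{thm:regularityToIndependent},'' and you have simply spelled out the same chaining of equivalences (rank $n$ $\iff$ all $n$ columns independent $\iff$ nonsingular). Nothing further is needed.
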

\begin{proof} Immediate by \ref{thm:regularityToIndependent}.
\end{proof}

The rank of a superboolean matrix is then invariant under the
following operations:
\begin{enumerate}\eroman

   \item permuting of rows (columns); \pSkip

   \item deletion of a row (column) whose entries are all in $\tGz$; \pSkip

   \item deletion of a repeated  row or column.

\end{enumerate}

\begin{theorem}[{\cite[Theorem
3.11]{IzhakianTropicalRank}}]\label{thm:rnkSing} For any matrix
$A$ the row rank and the column rank are the same, and this rank
is equal to the size of the maximal nonsingular submatrix of $A$.
\end{theorem}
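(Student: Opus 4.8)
The plan is to let $r$ denote the largest size of a nonsingular square submatrix of $A$ and to prove that both the column rank and the row rank are equal to $r$; the equality of the two ranks then follows at once. Everything reduces to combining Corollary~\ref{cor:sinDep} with the pigeonhole bound of Corollary~\ref{cor:n+1vectors}, so no fresh singularity estimates should be required.

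First I would establish that the column rank equals $r$. For the inequality (column rank) $\ge r$, pick a nonsingular $r \times r$ submatrix $B$ of $A$. By Theorem~\ref{thm:regularityToIndependent} the columns of $B$ are independent. These columns are subcolumns (restrictions to the row-index set of $B$) of the corresponding $r$ columns of $A$, and independence passes up: if some $\tTzB$-combination of those $r$ full columns lay in $\tGz^{(m)}$, then restricting to the rows of $B$ would yield a nontrivial ghost combination of the columns of $B$, contradicting their independence. Hence those $r$ columns of $A$ are independent and the column rank is at least $r$. For the reverse inequality, let $c$ be the column rank and choose $c$ independent columns; they are $c$ vectors in $\sbool^{(m)}$, so by Corollary~\ref{cor:n+1vectors} we must have $c \le m$. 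The $m \times c$ submatrix $A'$ they form then satisfies the hypothesis of Corollary~\ref{cor:sinDep} (the case $n \le m$), and since its columns are independent, $A'$ contains a $c \times c$ nonsingular submatrix, which is in particular a nonsingular submatrix of $A$. By maximality of $r$ this gives $c \le r$, so the column rank equals $r$.

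For the row rank I would simply transpose. Since a matrix and its transpose have the same permanent (property~(2) following Lemma~\ref{lem:2.1.f}), transposition carries nonsingular submatrices of $A$ to nonsingular submatrices of $A^{\trn}$ of the same size, so the maximal nonsingular submatrix of $A^{\trn}$ again has size $r$. Because the rows of $A$ are precisely the columns of $A^{\trn}$, applying the previous paragraph to $A^{\trn}$ shows that the row rank of $A$ equals $r$ as well. Combining the two computations, the row rank and the column rank both equal $r$, which is the asserted size of the maximal nonsingular submatrix.

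I expect the only delicate points to be the ``independence passes up to full columns'' step and the bookkeeping of the shape constraint $c \le m$ needed to invoke Corollary~\ref{cor:sinDep}; both are light, the former being exactly the contrapositive already used inside the proof of Corollary~\ref{cor:sinDep}, and the latter being supplied directly by Corollary~\ref{cor:n+1vectors}.
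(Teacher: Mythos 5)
Your proposal is correct and follows essentially the same route as the paper: the lower bound (rank at least the size of the maximal nonsingular submatrix) comes from Theorem~\ref{thm:regularityToIndependent}, the upper bound comes from Theorem~\ref{thm:base} (which you invoke in the packaged form of Corollary~\ref{cor:sinDep}), and the other rank is handled by transposition using the invariance of the permanent. The only cosmetic difference is that the paper argues for the row rank first and transposes for columns, while you do the reverse, and you spell out the ``independence passes up to full columns'' and $c\le m$ bookkeeping that the paper leaves implicit.
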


\begin{proof}
 Let $k$ be the
 row rank of $A$, and let $\ell$ be the rank of the maximal nonsingular matrix.
 Clearly $k\ge \ell,$ since any $\ell\times \ell$ nonsingular matrix has
independent rows by Theorem \ref{thm:regularityToIndependent}. On
the other hand, Theorem~\ref{thm:base} shows that $k\le \ell,$ so
$k = \ell$. The assertion for columns follows by considering the
transpose matrix, since obviously the submatrix rank of a matrix
and of its transpose are the same,   both being equal to the size
of a maximal nonsingular square submatrix.
\end{proof}

\begin{corollary}\label{cor:nRankTran}
The rank of a superboolean matrix is invariant under
transposition.
\end{corollary}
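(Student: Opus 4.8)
The plan is to deduce this instantly from Theorem \ref{thm:rnkSing}, which already identifies $\rnk(A)$ with the size of the maximal nonsingular square submatrix of $A$. The whole point is that transposition sets up a size-preserving bijection between the square submatrices of $A$ and those of $A^{\trn}$, and that it respects nonsingularity. So rather than re-examine row and column ranks directly, I would argue entirely at the level of square submatrices.

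First I would record the bijection: a $k \times k$ submatrix $B$ of $A$ is obtained by selecting some rows $i_1, \dots, i_k$ and some columns $j_1, \dots, j_k$, and the submatrix of $A^{\trn}$ obtained by selecting rows $j_1, \dots, j_k$ and columns $i_1, \dots, i_k$ is precisely $B^{\trn}$. This is clearly a bijection between the $k \times k$ submatrices of $A$ and those of $A^{\trn}$, for every $k$. Next I would invoke the basic permanent property recorded earlier in \S\ref{ssec:matrixAlg}, namely that a matrix and its transpose have the same permanent, so $\per{B^{\trn}} = \per{B}$ for every square submatrix $B$. Hence $B$ is nonsingular (i.e. $\per{B} = 1$) if and only if $B^{\trn}$ is nonsingular, and the bijection above restricts to a size-preserving bijection between the nonsingular square submatrices of $A$ and those of $A^{\trn}$.

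Finally, this means the maximal nonsingular square submatrix of $A$ and that of $A^{\trn}$ have the same size; applying Theorem \ref{thm:rnkSing} to both matrices then yields $\rnk(A) = \rnk(A^{\trn})$. There is no genuine obstacle here, since the statement is an immediate corollary: the only ingredient that needs to be invoked with care is the permanent--transpose invariance $\per{B} = \per{B^{\trn}}$, and that is exactly one of the elementary permanent properties already established for superboolean matrices.
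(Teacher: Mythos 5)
Your argument is correct and is essentially the one the paper intends: the corollary is stated without a separate proof precisely because the proof of Theorem \ref{thm:rnkSing} already observes that the submatrix rank of a matrix and of its transpose coincide, both being the size of a maximal nonsingular square submatrix. Your explicit bijection between square submatrices together with the permanent--transpose invariance is exactly that observation spelled out.
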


%******************************* section *********************************

\subsection{Ranks of matrices}
A boolean matrix $A \in M_n(\bool)$ can be formally considered as
a matrix over a field $\bF$, i.e., a member of the ring of
matrices $M_n(\bF)$, where $1$ and $0$ are respectively the
multiplicative unit and the zero of $\bF$.   In this view, the
\textbf{field rank} of $A$ is defined to be the standard matrix
rank of $A$ in $M_n(\bF)$; this rank is denoted by $\rnk_\bF(A)$.

\begin{proposition}\label{prop:rank}
$\rnk_\bF(A) \geq \rnk_\sbool(A)$ for any $A \in M_n(\bool)$ and
over any  field $\bF$.
\end{proposition}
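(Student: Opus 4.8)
The statement to prove is $\rnk_\bF(A) \geq \rnk_\sbool(A)$ for any $A \in M_n(\bool)$ and any field $\bF$. The plan is to reduce everything to the level of nonsingular square submatrices, using the characterization of superboolean rank already established in Theorem~\ref{thm:rnkSing}: $\rnk_\sbool(A)$ equals the size $r$ of the largest nonsingular (superboolean) square submatrix of $A$. So it suffices to exhibit, inside any $r \times r$ superboolean-nonsingular submatrix $B$ of $A$, an invertible-over-$\bF$ square submatrix of the same size $r$; then $\rnk_\bF(A) \geq \rnk_\bF(B) \geq r = \rnk_\sbool(A)$, since field rank is monotone under passing to submatrices.

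First I would invoke Theorem~\ref{thm:rnkSing} to pick a maximal nonsingular superboolean submatrix $B$ of $A$ with $r := \rnk_\sbool(A)$ rows and columns. Since $A$ is a boolean matrix, all entries of $B$ lie in $\tTzB = \{0,1\}$, so $B$ is genuinely a $0,1$ matrix and can simultaneously be read as a matrix over $\bF$. The key structural input is Lemma~\ref{lem:2.1.f}: because $B$ is superboolean-nonsingular, by independently permuting its rows and columns it can be brought to the triangular form \eqref{eq:trgform}, with all diagonal entries equal to $1$ and all entries strictly above the diagonal equal to $0$. Permuting rows and columns is realized by multiplying $B$ by permutation matrices $\Pi_1, \Pi_2$, i.e. $B' = \Pi_1 B \Pi_2$; crucially these same permutation matrices have determinant $\pm 1$ over $\bF$, so the field rank is unchanged by this reordering.

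Now the heart of the argument: the reordered matrix $B'$ is lower-triangular over $\bF$ with $1$'s on the diagonal (the sub-diagonal entries belong to $\{1,0\}$ since $B$ was boolean, but their values are irrelevant). Hence over the field $\bF$ its determinant is the product of the diagonal entries, $\det_\bF(B') = 1 \neq 0$, so $B'$ is invertible over $\bF$ and $\rnk_\bF(B') = r$. Therefore $\rnk_\bF(B) = \rnk_\bF(B') = r$. Since $B$ is a submatrix of $A$, field rank monotonicity gives $\rnk_\bF(A) \geq \rnk_\bF(B) = r = \rnk_\sbool(A)$, which is exactly the claim.

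The one point deserving genuine care is ensuring the boolean-versus-superboolean reading is consistent: the entries $1$ and $0$ must simultaneously denote the semiring elements and the field elements, which the paper has already set up at the start of \S\ref{ssec:matroids} (\emph{Ranks of matrices}). The main potential obstacle is the subtlety that superboolean nonsingularity ($\per{B}=1$) is \emph{not} in general the same condition as field-nonsingularity; the inequality can be strict precisely because a superboolean-singular $A$ may still have large field rank. But the \emph{direction} we need only requires that superboolean nonsingularity of the $r \times r$ witness forces field-nonsingularity of that same witness, and this is exactly what the triangular normal form of Lemma~\ref{lem:2.1.f} delivers, since a triangular matrix with unit diagonal is nonsingular over any field regardless of the entries below the diagonal. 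No new lemma is needed beyond what is already proved.
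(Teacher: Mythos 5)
Your proof is correct and follows essentially the same route as the paper: extract a maximal nonsingular $k\times k$ superboolean submatrix (via Corollary~\ref{cor:sinDep}/Theorem~\ref{thm:rnkSing}), bring it to the triangular form of Lemma~\ref{lem:2.1.f}, and observe that a unitriangular $0,1$ matrix is invertible over any field. You merely spell out the determinant computation that the paper leaves as ``clearly $\rnk_\bF(B)=k$.''
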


\begin{proof}
Suppose $\rnk_\sbool(A)  = k$, where  $ 0 \leq k \leq n$. If $k=0$
we are done, since $A$ is a boolean  matrix and thus $A = (0)$.
Otherwise, by Corollary \ref{cor:sinDep}, $A$ has a $ k\times k$
nonsingular submatrix $B$, which by Lemma \ref{lem:2.1.f} can be
permuted to the triangular form \eqref{eq:trgform}, for which we
clearly have $\rnk_\bF(B) = k$. Therefore, $\rnk_\bF(A) \geq k$.
\end{proof}

\begin{example}  Consider the matrix
$$A = \vvMat{1}{1}{0} {0}{1}{1} {1}{0}{1}.$$
If $\bF$ is a field of characteristic $\neq 2$, then $\rnk_\bF(A)
= 3$, while $\rnk_\sbool(A) = 2$.
\end{example}

\section{Representations of hereditary collections}

\subsection{Classical representations of matroids over
fields}\label{sec:21.}

The traditional approach to represent a matroid  uses matrices
defined over
 fields, often finite fields, which in their turn generate vector
matroids as  explained below. This will be generalized later in
\S\ref{ssec:veHe} to hereditary collection with respect to the
superboolean semiring.

 In the sequel, we write $\kA$ to indicate
that a given matrix $A$ is considered as a matrix over the ground
structure $\Fld$ -- either a field or a \semiring.

\begin{notation}\label{nott}
Given a matrix $\kA$ and a subset $X \subseteq \Cl(\kA)$ of
columns  of $\kA$, we write $\cl{\kA}{X}$ for the submatrix of
$\kA$ having the columns $X$. Sometimes we refer to $\Cl(\kA)$ as
a collection of vectors, but no confusion should arise. Given also
a subset $Y \subseteq \Rw(\kA)$ of rows of $\kA$, we define
$\rwcl{\kA}{Y}{X}$ to be the submatrix of $\kA$ having the
intersection of columns $X$ and the rows $Y$, often also referred
to as a collection of sub-vectors.
\end{notation}%\mnote{B15 row-> col}

Any $m \times n$ matrix $\fA$ over a field $\bF$ gives rise to a
matroid $\M(\fA)$ constructed in the following classical way
\cite{whitney}. We label uniquely the columns of $\fA$ (realized
as vectors in $\bF^{(m)}$)  by a set $E := E(\fA)$, $|E| =
|\Cl(\fA)|$. The independent subsets $\tH := \tH(\fA)$ of
$\M(\fA)$ are  subsets of $E$   corresponding to column subsets of
$\fA$ that are linearly independent in~$\bF^{(m)}$. This
construction is well known, cf. \cite{oxley:matroid}, and
$\M(\fA):= (E(\fA),\tH(\fA))$ is called a \textbf{vector matroid}.

\begin{proposition}\label{2.1:prop}
$\M(\fA)$ is a matroid.
\end{proposition}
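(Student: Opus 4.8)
The plan is to verify that $\M(\fA) = (E(\fA), \tH(\fA))$ satisfies the two hereditary axioms HT1 and HT2 together with the matroid exchange axiom MT from Definition \ref{def:matroid}. The construction identifies $E$ with the columns of $\fA$ (viewed as vectors in $\bF^{(m)}$), and $\tH$ with those column subsets that are linearly independent over $\bF$. First I would check HT1 and HT2, which are immediate: the empty set of vectors is vacuously linearly independent, so $\tH$ is nonempty; and any subset of a linearly independent set of vectors is again linearly independent, which is precisely HT2. Hence $\tH(\fA)$ is a hereditary collection over $E(\fA)$, and it only remains to establish MT.

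For the exchange axiom, suppose $I, J \in \tH$ with $|I| = |J| + 1$; I must find $i \in I \sm J$ such that $J + i \in \tH$. The standard argument is by contradiction using the dimension of the span: let $V_J$ and $V_I$ be the subspaces of $\bF^{(m)}$ spanned by the columns indexed by $J$ and $I$ respectively. Since these column sets are linearly independent, $\dim V_J = |J|$ and $\dim V_I = |I| = |J| + 1$. If no $i \in I \sm J$ could be adjoined to $J$ while preserving independence, then every vector indexed by $I \sm J$ would lie in $V_J$; together with the fact that the vectors indexed by $I \cap J$ already lie in $V_J$, this forces the entire column set indexed by $I$ to lie in $V_J$, so $V_I \subseteq V_J$. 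But then $|J| + 1 = \dim V_I \leq \dim V_J = |J|$, a contradiction. Therefore some $i \in I \sm J$ satisfies $J + i \in \tH$, establishing MT.

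I expect this proof to be essentially routine, as it is the classical verification that a vector matroid is a matroid. The only point requiring a little care is the contradiction step: one must correctly observe that if every vector of $I \sm J$ lies in $\operatorname{span}(J)$ then all of $\operatorname{span}(I)$ lies in $\operatorname{span}(J)$, which uses that $I \cap J \subseteq J$ so those vectors are already in the span. The linear-algebraic fact that a linearly independent set spans a subspace of dimension equal to its cardinality is what makes the dimension count work, and this is the crux of why fields (rather than general semirings) are needed here — a genuine notion of dimension and the rank-nullity behavior of spans over a field. Since the statement only asserts $\M(\fA)$ is a matroid and the ambient structure $\bF$ is a field, no subtlety about the lack of negation arises, and the argument proceeds without obstacle.
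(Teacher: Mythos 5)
Your proof is correct and is the standard argument; the paper itself gives no proof of Proposition \ref{2.1:prop}, simply recording that the construction of vector matroids is classical and deferring to \cite{oxley:matroid}. Your verification of HT1, HT2 and the dimension-count contradiction for MT is exactly the textbook argument the paper is implicitly invoking, so there is nothing to reconcile.
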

%Along all this paper we always consider the dependence relations
%of a matrix column-wise

An equivalent way to describe the independent subsets of a vector
matroid $\M(\fA)$, using Notation~\ref{nott}, is as follows (WT
stands for ``witness''):
 \boxtext{
$\begin{array}{llll}
 \text{WT:} & X \in  \tH(\kA) & \iff & \exists Y \subseteq
\Rw(\kA) \text{ with } |X| = |Y|    \\[1mm]
& & & \text{ such that $\clrw{\kA}{X}{Y}$  is nonsingular over
$\bK$,}
\end{array}
$} where here we take $\bK = \bF$ to be a field. (This condition
is central in our development, to be used later for semirings as
well.)

A matroid  $\M'$ that is isomorphic (cf. Definition
\ref{def:HCIso}) to a vector matroid $\M(\fA)$ for some matrix~
$\fA$ over a field $\bF$, is said to be
\textbf{field-representable}, written
$\bF$\textbf{-representable}, and the matrix $\fA$ is called a
\textbf{field-representation}, written
$\bF$\textbf{-representation}, of $\M'$. We write $\fA(\M')$ for
an $\bF$-representation of~$\M'$, which need not be unique. Given
a subset $X \in \tH(\M')$, with $|X| = k$, a nonsingular $k \times
k$ minor $\clrw{\fA}{X}{Y}$, $Y \subseteq \Rw(\fA)$, of the
$\bF$-representation $\fA := \fA(\M')$ of $\M'$ is termed a
\textbf{witness} of $X$ in $\fA$. (In particular the columns of
$\clrw{\fA}{X}{Y}$, and thus the columns of $\cl{\fA}{X}$, are
independent.)

The \emph{new simple idea of this paper} is to replace the role of
the field $\bF$, used for classical  matroid reorientations, by
some commutative semiring; this allows the representation of any
matroid, and moreover of any hereditary collection, as will be
described next. In this paper we take this commutative semiring to
be the superboolean \semiring \ and show that for some cases the
use of the boolean \semiring \ is sufficient.

\subsection{$\sbool$-vector
hereditary collection}\label{ssec:veHe} Given a matrix $\sbA$ over
the superboolean \semiring \ $\sbool$, by the same construction as
explained above for vector matroids, using condition WT  with $\bK
= \sbool$, we define the hereditary collection $\H(\sbA)$, where
now dependence of columns and nonsingularity of submatrices are
taken in the superboolean sense, cf. Definition
\ref{def:tropicDep}. Formally, we have the following {important
key  definition}:
\begin{kdefinition}\label{defn:VecHC} Given an $m \times n$  superboolean matrix
$\sbA$, we define  $\H(\sbA) := (E, \tH)$ to be the  hereditary
collection  with $E := E(\Cl(\sbA))$ corresponds uniquely to the
columns of $\sbA$, i.e., $|E| =|\Cl(\sbA)|$, and whose independent
subsets $\tH := \tH(\sbA)$ are column subsets of $\sbA$ that are
independent in the $m$-space $\sbool^{(m)}$, namely, satisfying
condition WT above for $\bK= \sbool$.

 We call $\H(\sbA)$  an
\textbf{$\sbool$-vector hereditary collection}, and say that it is
a $\bool$-vector hereditary collection when $\sbA$ is a boolean
matrix.
\end{kdefinition}

Having this notion of $\sbool$-vector hereditary collections, we
say that a hereditary collection $\H'$ is
\textbf{superboolean-representable}, written
${\sbool}$\textbf{-representable}, if it is isomorphic
(cf.~Definition~\ref{def:HCIso}) to an $\sbool$-vector hereditary
collection $\H(\sbA)$ for some superboolean matrix $\sbA$ and
write $\sbA(\H)$ for an ${\sbool}$\textbf{-representation} of
$\H$. When the matrix $\sbA(\H)$ is a boolean matrix, i.e., with
$1$, $0$ entries, we call this representation a \textbf{boolean
representation}, written ${\bool}$\textbf{-representation}, and
say that $\H$ is ${\bool}$\textbf{-representable}. We use the same
terminology as before and called a $k \times k$ nonsingular minor
$\clrw{\sbA}{X}{Y}$ of $\sbA$ the \textbf{witness} of the
independent subset  $X \subseteq E$, $|X|=k$, in the
$\sbool$-representation $\sbA := \sbA(\H)$ of the hereditary
collection $\H = (E, \tH)$.

\begin{remark} Given a superboolean matrix $\sbA$ the $\sbool$-vector hereditary collection
$\H(\sbA)$  needs not be a matroid. For example the vector
hereditary collection $\H(\sbA)$ of the matrix
$$ \sbA = \( \begin{array}{cccc}
 1 & 0 & \1& 1\\
 0 & 1  & 1 & \1\\
   \end{array}\) $$
is not a matroid. The independent subsets of $E = \{1, 2,3, 4 \}$
are $\{1,2 \}$, $\{1,3 \}$, $\{2,4 \}$,  all the singletons of
$E$, and the empty set.  Therefore, Axiom MT is not satisfied for
the subset $\{4\}$ with respect to $\{1,3\}$.

When a matrix $\blA$ is a nonzero boolean matrix, yet the vector
hereditary collection $\H(\blA)$ needs not be a matroid. For
example consider the $\sbool$-vector hereditary collection
$\H(\blA)$ of the matrix
$$ \blA = \( \begin{array}{cccc}
 1 & 1 & 0 & 1 \\
 0 & 1 & 1 & 1 \\
 0 & 0 & 1 & 1 \\
   \end{array}\) $$
whose bases are $\{ 1,2,3 \}$, $\{1,2,4\}$, and $\{ 3,4 \}$. Thus,
$\H(\blA)$ is not a matroid.
\end{remark}

\begin{example}\label{exp:U2n} The uniform matroid $U_{2,n}$ (cf. Example \ref{exp:1.2}.(c)) is
$\bool$-representable by the $(n-1) \times n$ boolean  matrix
$$  \blA(U_{2,n}) = \(\begin{array}{ccccc}
         0 & 1 &   \cdots & & 1 \\
                  1 & 0 &  1 & \cdots & 1 \\
               \vdots &  \ddots & \ddots & \ddots & \vdots \\
              1 & \cdots & 1& 0 & 1 \\
            \end{array}
 \).
$$
One sees that any pair of columns of $\blA(U_{2,n})$ are
independent since they contain either one $0$-entry or two
$0$-entries in different positions,  and thus a $2 \times 2 $
witness of the form $\vMat{1}{0}{0}{1}$ or   $\vMat{1}{1}{0}{1},$
respectively.  On the other hand, any column has at most one
$0$-entry, and therefore any $3 \times 3$ submatrix is singular.
Thus, any subset of more than $2$ columns is dependent.
\end{example}

%******************************* section *********************************
%\section{Superboolean representaions}

%******************************* section *********************************
%\section{Superboolean matrix algebra}

%******************************* section *********************************
%\section{}

\subsection{Superboolean  representations of  hereditary collections}
We are now ready for one of our main theorems of this paper.

\begin{theorem}\label{thm:hdCol}
Every hereditary collection $\H = (E, \tH)$ over a ground set $E$
of $n$ elements is $\sbool$-representable by an $m \times n$
superboolean matrix.
\end{theorem}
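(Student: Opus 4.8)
The plan is to exhibit an explicit superboolean matrix $\sbA$, assembled blockwise from the bases of $\H$, and to check directly that $\H(\sbA)=\H$. First I would reduce the goal to two one-sided statements. A nonzero $\{0,1\}$-combination of the columns indexed by a set $X$ that lands in $\tGz^{(m)}$ is still a valid such combination for the columns of any larger set, so superboolean dependence is upward closed; dually, superboolean independence is downward closed. Hence it suffices to arrange that (I) every basis $B\in\tB(\H)$ is independent, which forces every member of $\tH$ (being contained in some basis) to be independent, and (II) every circuit $C\in\tC(\H)$ is dependent, which forces every set outside $\tH$ (which contains some circuit) to be dependent. Since $\tH$ and its complement partition $\Pow(E)$, establishing (I) and (II) gives $\tH(\sbA)=\tH$ on the nose, the column labelling furnishing the required isomorphism of Definition \ref{def:HCIso}.

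The matrix is defined as follows. Index the columns by $E$, and for each basis $B=\{b_1,\dots,b_k\}$ attach a block of $k$ rows $(B,1),\dots,(B,k)$ with entries $a_{(B,i),e}=1$ if $e=b_i$, $\;a_{(B,i),e}=0$ if $e\in B\setminus\{b_i\}$, and $a_{(B,i),e}=\1$ if $e\notin B$. Thus each block is the identity on its own columns and entirely ghost off them; the total number of rows is $m=\sum_{B\in\tB(\H)}|B|$, and in the degenerate case $\rnk(\H)=0$ one takes a single all-ghost row. Note that the use of the ghost $\1$ is exactly what keeps the representation superboolean rather than boolean. Write $c_e$ for the column indexed by $e$.

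For (I), given $X\in\tH$, pick a basis $B\supseteq X$ and set $Y=\{(B,i):b_i\in X\}$. Because $X\subseteq B$, every off-diagonal entry of the $|X|\times|X|$ submatrix $\clrw{\sbA}{X}{Y}$ vanishes, so it is the identity matrix, which is nonsingular; the witness condition WT (Definition \ref{defn:VecHC}) holds, so $X$ is independent. For (II), I claim the all-ones combination $\sum_{e\in C}c_e$ of the columns of a circuit $C$ is a ghost vector. Its coordinate in row $(B,i)$ is a superboolean sum of one $\1$ for each element of $C\setminus B$, together with a single $1$ when $b_i\in C$; this coordinate is tangible (equal to $1$) only if there is no ghost summand, i.e.\ only if $C\setminus B=\emptyset$. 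But a circuit is dependent while a basis is independent, so no basis contains $C$; hence $C\setminus B\neq\emptyset$ for every $B$, every coordinate carries a ghost summand, and $\sum_{e\in C}c_e\in\tGz^{(m)}$. Thus $C$ is dependent, and by Corollary \ref{cor:sinDep} no nonsingular $|X|\times|X|$ submatrix exists on the columns of any $X\supseteq C$, so such $X$ lie outside $\tH(\sbA)$.

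The step I expect to be the crux is the single observation driving (II): that $C\not\subseteq B$ for every basis $B$ injects a ghost entry into every row, so a purely local defect propagates to make the entire combination ghost. Everything else is routine bookkeeping, including the boundary cases (a loop $e$ lies in no basis, so $c_e$ is all-ghost, consistent with $\{e\}$ being a singleton circuit, and any ghost padding rows affect neither the identity witnesses of (I) nor the dependence certificate of (II)).
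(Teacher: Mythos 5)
Your construction is, up to a cosmetic change, the one in the paper: one block of rows per basis $B$, nonsingular on the columns of $B$ and identically $\1$ off them, all blocks stacked, with an all-ghost matrix in the rank-zero case. (The paper's block is lower-unitriangular with $\1$ strictly below the diagonal rather than the identity; both are instances of the nonsingular triangular form of Lemma \ref{lem:2.1.f}, and nothing depends on the choice.) The forward verification --- exhibiting the sub-identity of the block of some basis $B\supseteq X$ as a witness for $X\in\tH$ --- is also the paper's. Where you genuinely diverge is in showing that no spurious independent sets are created. The paper argues through witnesses: any $k\times k$ witness for $X$ contains a $k$-marker (Corollary \ref{cor:nonsing}), a marker row has only $0$ and $1$ entries and so must avoid the ghost entries, hence lies inside a single block, forcing $X$ into a single basis. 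You instead pass to circuits and produce an explicit dependence certificate: since no basis contains a circuit $C$, every row of the matrix sees at least one $\1$ among the columns of $C$, so $\sum_{e\in C}c_e$ is ghost, and upward closure of dependence finishes. Your route is arguably cleaner --- it replaces the marker bookkeeping by a one-line computation and makes the role of the ghost entries completely transparent --- at the mild cost of invoking the (standard, but worth stating) facts that every dependent set contains a circuit and every independent set extends to a basis, both immediate from finiteness of $E$. Both arguments are correct, and your boundary-case remarks (rank $0$, loops, ghost padding) are consistent with the paper's treatment.
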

\begin{proof} We prove the theorem by  constructing an explicit
$\sbool$-representation $\sbA(\H)$ for a given hereditary
collection $\H$. The columns of $\sbA(\H)$ will be labeled by the
ground set $E$ and each independent subset $X \subseteq E$, with
$|X| = k$, will correspond to a column subset labeled by $X$ and
containing a witness, i.e., a ~$k\times k$ nonsingular minor, cf.
WT above.

%Given a matrix $\blA$ whose columns are labeled by the elements of
%$E$, and a nonempty subset $X \subseteq E$, we write $\blA[X]$ for
%the sub-matrix of $\blA$ having columns the $X$. Given also a
%subset $Y \subset \Rw(\blA)$ of rows of $\blA$ we write
%$\blA[X,Y]$ for the sub-matrix of $\blA$ having the columns $X$
%and the rows $Y$.

When $E$ is empty, then $\H$ is represented by the formal $0
\times 0$ matrix, i.e., by the empty matrix. So, throughout we
assume that $|E| > 0$.
 In the case that $\tH = \{ \emptyset \}$, $\H$ can be
$\sbool$-represented by any  $m \times n$ ghost matrix, and in
particular by any $1 \times n$ ghost matrix.%\mnote{B16-17}

Suppose that $\tH$ contains a nonempty subset of $E$, and let $
\tB(\H) = \{J_1, \dots, J_\ell \}$ be the set of bases of $\H$.
Given a basis $J_i \in \tB(\H)$, $J_i = \{b_{i_1}, b_{i_2}, \dots,
b_{i_{m_i}}\} $, with $m_i$ elements, we define the $m_i \times n
$ matrix $\sbC^{(i)} := \sbA(J_i)$ having the $ m_i \times m_i$
nonsingular minor, whose columns correspond to the elements
$b_{i_1}, b_{i_2}, \dots, b_{i_{m_i}}$ of $J_i$ assuming $b_{i_1}
\leq  {i_2} \leq  \cdots  \leq {i_{m_i}}$, to be of the form (we
use Notation \ref{nott} for submatrices)
\begin{equation}\label{eq:setRep}
 \cl{\sbC^{(i)}}{J_i} = \(\begin{array}{cccc}
        1& 0 &  \cdots & 0 \\
               \1 &  \ddots & \ddots & \vdots \\
              \vdots &  \ddots & 1 & 0 \\
              \1 & \cdots & \1 & 1 \\
            \end{array}
 \),
\end{equation}
i.e., $\cl{\sbC^{(i)}}{J_i}$ has $1$ on the main diagonal, $0$
strictly above the diagonal, whence  $\1$ strictly below the
diagonal;
 all the other entries of $\sbC^{(i)}$ are $\1$. Namely,
after permuting the columns of $\sbC^{(i)}$, we have the form
\begin{equation}\label{eq:setRep2}
 \sbA  (J_i) = \sbC^{(i)} := \(\begin{array}{ccc|ccc}
 &    & & \1 & \cdots & \1 \\
 & \cl{\sbC^{(i)}}{J_i}   & & \vdots & \ddots & \vdots \\
 &    & & \1 & \cdots & \1 \\
            \end{array}
 \),
\end{equation}
with $i_1 = 1, i_2 = 2, \dots, i_{m_i} =m_i$.  Clearly
$\sbC^{(i)}$ has rank $m_i$, since it contains the $m_i \times
m_i$ witness $\cl{\sbC^{(i)}}{J_i}$, whence its rows are linearly
independent by Corollary \ref{cor:sinDep}. Accordingly,
$\sbA(J_i)$ is an $\sbool$-representation of the hereditary
collection $\H_i = (E, \Pow(J_i))$, $J_i \subseteq E$.

Having the matrices $\sbC^{(i)} = \sbA(J_i)$ at hand, for each
basis $J_i$ of $\H$, $i = 1, \dots, \ell$, we construct the matrix
\begin{equation}\label{eq:setRep1}
\sbB := \(
\begin{array}{l c}
\sbA(J_\ell) := & \begin{array}{|ll|ll|ll|}\hline
      &  &   &            & &  \\
    & (\1) \ & \cl{\sbC^{(\ell)}}{J_\ell} & &     (\1)     &  \\
    &  &  &             & &  \\  \hline
              \end{array} \\
&               \vdots \qquad \qquad \qquad \vdots \\
\sbA(J_2) := & \begin{array}{|lll|lll|}\hline
      &  &   &            & &  \\
    & (\1) \qquad & &          & \cl{\sbC^{(2)}}{J_2} & \\
    &  &  &             & &  \\  \hline
              \end{array} \\
              %%%
\sbA(J_1) := & \begin{array}{|lll|lll|}\hline
      &  &   &            & &  \\
    & \cl{\sbC^{(2)}}{J_1} & & \qquad  (\1)         &  & \\
    &  &  &             & &  \\  \hline
              \end{array} \\
               \end{array} \)
\end{equation}
by \emph{stacking} the matrices $\sbA(J_i) $ one over the other
with respect to their columns labeling; $(\1)$ stands for a matrix
all of its entries are $\1$. (Note that Form \eqref{eq:setRep1}
only illustrates the construction of $\sbB$, the columns of
$\cl{\sbC^{(i)}}{J_i}$ need not be consecutive in $\sbA(J_i)$, and
these blocks need not overlap each other.)

The matrix $\sbB$  has the following properties:
\begin{enumerate} \ealph
    \item every  row has exactly one $1$-entry; \pSkip

    \item each $k$-marker, cf. Definition  \ref{def:marker}, is
    contained in a $k \times k$ witness which is a $k \times k$ submatrix of
    $\sbA(J_i)$, for some $i$.
\end{enumerate}

%It ease to verify that by this construction any nonsingular minor
% of $\sbB$ is of the form \eqref{eq:setRep}.

To prove that $\sbB$ is a proper  $\sbool$-representation of $\H$,
we need to
    verify that the  dependence and independence relations satisfied   by the columns of $\sbB$
    are exactly those of $\H$. These relations
  have been recorded separately by the matrices $\blA(J_i)$ for the
  bases $J_i$
of $\H$.

Let $Y_i
    \subseteq \Rw(\sbB)$  denote the subset of rows of $\sbB$
    corresponding to the matrix $\blA(J_i)$. Given a $k$-subset $X \subseteq E $, i.e., $|X| =
    k$, we claim
    that $\Cl(\cl{\sbB}{X})$ are independent iff $\Cl(\clrw{\sbB}{X}{ Y_i})$
    are
    independent for some $i = 1,\dots, \ell$.
\pSkip
     $(\Leftarrow):$  If $\Cl(\clrw{\sbB}{X}{Y_i})$  are
    independent, then $\clrw{\sbB}{X}{Y_i}$  contains a $ k \times k$
    witness, where $k \leq m_i$, which is also contained in $\cl{\sbB}{X}$, and we are
    done by Corollary \ref{cor:sinDep}.
\pSkip
 $(\Rightarrow):$ Suppose  $\Cl(\cl{\sbB}{X})$ are
independent then it contains a $k \times k$  witness, which by
Corollary~\ref{cor:nonsing} contains a $k$-marker $\rho$.  But, by
construction,  $\rho$ belongs to a $k \times k$ witness that is
contained in some $Y_i \subseteq \Rw(\blA(J_i))$; therefore
$\Cl(\clrw{\sbB}{X}{Y_i})$ is independent again  by Corollary
\ref{cor:sinDep}.
%Note that dependent elements of $\H$ are represented by
%$\1$-columns in $\sbB$.
\end{proof}

\begin{corollary}
Any matroid is $\sbool$-representable.
\end{corollary}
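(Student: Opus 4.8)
The plan is to recognize the corollary as an immediate specialization of Theorem \ref{thm:hdCol}. By Definition \ref{def:matroid}, a matroid $\M = (E, \tH)$ is a pair in which $\tH$ is hereditary over $E$ (so it satisfies HT1 and HT2) together with the extra exchange axiom MT. In particular $\M$ is, first and foremost, a hereditary collection, so the hypothesis of Theorem \ref{thm:hdCol} applies to it verbatim.

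First I would invoke Theorem \ref{thm:hdCol} with the given matroid $\M$ playing the role of the hereditary collection $\H$. This yields an $m \times n$ superboolean matrix $\sbA(\M)$ whose associated $\sbool$-vector hereditary collection $\H(\sbA(\M))$ is isomorphic to $\M$ in the sense of Definition \ref{def:HCIso}. Unwinding the definition of $\sbool$-representability for hereditary collections (the terminology introduced around Definition \ref{defn:VecHC}), this isomorphism is precisely the assertion that $\M$ is $\sbool$-representable, which is what the corollary claims.

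There is essentially no obstacle here: all the work has already been carried out in the explicit stacking construction of Theorem \ref{thm:hdCol}, and the corollary merely restricts the scope of that theorem from arbitrary hereditary collections to the subclass consisting of matroids. The one point I would flag, so as not to mislead, is that the representing matrix $\sbA(\M)$ produced by the construction need not itself define a matroid; as the examples preceding Example \ref{exp:U2n} show, the $\sbool$-vector hereditary collection $\H(\sbA)$ attached to a superboolean matrix is generally not a matroid. This is harmless, however, because $\sbool$-representability is defined through \emph{isomorphism of hereditary collections} (Definition \ref{def:HCIso}) and never demands that the representing matrix cut out a matroid.
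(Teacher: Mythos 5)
Your proposal is correct and matches the paper's (implicit) argument exactly: the corollary is stated without proof precisely because a matroid is by definition a hereditary collection, so Theorem \ref{thm:hdCol} applies verbatim. Your added caveat that the representing matrix need not itself cut out a matroid is accurate and consistent with the paper's remarks following Definition \ref{defn:VecHC}.
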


We aim for an ${\sbool}$-representation of minimal size, i.e., has
a minimal number of rows. Let us start with a naive upper bound
obtained directly from  our construction in the proof of Theorem
\ref{thm:hdCol}, that is
\begin{equation}\label{eq:upperBond} m \leq \sum_{J
\in \tB(\H)} |J|,
\end{equation} where we recall that $|J|$
stands for the cardinality of a basis $J$ of $\H$. (Clearly, by
Corollary \ref{cor:n+1vectors}, the lower bound is determined by
the rank of the hereditary collection $\H$.)

\begin{remark} Given an $\sbool$-representation  $\blA(\H)$ of a hereditary
collection $\H$, one can reduce $\blA(\H)$ by erasing repeated
rows, leaving a single representative for each subset of identical
rows; ghost rows can also be  omitted, as long  as $\blA(\H)$
remains with at least one row.
\end{remark}

\begin{example}
Let $E = \{1,2,3 \}$, and let $\tH = \{ \emptyset, \{1\}, \{2\},
\{3\}, \{1,2 \}, \{1,3\} \}$ be the independent subsets of the
hereditary collection $\H = (E, \tH)$. Thus, $ \{1,2 \}, \{1,3\}$
are the bases of $\H$.  Using the construction of the proof of
Theorem \ref{thm:hdCol} we obtain the $\sbool$-representation
$$ \sbA(\H)
 = \(
\begin{array}{ccc}
            1 & \1 & 1 \\
               0 & \1 & 1 \\
               1 & 1 & \1 \\
               0 & 1 & \1 \\
             \end{array}\).
$$
However, this  hereditary collection $\H$ can also be represented
by the smaller matrix
$$ \sbA'(\H) = \( \begin{array}{ccc}
             1 & 0 & \1 \\
                  0 & 1 & 1 \\
                \end{array}\).$$
                This is this a minimal possible $\sbool$-representation of
                $\H$, i.e., $m =1$.
\end{example}

\begin{example}\label{ex:Umn} The uniform matroid $U_{m-1,m}$ can be $\sbool$-represented
by the $m \times m$ matrix $\sbA = (a_{i,j})$ of the form:
\begin{equation}\label{form:2}
\sbA = \( \begin{array}{ccccc}
           1 & \1 & 0 & \cdots&  0 \\
             0 & 1 & \ddots & \ddots &\vdots \\
             \vdots & \ddots &  & \ddots & 0 \\
             0 &  & \ddots & 1 & \1 \\
             \1 & 0 & \cdots & 0 & 1 \\
           \end{array} \),
\end{equation}
which has $1$ on the main diagonal, $a_{1,2}=  \cdots = a_{i,i+1}
= \cdots = a_{m-1,m} = \1$, $a_{m,1} = \1$, and all  other entries
are $0$.

$\sbA$ is  a singular matrix; this seen easily by taking the sum
of its columns, which are dependent and thus $\sbA$ is singular,
cf. Corollary \ref{cor:nRank}). On the other hand, each of whose
diagonal minor $M_i = A_{i,i}$ (obtained by deleting the column
$i$ and the row $i$) is nonsingular. To see the latter, consider
the digraph $G_A$ of $\sbA$ which has two $m$-multicycles. One
$m$-multicycle is given by the $m$ self loops $\sig_i := (i,i)$,
all labeled $1$, while the other is given by the sequence
$$\sig := (1,2), (2,3), \dots, (i,i+1), \dots, (m-1,m), (m,1)$$ of
edges, all labeled $\1$. The subgraph $G_{M_i}$ of $G_A$
corresponding to a minor $M_{i}$ is obtained by deleting the
vertex $i$ from $G_A$, and thus deleting the single self loop
$(i,i)$ and two edges of $\sig$ emerging and terminating at $i$.
Accordingly, $G_{M_i}$ has a unique $(m-1)$-multicycle, composed
from $(m-1)$ self loops $\sig_i$, all labeled $1$, and thus the
minor $M_{i}$ is nonsingular by Remark \ref{rmk:perma}.
\end{example}

The above examples show that an $\sbool$-representation can often
be  reduced further.

% ADD
%
%The above examples show that an $\sbool$-representation can often
%be  reduced further, this is our next aim.

\begin{theorem}\label{thm:hetByCirc} When each basis of
a hereditary collection $\H= (E, \tH)$ is  contained in a
circuite, $\H$
 can be represented in terms of its
circuits.
\end{theorem}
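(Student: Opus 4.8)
The plan is to reuse the stacking construction of Theorem~\ref{thm:hdCol}, but with one block per \emph{circuit} rather than per basis. To each circuit $C \in \tC(\H)$, say with $|C| = s$, I would attach an $s \times n$ block $\sbC^{(C)}$ whose $s$ columns labelled by $C$ carry the $U_{s-1,s}$-representation of Example~\ref{ex:Umn} (the singular $s \times s$ matrix all of whose $(s-1)\times(s-1)$ diagonal minors are nonsingular), while every column outside $C$ is set to the all-$\1$ ghost column. Stacking these blocks over the common column labelling $E$ produces a candidate matrix $\sbB$ whose row count is $\sum_{C \in \tC(\H)} |C|$, which is precisely what it means to represent $\H$ ``in terms of its circuits.'' The first thing to record is the structural consequence of the hypothesis: if a basis $B$ lies in a circuit $C$ then $B \subsetneq C$ (as $B$ is independent and $C$ is not), so in fact $C = B + p$, and more usefully every independent set $X$ sits inside some circuit $C$ with $|X| \le |C|-1$.

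For the direction that independent sets remain independent, I would take $X \in \tH$, enlarge it to a basis $B \supseteq X$, and use the hypothesis to pick a circuit $C \supseteq B$. Then $X \subseteq C$ with $|X| \le |B| \le |C|-1$, so inside the block $\sbC^{(C)}$ --- which by Example~\ref{ex:Umn} represents $U_{|C|-1,|C|}$ --- the set $X$ is independent, and hence by Corollary~\ref{cor:sinDep} its columns carry a $|X|\times|X|$ nonsingular witness. This witness lives entirely in the rows of the single block $\sbC^{(C)}$, so it is also a witness inside $\sbB$; thus $\Cl(\cl{\sbB}{X})$ is independent. This step is exactly where the hypothesis is needed: a basis lying in no circuit would have no block available to certify it.

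For the converse I would first note that superboolean dependence is monotone --- appending zero coefficients shows that any superset of a dependent set is again dependent (Definition~\ref{def:tropicDep}) --- so since every dependent set of $\H$ contains a circuit, it suffices to prove that the columns of each circuit $C$ are dependent in $\sbB$. By Corollary~\ref{cor:sinDep} this amounts to showing that no $|C|\times|C|$ submatrix on the columns $C$ is nonsingular, and by Corollary~\ref{cor:nonsing} it is enough to verify that no row of $\sbB$, restricted to the columns $C$, is a $|C|$-marker. Here the two block types cooperate: a row of $\sbC^{(C)}$ restricted to $C$ always carries the superdiagonal $\1$ of the $U_{|C|-1,|C|}$-matrix, while a row of a block $\sbC^{(C')}$ with $C' \ne C$ is all-$\1$ on the set $C \sm C'$, which is nonempty because two distinct circuits, being incomparable minimal dependent sets, cannot be nested. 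In either case the restricted row has a ghost entry and so fails to be a marker, whence $C$ is dependent.

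The main obstacle is precisely this converse bookkeeping: one must rule out witnesses that mix rows from several circuit blocks, and the clean way to do so is the marker criterion of Corollary~\ref{cor:nonsing}, which reduces the global nonsingularity question to a local, row-by-row check. The degenerate cases then fall out for free: a loop $\{x\}$ is a one-element circuit contained in no larger circuit, so column $x$ is ghost in every block and $\{x\}$ is correctly dependent, and in particular when $\tH = \{\emptyset\}$ the matrix $\sbB$ is a ghost matrix of rank $0$. I would close by noting that the resulting representation is genuinely superboolean, since the $\1$ entries of the $U_{s-1,s}$-blocks are essential, and that its size $\sum_{C \in \tC(\H)} |C|$ is the circuit analogue of the basis bound~\eqref{eq:upperBond}.
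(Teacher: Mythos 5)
Your proposal is correct and follows essentially the same route as the paper: one $U_{|C|-1,|C|}$ block of the Form~\eqref{form:2} per circuit $C$, ghost entries on the columns outside $C$, stacked over the common labelling, with Example~\ref{ex:Umn}, Corollary~\ref{cor:nonsing} and Corollary~\ref{cor:sinDep} doing the work. The paper simply delegates the verification to ``the same argument as in Theorem~\ref{thm:hdCol}''; your marker-by-marker check of the converse (no row of $\cl{\sbB}{C}$ is a $|C|$-marker because distinct circuits are incomparable) and your explicit use of the hypothesis to place every independent set strictly inside some circuit are exactly the details that argument supplies.
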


\begin{proof} Let $C_1, \dots, C_\ell \in \tC(\H)$ be the circuits of $\H$.
Given a circuit $C_i$ with $|C_i| = m_i$ we construct the $m_i
\times n$ matrix $\sbD^{(i)} := \sbA(C_i)$ whose submatrix
$\cl{\sbD^{(i)}}{C_i}$ is of the Form \eqref{form:2} and all its
other entries are $\1$. As shown above, $\cl{\sbD^{(i)}}{C_i}$ is
an $\sbool$-representation of the uniform hereditary collection
$U_{m_i-1, m_i}$ and thus, by Example \ref{ex:Umn}, also of the
circuit $C_i$, which in the view of Example \ref{exp:1.2}.(c) is
just $\H_i := (C_i,\Pow(C_i) \sm \{ C_i \})$.

 The proof is then completed
 by stacking these matrices $\sbA(C_i)$ (each corresponds to
a different circuit $C_i$ of $\H$) one over the other, with
respect to their columns labeling, and applying the same argument
as in the proof of Theorem \ref{thm:hdCol}.
\end{proof}

Under the condition of Theorem \ref{thm:hetByCirc}, we have the
following upper bound for the size of $\sbool$-representations of
a hereditary collection:
\begin{equation}\label{eq:Hbound} m \leq \min \bigg\{ \sum_{J \in
\tB(\H)} |J|,  \sum_{C \in \tC(\H)} |C| \bigg \}.
\end{equation}

\subsection{Examples of $\sbool$-representations for matroids} In
this section we provide examples of $\sbool$-representations for
some well  known matroids. (See \cite{oxley:matroid} for further
explanation of the notation.)

\begin{example} Let $\M$ be the matroid $M(K_4)$; that is the matroid corresponding to the following
diagram (dependent 3-subsets correspond to $3$ colinear points in
the diagram):
$$\xy %(17,0)*+{2},
(5,18)*+{M(K_4):}, (18,10)*+{1},(62,-2)*+{3},(62,22)*+{2},
(40,2)*+{5}, (40,18)*+{4}, (50,10)*+{6},
(20,10)*+{\bullet},(60,0)*+{\bullet},(60,20)*+{\bullet},
(40,5)*+{\bullet}, (40,15)*+{\bullet}, (46,10)*+{\bullet},
(19,10)*+{}; (61,20)*+{}; **\crv{}, (19,10)*+{}; (61,-1)*+{};
**\crv{},(39,15)*+{}; (61,-1)*+{}; **\crv{}, (39,4)*+{};
(61,21)*+{}; **\crv{},
%
%,(29,13)*+{}; (51,13)*+{};  **\crv{(40,-13)}
\endxy$$
i.e., the matroid over $6$ elements where all the $3$-subsets are
independent expect:
$$ \{1,2,4\}, \{1,3,5\}, \{ 2,5,6\}, \{ 3,4,6\}.$$
 $\M = M(K_4)$ is $\sbool$-representable by the matrix
$$ \blA(\M) =
     \( \begin{array}{ccccccc}
       \1 & \1 & \1 & 0 & 0 & 1 \\
       1 & 0 & 0 & 1 & 1 & 1 \\
       0 & 1 & 0 & 1 & 0 & 1 \\
       0 & 0 & 1 & 0 & 1 & 1 \\
       \end{array} \).
$$
\end{example}

\begin{example} The matroid $\tW^3$, corresponding to the diagram:

$$ \xy  (10,15)*+{\tW^3:}, (22,0)*+{2},(58,0)*+{3},
 (40,-3)*+{6},(50,9)*+{5},
(30,9)*+{4}, (40,18)*+{1},
(25,0)*+{\bullet},(55,0)*+{\bullet},
 (40,0)*+{\bullet},(48,7)*+{\bullet},
(32,7)*+{\bullet}, (40,15)*+{\bullet},
(24,-1)*+{}; (41,16)*+{}; **\crv{}, (25,0)*+{}; (55,0)*+{};
**\crv{},(39,16)*+{}; (56,-1)*+{}; **\crv{},
%
%,(29,13)*+{}; (51,13)*+{};  **\crv{(40,-13)}
\endxy$$
is $\sbool$-represented by the matrix
$$ \blA(\tW^3) =
     \( \begin{array}{ccccccc}
       \1 & \1 & \1 & 0 & 0 & 1 \\
       1 & 0 & 0 & 1 & 1 & 0 \\
       0 & 1 & 0 & 1 & 0 & 1 \\
       0 & 0 & 1 & 0 & 1 & 1 \\
       \end{array} \).
$$
\end{example}

%******************************* section *********************************
\section{Boolean representations}

In this section we study boolean representations; these
representations are a special case of $\sbool$-representations
provided  by boolean matrices. Recall that we write
$\bool$-representations for the boolean representations, and say
that a hereditary collection $\H$ is $\bool$-representable if it
has a $\bool$-representation.

% \section{Matroids}

\subsection{Graphic matroids} We begin with the classical concept
of Whitney for a connection between matroids and graphs,  see
\cite{Oxley03whatis,whitney}.

Given a finite graph $\Gr := (\gV,\gE)$ with vertex set $\gV$ and
set of  edges $\gE$ ($\Gr$ might have multiple edges), we consider
the $|\gV| \times |\gE|$ incidence matrix  $\inA(\Gr) :=
(a_{i,j})$ with entry $a_{i,j} =1$ if the vertex $v_i$ is an end
point of the edge $e_j$ and $e_j$ is not a self-loop, otherwise we
set $a_{i,j} =0$. For example, the incidence matrix  of the graph
$$\xymatrix{
    a      \ar@/_/@{-}[dr]_{1 }  \ar@/^/@{-}[r]^2 &
      b \ar@/^/@{-}[r]^3   & d \ar@/^/@{-}[dl]^4\\
             & c \ar@/^/@{-}[u]_5 & \\
 } \qquad \quad \text{is} \qquad \quad
 \inA(\Gr) = \begin{array}{c|ccccc}
                 a & 1 & 1 & 0 & 0 & 0 \\
                 b & 0 & 1 & 1 & 0 & 1 \\
                 c & 1 & 0 & 0 & 1 & 1 \\
                 d & 0 & 0 & 1 & 1 & 0 \\ \hline
                   & 1 & 2 & 3 & 4 & 5  \\
                \end{array} \ .
               $$
For ease of exposition, throughout we assume  that $\Gr$ is a
connected graph. Note that now the rows of the matrix $\inA(G)$
are labeled by the vertices $\gV$, and columns are labeled by the
edges $\gE$ of the graph~$\Gr$.

By this construction we see that each column of the matrix
$\inA(\Gr)$ has either two or no  $1$-entries, and multiple edges
introduce identical columns. Accordingly, without loss of
generality, we may consider the ground set  $E_G$ as a collection
of $2$-subsets of $V_G$. A matroid constructed by this way is
called \textbf{graphic matroid} and we denote it $\inM(\Gr)$.

Let $\f2A : =  \inA(\Gr) $ be the incidence matrix $\inA(\Gr)$ of
the graph $G$, considered as a matrix over the field $\bF_2$ of
characteristic $2$. Recall that $\M(\f2A)$ denotes the vector
matroid of the matrix $\f2A$ over the field $\bF_2$.

\begin{theorem}[{\cite[Theorem
2.16]{Oxley03whatis}}]\label{thm:f2g} The independent subsets of
the  vector matroid $\M(\f2A)$ correspond to subsets of edges of
$G$ that do not contain a cycle and $\M(\f2A) =
\inM(G)$.\end{theorem}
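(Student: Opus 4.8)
The plan is to reduce the theorem to a single equivalence: a subset $S$ of columns of $\f2A$ is linearly independent over $\bF_2$ if and only if the corresponding set of edges of $\Gr$ contains no cycle. Once this is in hand, the identity $\M(\f2A) = \inM(\Gr)$ is immediate, since (recalling from \S\ref{sec:21.} that independence in $\M(\f2A)$ means $\bF_2$-linear independence of the corresponding columns) both matroids share the same ground set---the edge set $\gE$, identified with the columns of $\inA(\Gr)$---and, by the equivalence, the same family of independent subsets. The starting observation is that over $\bF_2$ the column of $\f2A$ labelled by a non-loop edge $e = \{u,v\}$ is precisely $\chi_u + \chi_v$, the sum of the two standard basis vectors of $\bF_2^{(|\gV|)}$ indexed by the endpoints of $e$, while a self-loop contributes the zero column.

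For the direction ``contains a cycle $\Rightarrow$ dependent'', I would take a cycle inside $S$ with successive vertices $v_0, v_1, \dots, v_r = v_0$ and edges $e_i = \{v_{i-1}, v_i\}$, and sum the corresponding columns to get $\sum_{i=1}^{r}(\chi_{v_{i-1}} + \chi_{v_i})$, in which every vertex on the cycle occurs exactly twice; this vanishes over $\bF_2$, giving a nontrivial dependence among the columns indexed by $S$. The degenerate cases are consistent with this: a self-loop is a zero column (a cycle of length one), and a pair of parallel edges yields two equal columns (a cycle of length two).

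For the converse, ``dependent $\Rightarrow$ contains a cycle'', a nontrivial $\bF_2$-dependence among the columns indexed by $S$ amounts to a nonempty subset $T \subseteq S$ with $\sum_{e \in T}(\chi_{u_e} + \chi_{v_e}) = 0$. If $T$ contains a self-loop, that loop is already a cycle lying in $S$ and we are done; otherwise, reading the vanishing sum coordinate by coordinate shows that every vertex is an endpoint of an even number of edges of $T$, so the loopless subgraph $H$ with edge set $T$ has all degrees even. It then remains to invoke the graph-theoretic lemma that a nonempty graph in which every vertex has even degree contains a cycle, which I would establish by the standard trail-extension argument: starting at any vertex incident to $T$ and repeatedly leaving along an unused edge, one never gets stuck (each vertex has even degree), so some vertex is eventually revisited, exhibiting a cycle inside $T \subseteq S$.

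I expect the main---indeed the only genuine---obstacle to be this last lemma, the passage from ``every vertex has even degree'' to ``there is a cycle'', together with the bookkeeping needed to keep the coordinatewise parity argument honest in the presence of self-loops and parallel edges. Everything else is a direct translation between $\bF_2$-linear algebra and the incidence structure of $\Gr$, and the final identification of the two matroids is then purely formal.
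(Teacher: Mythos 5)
Your argument is correct: it is the standard proof that $\bF_2$-linear dependence among columns of the incidence matrix is equivalent to the presence of a cycle (sum the endpoint indicator vectors around a cycle for one direction; extract from a dependence a nonempty edge set in which every vertex has even degree, hence a cycle, for the other), and the degenerate cases of self-loops and parallel edges are handled consistently with the paper's convention that a loop contributes a zero column. The paper itself gives no proof of this statement --- it is quoted from Oxley --- so there is nothing to compare against; your write-up simply supplies the standard argument that the citation points to.
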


Accordingly,  if $G$ is a connected graph, then the bases of
$\inM(G)$ are precisely the edge subsets of the spanning trees of
$G$ and, if $G$ has $\ell$ vertices, each spanning tree has
exactly $\ell - 1$ edges, so $\rnk(\inM(G)) = \ell - 1$.

%In the following proposition the reader should look at the next
%section, \S\ref{sec:superboolean}, for all undefined terminology.

\begin{proposition}\label{2.1.g:prop} Let $\Gr := (\gE, \gV)$ be a connected
graph and let $\inA(\Gr)$ be its adjacency matrix as described
 above. Considering $\inA(\Gr)$ as a matrix over
$\bF_2$ (the finite field of 2-elements) or over the boolean
\semiring \ $\bool$ gives the same matroid of rank $|\gV|-1$ whose
collection of bases correspond to the edges of the spanning trees
of~$\Gr$.
\end{proposition}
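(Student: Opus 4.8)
The plan is to build on Theorem~\ref{thm:f2g}, which already identifies the $\bF_2$-matroid of the incidence matrix with $\inM(\Gr)$ and characterises its independent sets as the cycle-free edge subsets. It therefore suffices to show that, when the boolean matrix $\inA(\Gr)$ is read with superboolean arithmetic (so that $1+1 = \1$), a subset $X$ of columns---that is, of edges---is independent in the sense of Definition~\ref{def:tropicDep} exactly when $X$ contains no cycle. Granting this, the hereditary collections $\H(\inA(\Gr))$ and $\M(\f2A)$ have the same independent sets, hence both equal $\inM(\Gr)$; the rank $|\gV|-1$ and the identification of the bases with the spanning trees then follow from the remarks after Theorem~\ref{thm:f2g}, since in a connected graph the maximal cycle-free edge sets are precisely the spanning trees.

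The implication ``$X$ contains a cycle $\Rightarrow$ $X$ is dependent'' is the easy half. If $C \subseteq X$ is the edge set of a cycle, then every vertex on the cycle is an endpoint of exactly two edges of $C$, while every vertex off the cycle is an endpoint of none. Summing the columns indexed by $C$ (i.e.\ taking coefficient $1$ on $C$ and $0$ elsewhere) thus yields a vector whose entries are $1+1=\1$ at the cycle vertices and $0$ at all others, hence a vector in $\tGz^{(|\gV|)}$. By Definition~\ref{def:tropicDep} the columns of $X$ are dependent, so $X \notin \tH(\inA(\Gr))$.

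The reverse implication, ``cycle-free $\Rightarrow$ independent'', carries the real content. Suppose $X$ is a forest with $k$ edges and $c$ tree components, so it spans $k+c$ vertices. I would choose the row set $Y$ by rooting each component and discarding its root, leaving exactly $k$ vertices; sending each kept vertex to its parent edge is then a bijection onto $X$. Listing the rows and columns component by component in leaf-to-root order, I claim the resulting $k\times k$ submatrix is lower triangular with $1$'s on the diagonal: the diagonal entry records that each vertex meets its own parent edge, while every other edge of $X$ incident to that vertex is the parent edge of one of its children, hence sits in a strictly earlier (deeper) column, i.e.\ strictly below the diagonal. By Lemma~\ref{lem:2.1.f} such a matrix is nonsingular over $\sbool$, so condition~WT gives $X \in \tH(\inA(\Gr))$ (equivalently, the columns of $X$ are independent by Corollary~\ref{cor:sinDep}). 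The step I expect to demand the most care is precisely the verification of this triangular shape---checking that, in the leaf-to-root ordering, no incidence of a kept vertex falls on or above the diagonal---since this is exactly what forces the permanent to be $1$ (a single surviving matching) rather than a ghost permanent, and it is where the cycle-free hypothesis is used to make the parent-edge map well defined and injective.
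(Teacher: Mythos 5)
Your proof is correct, and it differs from the paper's in one of the two directions. For ``acyclic $\Rightarrow$ independent'' you and the paper run essentially the same construction: you root each tree component and use the parent-edge bijection to exhibit a $k\times k$ lower-triangular witness with $1$'s on the diagonal, while the paper peels off leaf vertices one at a time to arrive at the same triangular form \eqref{eq:trgform}; these are two phrasings of one argument (yours is stated for arbitrary forests, the paper's only for spanning trees, with heredity closing the gap), and in both cases Lemma \ref{lem:2.1.f} finishes. The genuine divergence is in the converse. The paper deduces ``$\sbool$-independent $\Rightarrow$ acyclic'' by invoking the rank inequality $\rnk_{\bF_2}\geq\rnk_{\sbool}$ of Proposition \ref{prop:rank} and routing the conclusion back through $\bF_2$ and Theorem \ref{thm:f2g}; you instead prove the contrapositive directly, observing that summing the columns of a simple cycle with all coefficients $1$ gives $1+1=\1$ at every cycle vertex and $0$ elsewhere, an explicit ghost vector witnessing dependence. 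Your route is more self-contained for that direction --- it needs no field-theoretic input and displays the dependence relation explicitly, as the superboolean analogue of the mod-$2$ cycle-space relation --- whereas the paper's is shorter given that Proposition \ref{prop:rank} is already on hand. Both arguments are complete; the only cosmetic caveat in yours is that ``leaf-to-root order'' should be read as any linear extension of the child-before-parent partial order, which always exists for a forest, and that self-loops (whose columns are zero, hence ghost, hence dependent) are a degenerate case covered trivially.
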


\begin{proof}
Write $\f2A$ and $\blA$ for the incidence matrix  $\inA(G)$
considered as a matrix over $\bF_2$ and $\bool$, respectively, and
let $\M(\f2A)$ and $\M(\blA)$ be the corresponding vector
matroids. Theorem \ref{thm:f2g} gives us the correspondence
between the bases of $\M(\f2A)$ and the spanning trees of $\Gr$,
so we need to
 prove that the bases of $\M(\blA)$ are in one-to-one
 correspondence with the spanning trees of $\Gr$.
 (This will also prove that $\M(\blA)$ is indeed a matroid.)

Suppose $|\gV| = \ell$ and let $T_1 \subseteq \gE$ be a spanning
tree of $\Gr$. Then $T_1$ has $\ell-1$ edges. Consider the $\ell
\times (\ell-1)$ submatrix $B_1 := \cl{\blA}{T_1}$ of $\inA(G)$,
cf. Notation \ref{nott}, corresponding to $T_1$  and having two
$1$-entries in each column by construction. Pick a leaf vertex
$i_1 \in \gV$, which belongs to a unique edge $ (i_1,j_1)$ of
spanning tree $T_1$, and erase $i_1$ and its connecting edge
($i_1,j_1)$ from $T_1$ to obtain the subtree $T_2$ of $T_1$ (which
clearly is connected, and has $\ell-2$ edges). This deletion is
expressed by erasing the $i_1$'th row of $B_1$, which is an
$(\ell-1)$-marker, and the column $j_1$ (corresponding to the edge
$(i_1,j_1)$) of $B_1$. Denote this matrix corresponding to subtree
$T_2$ by $B_2$ and let $D_1$ be the matrix composed of the
$i_1$'th row of~ $\cl{\blA}{T_1}$.

%\mnote{B18}
We repeat this process recursively, erasing at each step a new
leaf vertex  $i_k$ from the tree $T_k$ having $\ell-k$ edges,
expressed as a deletion of the row and the column corresponding to
vertex the $i_k$ and its connecting edge $ (i_k,j_k)$ in the
matrix $B_k$, and joining the $i_k$'th row of $\cl{\blA}{T_1}$ to
the matrix  $D_{k-1}$ from below. At the end of this process,
after $\ell -1$ steps, we obtain the triangular $(\ell -1) \times
(\ell -1)$ matrix $D_{\ell -1}$, which by construction is of the
Form \eqref{eq:trgform} -- a nonsingular matrix by Lemma
\ref{lem:2.1.f}. $D_{\ell-1}$  is a submatrix of $\cl{\blA}
{T_1}$, up to permuting of rows, and thus an $(\ell -1) \times
(\ell- 1)$ witness. Therefore, the columns of $\cl{\blA}{T_1}$ are
independent.

Conversely, suppose $X \subseteq \gE$ with $|X| = k$  and assume
that the columns of $\cl{\blA}{X}$ are independent, namely
$\rnk_{\sbool}(\cl{\blA}{X}) = k$. But then, by Proposition
\ref{prop:rank}, $\rnk_{\bF_2}(\cl{\f2A}{X}) \geq k$ which implies
$\rnk_{\bF_2}(\cl{\f2A}{X}) = k$, since $|X| = k$. Hence, by
Theorem \ref{thm:f2g}, $\cl{\f2A}{X}$ corresponds to a spanning
tree of $\Gr$ and so does $\cl{\blA}{X}$.
 \end{proof}

\subsection{Boolean representations of hereditary collections}
Our first result in this section ties boolean representations to
the point replacement property, PR, (cf. Definition~
\ref{def:ratroid}) for an arbitrary hereditary collection.

\begin{theorem} If a hereditary collection $\H = (E,\tH)$ has a  $\bool$-representation,
then $\H$ satisfies PR.
\end{theorem}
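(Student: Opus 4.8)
The plan is to unwind the definitions of $\bool$-representability and of the point replacement property PR, and show that a witness for an independent set, together with a marker guaranteed by nonsingularity, forces the existence of the required replacement element. Suppose $\H = (E,\tH)$ has a $\bool$-representation $\blA = \blA(\H)$, an $m \times n$ boolean matrix whose columns are labeled by $E$. I must verify PR: given $\{p\} \in \tH$ and a nonempty $J \in \tH$, produce $x \in J$ with $J - x + p \in \tH$. By Proposition~\ref{prop:het} it suffices to treat the case $p \notin J$, since if $p \in J$ one may take $x = p$.

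First I would extract the combinatorial data carried by the representation. Since $J = \{x_1, \dots, x_k\} \in \tH$, condition WT gives a witness: a $k \times k$ nonsingular submatrix $\clrw{\blA}{J}{Y}$ for some $Y \subseteq \Rw(\blA)$ with $|Y| = k$. Since $\{p\} \in \tH$, the column of $\blA$ indexed by $p$ is nonzero; fix a row $r$ with a $1$-entry in that column. By Lemma~\ref{lem:2.1.f} the witness $\clrw{\blA}{J}{Y}$ can be independently permuted into the triangular Form~\eqref{eq:trgform}, and by Corollary~\ref{cor:nonsing} each of its rows supports a marker. The key idea I would pursue is to replace one row of the witness by the row $r$ (or to use row $r$ in place of a suitable witness row) so as to build a new $k \times k$ nonsingular submatrix whose columns are labeled by $J - x + p$ for an appropriate $x \in J$.

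More precisely, consider the $k$ columns labeled by $J$ restricted to the $k$ rows $Y$: this is nonsingular, hence after reordering it is lower-triangular with $1$'s on the diagonal, so there is a distinguished bijection between the rows of $Y$ and the elements of $J$. The row $r$ (restricted to the columns $J + p$) meets the $p$-column in a $1$. The plan is to locate the row of the triangularized witness that can be swapped out: using the marker structure, I would argue that appending row $r$ and deleting the column of some $x \in J$ still leaves a $k$-marker, and hence by Lemma~\ref{lem:2.1.f} and Corollary~\ref{cor:nonsing} a $k \times k$ nonsingular submatrix on the columns $J - x + p$. That nonsingular submatrix is precisely a witness, so $J - x + p \in \tH$ by WT, establishing PR.

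The main obstacle I anticipate is controlling what happens in the $p$-column across all the rows $Y$ of the chosen witness: the column for $p$ need not be supported on $Y$, and simply substituting it for the column of some $x_i$ could destroy triangularity (a below-diagonal $1$ in the $p$-column might line up badly, or the resulting matrix might fail to be a marker-carrying triangular form). The delicate step is therefore to choose the element $x \in J$ to delete so that the swap preserves nonsingularity — equivalently, to show that some column-exchange keeps a $k$-marker intact. I expect this to reduce to a pigeonhole or minimality argument on the triangular form: pick $x$ corresponding to the highest diagonal position whose row can accommodate the $p$-column's entries while still yielding a marker. Making this selection precise, and verifying it against the $\bool$-arithmetic of the permanent (so that no spurious $\1$ appears), is where the real work lies.
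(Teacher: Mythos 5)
Your setup matches the paper's: reduce to the case $p \notin J$ via Proposition \ref{prop:het}, fix a triangularized $k\times k$ witness $\clrw{\blA}{J}{Y}$, and use the fact that the $p$-column is nonzero. But the proposal stops exactly where the proof has to happen, and the two steps you defer both contain real problems. First, your intended inference ``appending row $r$ and deleting the column of some $x\in J$ still leaves a $k$-marker, and hence by Lemma \ref{lem:2.1.f} and Corollary \ref{cor:nonsing} a nonsingular submatrix'' runs the corollary backwards: possessing a $k$-marker is a \emph{necessary} consequence of nonsingularity, not a sufficient criterion for it --- the matrix $\left(\begin{smallmatrix}1&0\\1&0\end{smallmatrix}\right)$ carries a $2$-marker in its first row and is singular. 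So even if your swap preserves a marker, you have not produced a witness, and no pigeonhole refinement of that step can repair the logic.

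Second, the case you flag as the ``main obstacle'' --- the $p$-column vanishing on all of $Y$ --- is not resolved by any row-for-row or column-for-column exchange, and the paper does something structurally different there: it \emph{enlarges} the witness. Pick a row $z\notin Y$ where the $p$-column has a nonzero entry (one exists since that column is nonzero and vanishes on $Y$); then $\clrw{\blA}{J\cup\{p\}}{Y\cup\{z\}}$ is a $(k+1)\times(k+1)$ matrix in the triangular Form \eqref{eq:trgform}, because the $p$-column is $0$ on all of $Y$ and its diagonal entry $a_{z,p}$ equals $1$ by booleanness; hence it is nonsingular, the columns $J+p$ are independent, and by heredity of $\H(\blA)$ \emph{every} choice of $x\in J$ gives $J-x+p\in\tH$. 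In the complementary case, where $\clrw{\blA}{P}{Y}$ has a nonzero entry, the correct selection is the minimality argument you only gesture at: take $\ell$ minimal with the $\ell$-th entry of $\clrw{\blA}{P}{Y}$ nonzero; booleanness forces that entry to be $1$ rather than $\1$, so substituting the $p$-column for the $\ell$-th column of the witness preserves the triangular form outright (zeros above the new diagonal $1$), with no marker argument needed. Until you supply both the case split and these two constructions, the proof is incomplete.
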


\begin{proof} The case of $|E| = 0$ is obvious, so throughout we assume that $| E | > 0$.
 Suppose $\H$ is   $\bool$-representable by the matrix $\blA := \blA(\H)$, we need to
verify  Proposition \ref{prop:het}.(ii), that is  $$\text{$\{p\},
X \in \tH$, with $p \notin X \neq \emptyset$,
    implies $\exists x \in X$, such that $X - x + p \in
    \tH$}.$$

Let $P = \{ p \}$ and suppose   $|X| = k$. Reordering
independently the columns and rows of $\blA$ we may assume that
$\blA$ has a $k \times k$ witness  $\clrw{\blA}{X}{Y}$ of the
triangular form \eqref{eq:trgform} for some  $Y \subseteq
\Rw(\blA(\H))$ with $|Y| =k$. (We use again Notation \ref{nott}
for submatrices.)  Clearly $\cl{\blA}{ P }$ has a nonzero entry,
since $P \in \tH$ and $P \neq \emptyset$.

 Assume first that $\clrw{\blA}{P}{Y}$ has a nonzero entry,  and let $\ell$ be the
 smallest index entry of the column
 $\clrw{\blA}{P}{Y}$ which is not zero.   Then, we
are done by interchanging $\cl{\blA}{P}$ with the $\ell$'th column
of $\cl{\blA}{X}$, since we have preserved the triangular form
\eqref{eq:trgform}.  Otherwise, let $Z \supset Y$ be a subset of
rows with $|Z| = |Y|+1$ such that $\clrw{\blA}{P}{Z}$ has a single
nonzero entry. Then $\clrw{\blA}{X \cup P}{ Z}$ is a $(k+1) \times
(k+1)$ witness, and thus any subset of $k$ columns of $\cl{\blA}{X
\cup P}$ is independent by Theorem
\ref{thm:regularityToIndependent}. Namely, any column of
$\cl{\blA}{X}$ can be replaced by $\cl{\blA}{P}$, preserving the
independence relations.
\end{proof}

We are now ready for another main result of this paper relating to
matroids.

\begin{theorem}\label{thm:boolFRep} If a matroid $\M$ is $\bF$-representable, for some
field $\bF$,  then $\M$ is also $\bool$-representable.
\end{theorem}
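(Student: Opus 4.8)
The plan is to prove that any $\bF$-representable matroid $\M$ admits a boolean representation by starting from a field representation $\fA := \fA(\M)$ and converting it, row by row or entry by entry, into a boolean matrix $\blA$ while preserving the independence structure. The key conceptual tool is condition WT: a subset $X \subseteq E$ is independent in $\M$ iff some $|X| \times |X|$ submatrix of $\fA$ on the columns $X$ is nonsingular over $\bF$, and we want the analogous nonsingular (over $\sbool$) witnesses to appear in $\blA$ exactly for the same subsets $X$. Since $\M$ is a matroid, all its bases have the same cardinality $r = \rnk(\M)$ (Lemma \ref{1.4.b:lem}), and a subset is independent iff it extends to a basis.

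\emph{First} I would reduce to the rank-$r$ situation: by Lemma \ref{1.4.b:lem} every basis has exactly $r$ elements, and every witness of maximal independent sets is an $r \times r$ nonsingular minor of $\fA$. The cleanest strategy is to build $\blA$ one basis at a time, mimicking the stacking construction from the proof of Theorem \ref{thm:hdCol}. \emph{For each} basis $J_i \in \tB(\M)$ I would produce a boolean block $\blA(J_i)$ of size $r \times n$ whose columns indexed by $J_i$ form a triangular-form nonsingular witness as in \eqref{eq:trgform}, and whose remaining entries are chosen to record precisely which sub-collections of $J_i$-extending columns remain independent. The point is that over a matroid we have additional regularity --- the exchange property (Proposition \ref{1.4.b:prop}) --- which should let us use genuine $1,0$ entries rather than ghost entries $\1$, so that each block is boolean.

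\emph{The heart of the argument}, and the step I expect to be the main obstacle, is showing that honest boolean entries suffice to encode the matroid's independence relations, i.e.\ that we never need the ghost $\1$ that was essential in the general hereditary-collection construction of Theorem \ref{thm:hdCol}. Here the field representation $\fA$ does the real work: I would argue that after suitable scaling and row reduction, the field witnesses can be arranged so that the pattern of zero versus nonzero entries --- equivalently the boolean image of $\fA$ obtained by replacing every nonzero field entry by $1$ --- already captures independence. Concretely, one wants: for $X$ independent in $\M$, the corresponding columns of $\blA$ contain a triangular nonsingular ($\sbool$) minor; and for $X$ dependent, every $|X| \times |X|$ minor on those columns is singular over $\sbool$. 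The forward direction follows from Proposition \ref{prop:rank}, which gives $\rnk_\bF(A) \geq \rnk_\sbool(A)$, so $\sbool$-independence forces $\bF$-independence. \emph{The delicate direction} is the converse --- ensuring $\bF$-independence is reflected as $\sbool$-independence, since in general boolean rank can drop below field rank; overcoming this is where the matroid (as opposed to arbitrary hereditary collection) hypothesis and the explicit basis-by-basis stacking must be invoked, exactly as in the graphic-matroid case of Proposition \ref{2.1.g:prop}.

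\emph{To finish}, I would verify that the stacked matrix $\blA := \blA(\M)$ obtained by placing the blocks $\blA(J_i)$ one over the other (with respect to their shared column labeling by $E$) represents $\M$, by repeating the marker-and-witness argument of Theorem \ref{thm:hdCol}: an $r$-subset of columns is $\sbool$-independent iff its restriction to the rows of some block $\blA(J_i)$ contains an $r \times r$ witness, which by Corollary \ref{cor:nonsing} harbors an $r$-marker belonging to that block. Since the blocks were constructed precisely from the bases of $\M$, this recovers $\tH(\M)$, and because every entry is in $\{0,1\}$ the representation is boolean, giving $\M$ a $\bool$-representation as claimed.
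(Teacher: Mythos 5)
Your outline is essentially the paper's proof: for each basis $J_i$ row-reduce the field representation $\fA$ so that the columns of $J_i$ carry a unitriangular witness of the form \eqref{eq:trgform}, replace every nonzero field entry by $1$, stack the resulting boolean blocks over the common column labelling, and then check that every old independent set keeps a witness while no new ones appear. So the architecture is right; the issue is with the justification of the second check.

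You justify ``$\sbool$-independence forces $\bF$-independence'' by Proposition \ref{prop:rank}. That proposition compares $\rnk_\bF$ and $\rnk_\sbool$ of \emph{one and the same boolean matrix}: it tells you that a $\sbool$-nonsingular boolean witness $\blD$ inside $\blB$ is nonsingular over $\bF$ when its entries are read as the field's $0$ and $1$. What you actually need is that the submatrix $\fD$ of the field matrix $\fB$ \emph{from which $\blD$ arose by booleanization} is nonsingular over $\bF$, and ``support pattern nonsingular over $\bF$'' does not imply ``field matrix nonsingular over $\bF$'': the pattern $\left(\begin{smallmatrix}1&1&0\\0&1&1\\1&0&1\end{smallmatrix}\right)$ has determinant $2\neq 0$ over $\mathbb{Q}$, yet a rational matrix with exactly that support can be singular (replace the bottom-left $1$ by $-1$). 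So the chain through Proposition \ref{prop:rank} does not close the argument. The correct bridge --- the one the paper uses --- is Lemma \ref{lem:2.1.f}: $\sbool$-nonsingularity of $\blD$ is strictly stronger than $\bF$-nonsingularity of the pattern (note the pattern above has permanent $\1$, hence is $\sbool$-singular), and it forces $\blD$, after independently permuting rows and columns, into the form \eqref{eq:trgform}; consequently $\fD$ has nonzero entries on the diagonal and zeros above it, and is therefore nonsingular over $\bF$ regardless of what its nonzero entries are. Since you already have the triangular-form machinery on the table for the other direction, this is a local repair rather than a flaw in the strategy, but as written the appeal to Proposition \ref{prop:rank} leaves a genuine gap in the step ``no new independent column subsets are introduced.''
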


\begin{proof} Let  $\M$ be a matroid of rank $m$ and suppose it is
$\bF$-representable by the matrix $\fA:= \fA(\M)$. We may assume
that $\fA$ has rank $m$ since otherwise by row operations
(including subtraction, since $\bF$ is a field) we can bring $\fA$
to have exactly $m$ nonzero rows. If $m = 0$ we are done, so
throughout we assume that $m > 0$.

 Let $J_1, \dots, J_\ell$ be the bases of $\M$. Given a basis
 $J_i$, then
 $\fA$ has an $m\times m$ witness $\cl{\fA}{J_i}$ (see Notation
 \ref{nott}). Applying classical row
operations to $\fA$, including subtraction, % since $\bF$ is a field,
we can reduce $\fA$ so that the  submatrix $\cl{\fA}{J_i}$ is a
triangular matrix, i.e., $1$ over all the main diagonal and $0$
above the diagonal; the entries of $\cl{\fA}{E \sm J_i}$ can take
arbitrary values.  We denote this matrix by $\fA^{(i)}$. We repeat
the same process with respect to each basis $J_i$, $i =1, \dots,
\ell$, to obtain the $m\times n$ matrices $\fA^{(i)}$ over $\bF$.

We construct the $ m \ell \times n$ matrix $\fB$ by
\emph{stacking}  the $\ell$ matrices $\fA^{(i)}$ by the indexing
order $i= 1,\dots, \ell$. Note that, since we have used only row
operations to obtain the matrices $\fA^{(i)}$'s, as well as
duplications of rows, the columns of $\fB$ satisfy exactly the
same linear dependence relations which were satisfied by the
columns of $\fA$. Thus $\fB$ is also an $\bF$-representation of
$\M$.

We introduce the boolean matrix $\blB$ obtained from  $\fB$ by
setting all the nonzero entries of $\fB$ to  $1$ and leaving  the
$0$'s as they were. In the same way, we obtain the boolean
matrices $\blB^{(i)}$ from $\fA^{(i)}$. (Of course stacking  the
boolean matrices $\blB^{(i)}$ by the indexing order yields $\blB$
again.)

Let $Y_i \subseteq \Rw(\fB)$ be the rows of $\fB$ corresponding to
the matrices $\fA^{(i)}$. Therefore,  $\clrw{\fA^{(i)}}{J_i}{Y_i}$
is an $m \times m$ witness and so does
$\clrw{\blB^{(i)}}{J_i}{Y_i}$, since is of the Form
\eqref{eq:trgform}. Using Corollary \ref{cor:sinDep}, it easy to
see that by this construction the columns of $\cl{\blB}{J_i}$ are
independent since $\clrw{\blB^{(i)}}{J_i}{Y_i}$ is an $m \times m$
witness contained in $\cl{\blB}{J_i}$.

To complete the proof we need to show that we have not introduced
new independent column subsets other than the ones we had in
$\fA$. Suppose $X \subseteq E$, with $|X| = k$,  and assume that
the columns of $\cl{\blB}{X}$ are independent. Thus,
$\cl{\blB}{X}$ contains a $k \times k$ witness  $\blD :=
\clrw{\blB}{X}{Y}$, for some $Y \subseteq \Rw(\blB)$ with $|Y| =
k$, which by Lemma \ref{lem:2.1.f} is of the Form
\eqref{eq:trgform}, up to permutation. But  the witness $\blD$ was
obtained from a submatrix  $\fD$ of $\fB$ by changing every
nonzero entry to be~$1$. Thus, $\fD$ is also of the Form
\eqref{eq:trgform}, up to permeating of rows and columns, where
the elements below the diagonal are now elements of $\bF$. This
means that the matrix $\fD$ is a $k \times k$ (field) witness,
also a submatrix of $\fB$. Namely the columns of $\cl{\fB}{X}$
were independent, and thus also in the initial
$\bF$-representation $\fA(\M)$ of $\M$.

Therefore  this shows that $\blB $  is a proper boolean
representation of the matroid $\M$.
\end{proof}

Having Theorem \ref{thm:boolFRep} at hand, we can generalize it
much further.

\begin{theorem}\label{thm:matDecom} Suppose $\M = \M_1 \oplus \cdots \oplus
\M_\ell$, where  $\M_i := (E_i, \tH_i)$, is a matroid, with
disjoint $E_i$'s and each $\M_i$ is $\bF_i$-representable matroid
for some field $\bF_i$. Then $\M$ has a boolean representation.
\end{theorem}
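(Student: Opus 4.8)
The plan is to reduce to the preceding theorem and then to paste the resulting boolean representations together \emph{block-diagonally}. First I would apply Theorem~\ref{thm:boolFRep} to each summand: since every $\M_i$ is a matroid that is $\bF_i$-representable, it is $\bool$-representable, say by an $m_i \times n_i$ boolean matrix $\blA(\M_i)$ whose $n_i = |E_i|$ columns are labelled by $E_i$ and for which $\H(\blA(\M_i)) = \M_i$. Writing $M := \sum_i m_i$, the proposed representation of $\M = \M_1 \oplus \cdots \oplus \M_\ell$ is the $M \times n$ boolean matrix
\begin{equation*}
\blB := \(\begin{array}{cccc}
\blA(\M_1) & (0) & \cdots & (0) \\
(0) & \blA(\M_2) & & \vdots \\
\vdots & & \ddots & \\
(0) & \cdots & & \blA(\M_\ell)
\end{array}\),
\end{equation*}
whose columns are labelled by $E = E_1 \; \dot\cup \; \cdots \; \dot\cup \; E_\ell$. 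It is crucial here that the off-diagonal blocks be the \emph{zero} matrix $(0)$ and \emph{not} the ghost block $(\1)$ used in the stacking construction of Theorem~\ref{thm:hdCol}: with ghost fillers the two independent $1$-columns of two distinct rank-one summands would span a $2\times 2$ submatrix $\vMat{1}{\1}{\1}{1}$, whose permanent $1 + \1 = \1$ is a ghost, so the pair would be wrongly recorded as dependent.

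The heart of the argument is the claim that $\H(\blB) = \M$; by Definition~\ref{def:dirctSum} this amounts to showing, for every $X \subseteq E$ with $X_i := X \cap E_i$, that the columns $\cl{\blB}{X}$ are independent in $\sbool^{(M)}$ if and only if each $X_i \in \tH_i$. For the ``if'' direction I would, for each $i$, use Corollary~\ref{cor:sinDep} to select an $|X_i| \times |X_i|$ witness of $X_i$ inside $\blA(\M_i)$, sitting in the rows of the $i$-th block. Because the off-diagonal blocks vanish, assembling the chosen rows yields a submatrix of $\cl{\blB}{X}$ that is block-diagonal with these witnesses on its diagonal; a permutation contributes a nonzero summand to its permanent only if it preserves each block, so the permanent factors as the product of the block-permanents, each equal to $1$, and hence is $1$. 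Thus $\cl{\blB}{X}$ contains an $|X| \times |X|$ witness and $X \in \tH(\blB)$.

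For the converse I would start from an independent $\cl{\blB}{X}$, which by Corollary~\ref{cor:sinDep} contains a $k \times k$ witness $W = \clrw{\blB}{X}{Y}$ with $k = |X| = |Y|$. Grouping the rows $Y$ and columns $X$ by summand, the zero off-diagonal blocks make every entry of $W$ outside the diagonal blocks $\clrw{\blB}{X_i}{Y_i}$ vanish. Since $\per{W} = 1 \neq 0$, some permutation attains this value, and it must send each row-group $Y_i$ into the column-group $X_i$; being a bijection onto disjoint column-groups, it therefore maps $Y_i$ \emph{onto} $X_i$, forcing $|Y_i| = |X_i|$ for every $i$. Hence $W$ is genuinely block-diagonal with \emph{square} diagonal blocks and $\per{W} = \prod_i \per{\clrw{\blB}{X_i}{Y_i}}$. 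In $\sbool$ a product of elements of $\{0,1,\1\}$ equals the tangible $1$ only if every factor equals $1$; so each diagonal block is a witness of $X_i$, whence $X_i \in \tH_i$ by Corollary~\ref{cor:sinDep}. This proves $\H(\blB) = \M$ and exhibits the boolean matrix $\blB$ as a boolean representation of $\M$.

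I expect the main obstacle to lie in the converse direction, namely the bookkeeping that forces an \emph{arbitrary} witness $W$ of $X$ to respect the block decomposition with square diagonal blocks: a priori the witness rows $Y$ need not split as $|Y_i| = |X_i|$, and it is exactly the pigeonhole/bijection argument applied to a permanent-realising permutation --- combined with the multiplicativity of the permanent over a block-diagonal matrix and the fact that in $\sbool$ the element $1$ admits no nontrivial factorisation --- that pins this down. By contrast, the ``if'' direction and the initial reduction through Theorem~\ref{thm:boolFRep} are routine once the zero (as opposed to ghost) choice for the off-diagonal blocks has been made.
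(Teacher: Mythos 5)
Your proposal is correct and follows essentially the same route as the paper: apply Theorem~\ref{thm:boolFRep} to each summand and assemble the resulting boolean representations into a block-diagonal matrix with zero off-diagonal blocks. The only difference is in the verification of the converse inclusion, where you argue via a permanent-realising permutation and multiplicativity of the permanent over square diagonal blocks, while the paper uses the shorter observation that column subsets of independent column sets remain independent and that the all-zero rows outside a block can be discarded; both are valid and your version makes explicit the bookkeeping ($|Y_i|=|X_i|$) that the paper leaves implicit.
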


\begin{proof} As proved in Theorem
\ref{thm:boolFRep}, every $\M_i$ is $\bool$-representable, let
$\blA(\M_i)$ be its $\bool$-representation. Then we claim that
the matroid $\M$ has the $\bool$-representation
$$ \blA(\M) := \( \begin{array}{cccccc}
              \blA(\M_1)  & 0 & \ldots &   0 \\
                   0 &  \ddots &  & \vdots  \\
                   \vdots &  \ddots &  \ddots & 0  \\
                   0 & \cdots & 0& \blA(\M_\ell) \\
                 \end{array}
\).$$

Given any subsets $X_1 \in \tH_1, \dots, X_\ell \in \tH_\ell$,
where $X_i$ can be empty,  clearly the submatrix $\cl{\blA}{
\bigcup_i X_i}$ is of rank $\sum_i |X_i|$ by construction. On the
other hand, suppose that the columns of the submatrix
$\cl{\blA}{X}$ are independent and write $X = X_1 \ds {\dot \cup}
\cdots \ds {\dot \cup} X_\ell$, with $X_i \subseteq E_i$ (could be
$\emptyset$). Let $Y_i \subset \Rw(\blA(\M))$ be the subset of
rows corresponding to $\blA(\M_i)$ in $\blA(\M)$. Then, since
$\cl{\blA}{X}$ is independent, any of its column subsets is also
independent and in particular each $\cl{\blA}{X_i}$. Since we have
not introduced any new $1$-entries in $\blA(\M)$, expect those of
the matrices $\blA^{(i)}:= \blA(\M_i)$, the columns of
$\cl{\blA}{X_i}$ are independent, as well as
$\clrw{\blA}{X_i}{Y_i} = \cl{\blA^{(i)}}{X_i}$, and thus $X_i \in
\tH_i$.
\end{proof}

We can conclude the following immediately:

\begin{corollary} There are hereditary collections (and in particular matroids) which
are  \textbf{not}  $\bF$-representable over any field $\bF$ but do
have a $\bool$-representation.
\end{corollary}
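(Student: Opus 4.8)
The plan is to exhibit a single matroid that is boolean-representable by Theorem~\ref{thm:matDecom} yet is classically known to have no representation over any field, namely the direct sum $\M := F_7 \oplus F_7^-$ of the Fano matroid $F_7$ and the non-Fano matroid $F_7^-$ on disjoint ground sets. First I would recall the two standard characterizations from matroid theory: $F_7$ is $\bF$-representable exactly when $\operatorname{char}(\bF) = 2$ (so in particular it is $\bF_2$-representable), whereas $F_7^-$ is $\bF$-representable exactly when $\operatorname{char}(\bF) \neq 2$. Each summand is therefore field-representable over some field, but the two required fields have incompatible characteristics.

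Given this, the boolean-representability of $\M$ is immediate: since $F_7$ and $F_7^-$ are each field-representable (over possibly different fields), Theorem~\ref{thm:matDecom} produces a block-diagonal $\bool$-representation of $\M = F_7 \oplus F_7^-$. It then remains to confirm the non-representability half, for which I would use the standard fact that a direct sum of matroids is $\bF$-representable over a fixed field $\bF$ if and only if each summand is $\bF$-representable over that same $\bF$ (the ``only if'' direction following because each summand is a deletion minor of the direct sum, and minors preserve $\bF$-representability). Hence, were $\M$ to be $\bF$-representable for some single field $\bF$, both $F_7$ and $F_7^-$ would be $\bF$-representable simultaneously, forcing $\operatorname{char}(\bF) = 2$ and $\operatorname{char}(\bF) \neq 2$ at once---a contradiction. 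This is exactly the celebrated example recorded in~\cite[Corollary~5.4]{Oxley03whatis}.

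The step I expect to be the main obstacle is purely this non-representability half: one must invoke the known characteristic sets of $F_7$ and $F_7^-$ and the minor/direct-sum closure of field-representability, neither of which is proved in the present paper and both of which should simply be cited. The boolean-representability half carries no difficulty, being a direct application of Theorem~\ref{thm:matDecom}. Since $\M$ is a matroid and every matroid is a hereditary collection, this single example settles both assertions of the corollary simultaneously.
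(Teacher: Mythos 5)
Your proposal is correct and follows essentially the same route as the paper: both use the direct sum $F_7 \oplus F_7^-$ of the Fano and non-Fano matroids, invoke the known characteristic conditions (char $2$ versus char $\neq 2$) together with \cite[Corollary 5.4]{Oxley03whatis} for non-representability over any field, and obtain the $\bool$-representation from the decomposition of the direct sum into field-representable summands via Theorem~\ref{thm:matDecom}. Your added remark justifying the ``only if'' direction via minors is a detail the paper leaves to the citation, but it does not change the argument.
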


As an example for the corollary consider the well known matroids,
the Fano matroid $F_7$  and the non-Fano matroid $F_7^-$ (see
Section \ref{ssec:Fano} below for explicit description). It is
known that $F_7$ is $\bF$-repressible iff $\bF$ is a field of
characteristic $2$, while $F_7^-$ has a field representation  iff
  $\bF$ is of characteristic $\neq 2$, cf. \cite[Proposition
5.3]{Oxley03whatis}. Accordingly,  the direct sum $F_7 \oplus
F_7^-$ of these matroids is not representable over any field, cf.
\cite[Corollary 5.4]{Oxley03whatis}, but it is
$\bool$-representable by Theorem~\ref{thm:boolFRep}.

\subsection{Fano and non-Fano matroids}\label{ssec:Fano}
Let
$$ A_7 := \( \begin{array}{ccccccc}
       1 & 0 & 0 & 1 & 1 & 0 & 1 \\
       0 & 1 & 0 & 1 & 0 & 1 & 1 \\
       0 & 0 & 1 & 0 & 1 & 1 & 1 \\
       \end{array} \)
       $$
be the matrix whose columns are labeled by the ground set $E =
\{1, \dots, 7 \}$.

Considering  $A_7$ as a boolean matrix, written $\bool(A_7)$, the
columns of $\bool(A_7)$ are all the possible nonzero boolean
 3-tuples of the $3$-space  $\bool^{(3)}$.
The independent column subsets of $A_7$ correspond to the
independent subsets of vectors of $\bool^{(3)}$, and thus
introduce a $\bool$-vector hereditary collection (cf.
Definition~\ref{defn:VecHC}), denoted by $\H(\bool^{(3)})$ and
identified with  $\bool(A_7)$. Abusing notation we write
$\bool(A_7)$ for this hereditary collection,  but no confusion
should arise.

A direct computation shows that the independent subsets of $E$,
determined by $\bool(A_7)$, are, the empty set, all the subsets
with 1 or 2 elements, and all the  3-subsets of $E$ except the
following ten 3-subsets:
$$
\begin{array}{lllll}
\{ 1,2,4\}, & \{1,3,5 \}, &  \{ 1,6,7\}, & \{2,3,6 \}, & \{2,5,7
\}, \\[1mm]
 \{3,4,7 \}, &  \{4,5,6 \}, & \{4,5,7 \}, &  \{4,6,7 \},& \{5,6,7
\}.\end{array}
$$
$\bool(A_7)$ satisfies PR but is not a matroid, since considering
the  two bases
$$ B_1 = \{1,5,7 \} \qquad \text{and} \qquad B_2 = \{2,4,7 \}$$
we see that the element  $5$ from $B_1$ can  replace neither $2$
nor $4$ and preserve BR. The hereditary collection
$\H(\bool^{(3)})$ has $35-10= 25$ bases.

Next, consider $A_7$  as a matrix over a field $\bF_2$ of
characteristic  $2$ to  obtain the Fano matroid $F_7 :=
\bF_2(A_7)$, described by the diagram
$$\xy (17,0)*+{2},(63,0)*+{3},
 (40,-3)*+{6},(53,12)*+{5},
(27,12)*+{4}, (40,28)*+{1}, (43,10)*+{7},
(20,0)*+{\bullet},(60,0)*+{\bullet},
 (40,0)*+{\bullet},(50,11)*+{\bullet},
(30,11)*+{\bullet}, (40,25)*+{\bullet}, (40,8)*+{\bullet},
(19,-1)*+{}; (41,26)*+{}; **\crv{}, (20,0)*+{}; (60,0)*+{};
**\crv{},(39,26)*+{}; (61,-1)*+{}; **\crv{},(19,0)*+{};
(51,12)*+{}; **\crv{},(29,12)*+{}; (61,0)*+{};
**\crv{},(40,0)*+{}; (40,25)*+{}; **\crv{}
,(29,13)*+{}; (51,13)*+{};  **\crv{(40,-13)}
\endxy$$
(See \cite{oxley:matroid} for more explanation of the notation.)

The bases of the matroid $F_7$ are all the 3-subsets of $E$ except
those 3-subsets which lie on a same line (could also be a curved
line); these 3-subsets are:
$$
\begin{array}{lllll}
\{ 1,2,4\}, & \{1,3,5 \}, &  \{ 1,6,7\}, & \{2,3,6 \}, & \{2,5,7
\}, \\[1mm]
 \{3,4,7 \}, &  \{4,5,6 \}. &  &  & \end{array}
$$ So, we  have joined  the three independent 3-subsets $\{4,5,7 \},
\{4,6,7 \}, \{5,6,7 \}$ to those of  $\bool(A_7)$.

The non-Fano matroid, denoted $F_7^-:= \bF_3(A_7)$, is given by
the diagram
$$\xy (17,0)*+{2},(63,0)*+{3},
 (40,-3)*+{6},(53,12)*+{5},
(27,12)*+{4}, (40,28)*+{1}, (43,10)*+{7},
(20,0)*+{\bullet},(60,0)*+{\bullet},
 (40,0)*+{\bullet},(50,12)*+{\bullet},
(30,12)*+{\bullet}, (40,25)*+{\bullet}, (40,8)*+{\bullet},
(19,-1)*+{}; (41,26)*+{}; **\crv{}, (20,0)*+{}; (60,0)*+{};
**\crv{},(39,26)*+{}; (61,-1)*+{}; **\crv{},(19,0)*+{};
(51,12)*+{}; **\crv{},(29,12)*+{}; (61,0)*+{};
**\crv{},(40,0)*+{}; (40,25)*+{}; **\crv{}
%
%,(29,13)*+{}; (51,13)*+{};  **\crv{(40,-13)}
\endxy$$
with  $A_7$ considered as a matrix over a field $\bF_3$ of
characteristic $3$. The bases of $F_7^-$ are then all the
3-subsets of $E$ except:
$$
\begin{array}{lllll}
\{ 1,2,4\}, & \{1,3,5 \}, &  \{ 1,6,7\}, & \{2,3,6 \}, & \{2,5,7
\}, \\[1mm]
 \{3,4,7 \}. &  &  &  & \end{array}
$$
The boolean representation  $A_\bool(F_7)$ of $F_7$,  obtained
from $\bool(A_7)$, is given by the matrix
$$ A_\bool(F_7) = \( \begin{array}{ccccccc}
       1 & 0 & 0 & 1 & 1 & 0 & 1 \\
       0 & 1 & 0 & 1 & 0 & 1 & 1 \\
       0 & 0 & 1 & 0 & 1 & 1 & 1 \\
       0 & 1 & 1 & 1 & 1 & 0 & 0 \\
       1 & 0 & 1 & 1 & 0 & 1 & 0 \\
       \end{array} \).
       $$
       One sees that the restriction of $A_\bool(F_7)$ to
       the three upper rows is the matrix $\bool(A_7)$.
       It easy to verify that the two bottom lines of
       $A_\bool(F_7)$ provides the independence of the $3$-subsets $\{4,5,7 \}, \{4,6,7 \}, \{5,6,7
       \}$ and has no influence on the other dependent 3-subsets.

By the same argument we obtain from  the matrix $A_\bool(F_7)$,
the $\bool$-representation $A_\bool(F_7^-)$ of $F_7^-$:
$$ A_\bool(F_7^-) = \( \begin{array}{ccccccc}
       1 & 0 & 0 & 1 & 1 & 0 & 1 \\
       0 & 1 & 0 & 1 & 0 & 1 & 1 \\
       0 & 0 & 1 & 0 & 1 & 1 & 1 \\
       0 & 1 & 1 & 1 & 1 & 0 & 0 \\
       1 & 0 & 1 & 1 & 0 & 1 & 0 \\
       1 & 1 & 1 & 1 & 0 & 0 & 1 \\
       \end{array} \),
       $$
where now we had to add an  additional row to make the column set
$\{4,5,6 \}$ independent without changing the existing dependence
relations of the columns of  $A_\bool(F_7)$.

 The matrix
$$ A(F_7 \oplus F_7^-) =
\left(%
\begin{array}{c|c}
  A_\bool(F_7) & 0 \\
  & \\ \hline
   & \\
 0  & A_\bool(F_7^-) \\
\end{array}%
\right) $$ gives a boolean representation of the direct sum $F_7
\oplus F_7^-$, which is  a matroid that is known not to be
representable over any  field, cf. \cite[Corollary
5.4]{Oxley03whatis}.

\subsection{Boolean representations
and matching in bipartite graphs}  Given a hereditary collection
$\H = (E,\tH)$, with $|E| = n$, that has a boolean-representation
by an $m \times n$ matrix $\blA := \blA(\H)$, we associate the
matrix $\blA = (a_{i,j})$ with the bipartite graph $G := (\gV'
\cup \gV'', \gE)$ having $m +n $ vertices $\gV' \cup \gV''$, i.e.,
$|\gV'| = m$ and $|\gV''| = n$, and edges $(i',j'') \in \gE$,
where $i' \in \gV'$ and $j'' \in \gV''$, iff $a_{i',j''} =1$ in
the matrix~ $\blA$. (See \cite[Chapter 2]{murota}, and also
\cite{Jungnickel}, \cite{oxley:matroid}, for more details.)

Recall that by condition WT (cf. Definition \ref{def:ratroid}) a
subset $X \subseteq E$ (realized also as a column subset $X
\subseteq \Cl(\blA)$) is independent iff there exists a row subset
$Y \subseteq \Rw(\blA)$ with $|X| = |Y|$ such that the submatrix
$\clrw{\blA}{X}{Y}$ is a witness (cf. Notation \ref{nott}).
Abusing the notation and considering respectively $X$ and $Y$ also
as vertex subsets of $\gV''$ and $\gV'$ in the graph $G$, we see
that $X$ is independent in $\H$ iff there exists a vertex subset
$Y \subseteq \gV'$ so that $G$ has a \textbf{unique matching} of
$X$ onto $Y$. That is,  there is one and only one matching of $X$
onto $Y$ in the graph $G$ of $\blA$.

 If we consider the vertex subsets
of $\gV''$ which have some matching (not necessarily unique) to
some vertex subsets $Y$ of $\gV'$ with respect to $G$, we obtain
the usual transversal matroid of $G$ (see \cite{oxley:matroid}).
When we restrict to those subsets of $\gV''$ having unique onto
matchings in the above sense, we obtain, in general, a hereditary
collection over the same ground structure with less independent
subsets of $\gV''$ which satisfy PR (cf. Definition
\ref{def:ratroid}) but is not necessarily a matroid. However,
since every traversal matroid is representable by some field (cf.
\cite{oxley:matroid}), every transversal matroid corresponds to
the unique onto  matchings of some, in general different,
bipartite graph by Theorem \ref{thm:boolFRep}.

   Thus,  boolean representations of hereditary collection
   (which must satisfy PR) have a strong connection to classical matching theory.

%******************************* section *********************************
%\section{Ratroid representations}

%******************************* section *********************************
\section{Open questions}\label{sec:openQ}

We open with the main questions first.

\begin{question}\label{q:4.1.a}
Do all the matroids have to have  a $\bool$-representation? \\ If
not, which matroids do have $\bool$-representations?
\end{question}

\begin{question}\label{q:4.1.b}
Do all the hereditary collections satisfying PR have
$\bool$-representations? \\ If not, which such hereditary
collections have $\bool$-representations?
\end{question}

\begin{question}\label{q:4.1.c}
Which hereditary collections (matroids) with $|E| =n$ are
$\sbool$-representable by $m \times n$ matrices where $m \leq n$?

\end{question}

Let $\bK$ be a commutative semiring, for a given hereditary
collection $\H$ we define the function $$ s_\bK: \ds \H \to  \bN
\cup \{\infty \}$$ whose value in $\bN$ is  the minimal number of
rows of any $\bK$-representation of $\H$ and  is $\infty$ when
$\H$ is not $\bK$-representable. Clearly $s_\bK(\H) \geq \rnk(\H)$
for any $\H$. For example, we showed in Example \ref{exp:U2n} that
$s_\bool(U_{2,n}) \leq n-1$ over the boolean \semiring.

\begin{question}\label{q:4.2.b} Compute $s_\bK(\H)$.
\end{question}

\begin{question}\label{q:4.2.b} Given a hereditary
collection, is $s_\bK(\H)$ computable in the computer science
sense for $\bK$ boolean, superboolean, max plus,  etc.?
\end{question}

%Of course,  if $\bK$ is finite with $k$ elements, then only $n$ to
%$k$ possible rows are needed, so looking at subsets of these rows
%makes it decidable, but the correct question is what is the
%computational complexity?  etc

When $\bK$ is a field and $\M$ is a matroid $s_\bK(\M)$
    is either $\infty$ or equals to the rank of $\M$. So the main
    questions are the  Rota's conjectures \cite{Oxley03whatis,oxley:matroid}.

\begin{question}\label{q:4.2.b}  What are the possible values of $s_\bK(\H)$
when $\H$ is hereditary collection which also satisfies PR  and
$\bK$ is the boolean or the superboolean \semiring? What are the
lower and upper bounds for $\sbool$-representations?
\end{question}

Theorem \ref{thm:hdCol} shows that $s_\sbool(\H) < \infty$ for any
hereditary collection $\H$  and  a  bound on  $s_\sbool(\H)$ is
given in Equation~\eqref{eq:upperBond}.
\begin{question}\label{q:4.2.b} When is $s_\sbool(\H) \leq |E|$, or
$s_\sbool(\H) = \rnk(\H)$? Find a lower bound for
$\sbool$-representations with respect to the rank of $\H$.
\end{question}

\section*{Appendix A. Tropical and supertropical algebra}

 A \textbf{semiring} $(R,+,\cdot \, , \rzero, \rone)$, written $(R,+,\cdot \, )$ for short,  is a set
$R$ endowed with two  binary operations $+$ and $\cdot \, $,
addition and multiplication, respectively, and distinguished
elements $ \rzero$ and $\rone$,
 such that  $(R,\cdot \, , \rone)$ is a monoid and $(R,+,\rzero)$
 is an commutative
monoid satisfying distributivity of multiplication over addition
on both sides, and such that $\rzero \cdot a = a \cdot \rzero =
\rzero$ for every $a \in R$ \cite[\S8-\S9]{qtheory}.
 A (two sided) semiring \textbf{ideal} $\mfa$ of $R := (R,+,\cdot \, , \rzero, \rone)$ is
 an additive  subgroup of $(R, +, \zero)$, i.e., $a , b \in \mfa$
 implies $a+  b \in \mfa$,
 for which $x a \in \mfa$ and $a x  \in \mfa$ for every  $x \in R$ and $a \in
\mfa.$

 A semiring $R$ is additively
\textbf{idempotent} if $a + a = a$ for every $a \in R$. Letting $R
^ \times := R \sm \{ \rzero \}$,  when $( R ^ \times,\cdot \, ,
\rone)$ is an Abelian group, we say that $R$ is a
\textbf{semifield}. The notion of semifield does not have a formal
consistent definition in the literature, for that reason we
preserve the terminology of semirings along this paper.

\subsection*{A.1. Tropical structures} Traditionally, tropical algebra
takes place over the tropical (max-plus) \semiring \
$\Real_{(\max, + )} := (\Real \cup \{ -\infty\}, \max, +)$, the
real numbers together with the formal element $-\infty$ equipped
with the operations of maximum and summation, providing
respectively the \semiring \ addition and the multiplication. Over
this setting $\zero:= -\infty$ is the zero element of the
\semiring \ and the number $0$ is the multiplicative unit, denoted
$\one$. Dually, one  has the min-plus \semiring \ $\Real_{(\min, +
)} := (\Real \cup \{ \infty\}, \min, +)$, where now  $\zero:=
\infty$. (Both structures are semifields according to the above
definition.)

\begin{remark*} The boolean \semiring \ is embedded naturally in the
tropical \semiring \ $\Real_{(\max, + )}$, the embedding $\varphi:
\bool \hookrightarrow \Real_{(\max, + )}$ is given by
$$ \varphi : 1 \mapsto 0, \qquad \varphi:0 \mapsto -\infty.$$
\end{remark*}

The max-plus \semiring \ $\Real_{(\max, + )}$ is a special case of
an (additive) idempotent \semiring \ \cite{Litvinov2005}, i.e., a
\semiring \ in which $a+a =a$ for any $a \in \Real$. In general,
one may replace the \semiring \ $\Real_{(\max, + )}$ by an
idempotent \semiring \ $R := (R, +, \cdot \; )$ satisfying the
\textbf{bipotence property}
$$ a + b \in \{ a , b \}, \qquad \text{ for any } a,b \in R.$$
(Note that $R$ is then ordered by the role $a > b \Leftrightarrow
a + b = a$.) We call such a \semiring \ a \textbf{bipotent
\semiring}; for example the boolean \semiring , as well as  the
tropical semiring, is a  bipotent \semiring.

Bipotent semirings arisen naturally from (totally) ordered
cancellative monoids in the following way. Given an ordered monoid
$(M , \cdot \ )$, we adjoin $M$ with the formal element $-\infty$,
declaring $-\infty < a$ for any $a \in M$. Then, the addition of
$M \cup \{ -\infty \}$ is defined as
$$  a + b = \max\{ a,b \} \qquad \text{ for any } a,b \in M, $$
where the multiplication is given by the original monoid operation
of $M$, extended  with $a (-\infty) = (-\infty) a = -\infty$. By
this construction,  when the monoid $M$ is an Abelian group, the
obtained semiring is a semifield.

\subsection*{A.2. Supertropical structures}
A \textbf{supertropical semiring} is a semiring $R := (R, +, \cdot
\;, \tGz, \nu )$  with a distinguished ideal $\tGz$, called the
\textbf{ghost ideal}, and a semiring projection $\nu: R \to \tGz$,
satisfying the axiom (writing $a^\nu$ for $\nu(a)$):
$$ \begin{array}{lll}
\text{\emph{Supertropicality}:}  &  a+b = a^\nu  &  \text{ if } \
\
a^\nu = b^\nu. \\
\text{\emph{Bipotence}:} &  a+b \in \{a,b \} & \text{  if } \ \
a^\nu \neq  b^\nu.
 \end{array}
$$
Thus $\tGz$ is equipped with the natural partial order $ a^\nu \ge
b^\nu \ \text{iff}\  a^\nu + b^\nu =a^\nu,$ which is incorporated
into the semiring structure, written $$ a
>_\nu b \qquad\text{iff}\qquad  \nu(a) > \nu(b).$$

Note that, by definition, in the supertropical arithmetics  we
have
$$ \one +
 \one =  \one  +
 \one +  \one = \ds \cdots = \one  +
 \one + \cdots  + \one = \one^\nu$$  for any arbitrary number of summands greater than two,
 and furthermore $$\text{ $a+ a
 = a +a +a = \ds \cdots = a + a + \cdots +a =
 a^\nu,$ \qquad for any $a \in R$;}$$
therefore a supertropical semiring is not idempotent. Accordingly
we also have $$\one + \one^\nu = \one^\nu + \one^\nu = \ds \cdots
= \one^\nu + \cdots + \one^\nu = \one^\nu,$$ and the same for $a +
a^\nu = a^\nu$.

 A \textbf{supertropical semifield}  $F := (F,  +,
\cdot \;, \tGz, \nu)$ is a supertropical semiring with a totally
ordered ghost ideal~$\tGz$, such that $\tT := F \setminus \tGz$ is
an Ablian group, called the group of \textbf{tangible elements},
for which the restriction $\nu| _\tT : \tT \to \tG$ is onto.

\subsection*{A.3. Supertropicalization}
Any  bipotent semiring $R = (R, + , \cdot \; )$ can be
``supertropicalized'' as following, cf.
\cite{IzhakianRowen2007SuperTropical}. Consider the disjoint union
$$T(R) := \tT  \ \dot \cup \ \{ \zero \} \ \dot \cup \  \tG,$$ with $\tT =
\tG = R \sm \{ \zero \}$. Denote the members of $\tG$ by $a^\nu$,
for each  $a \in \tT$,  and let $\nu : T(R) \to \tG $ be the map
sending $a \mapsto a^\nu$ and be the identity on $\tG \cup \{
\zero\}$. Writing $x,y$ for general elements in $R$, the new
semiring operations $\iplus$ and $\idot$, addition and
multiplication respectively,  are then defined as:
$$ x \iplus y = \left\{ \begin{array}{ll}
            x  &    \nu(x)  > \nu (y), \\[1mm]
            y  &    \nu(x) < \nu (y), \\[1mm]
            \nu(x)    & \nu(x) = \nu (y), \\[1mm]
           \end{array} \right.  $$
for any $x,y \in R$, and
            $$
           \begin{array}{ll }
             a \idot b  = a  b   &  a,b \in \tT,\\[1mm]
             a \idot b^\nu = b^\nu \idot a  = (a  b)^\nu   &  a \in \tT, b^\nu \in \tG,\\[1mm]
             x \idot \zero = \zero \idot x = \zero    &  \forall x \in T(R).\\[1mm]
           \end{array}
$$
Then, $T(R) := (T(R), \iplus, \idot , \tGz, \nu )$, with $\tGz$
and the ghost map $\nu : T(R) \to \tG $ as defined above, is a
supertropical semiring.

The ghost ideal in this construction is the copy $R^\nu$ of $R$
and the \textbf{tangible} elements are $\tT = R \sm \{ \zero \}$.
Moreover, the semiring ideal $\tG :=  R ^\nu$ is a semiring by
itself isomorphic to $R$, therefore $\nu$ composed with this
isomorphism provides an epimorphism $T(R) \to R$. When the initial
semiring $R$ is a semifield, then supertropicalization $T(R)$ is a
supertropical semifield.

A major example of the above construction is provided by starting
with the familiar max-plus algebra. Taking the
supertropicalization of the standard tropical semiring
$\Real_{(\max, + )}$ we obtained the extended tropical
semiring~\cite{zur05TropicalAlgebra}
$$\Trop :=  T(\Real_{(\max, + )}) = \Real \cup \{ - \infty \} \cup \Real^\nu,$$ having the
tangibles $\tT  := \Real$ and ghosts  $\tG := \Real^\nu,$ the
ghost map is given by $a \mapsto a^\nu$ for any $a \in \Real$,
where the semiring operations of $\Trop$ are as described above.
Therefore, $\Trop$ can be thought of as the super-max-plus
algebra.

Using the same construction, one sees that the superboolean
\semiring \ is a supertropicalization of the boolean \semiring, in
other words $\sbool = T(\bool)$.

\subsection*{A.4. Supertropical matrix algebra} Given a supertropical semifield $R$, the algebra
of matrices over~$R$ is developed exactly along the same line of
\S\ref{ssec:matrixAlg}, see \cite{IzhakianRowen2008Matrices,
IzhakianRowen2009Equations}, using similar definitions which are
now  taken with respect to the larger ghost ideal of the ground
supertropical semifield. The results from matrix algebra presented
in our exhibition in \S\ref{ssec:matrixAlg}, Theorem
\ref{thm:regularityToIndependent}, Corollary \ref{cor:n+1vectors},
Corollary \ref{cor:nRank},  and Theorem \ref{thm:rnkSing}, are all
valid in general for matrices taking place over any supertropical
semifield,
{\cite{IzhakianRowen2008Matrices,IzhakianRowen2009TropicalRank}}.

\section*{Appendix B. Tropical representations of hereditary collections}

Superboolean representations of hereditary collections can be
performed in a much wider context obtained  by replacing the
ground superboolean semiring  $\sbool$ by a supertropical
semifield  $F$, for example by $\Trop := T(\Real_{(\max, + )})$.
Namely, given an $m \times n$ matrix $\ffA$ over a supertropical
semifield $F$, we associate the ground set $E := E(\ffA)$ to the
set of columns $\Cl(\ffA)$ of $\ffA$, which as usual are realized
as vectors in~$F^{(n)}$. The independent subsets $\tH :=
\tH(\ffA)$ of $E$ are subsets corresponding to column subsets that
are tropically independent of the $n$-space ~$F^{(n)}$, cf.
\cite[Definition 6.3]{IzhakianRowen2008Matrices}. The $F$-vector
hereditary collection $(E(\ffA),\tH(\ffA))$ is denoted $\H(\ffA)$.
A hereditary collection  $\H'$ that is isomorphic to $\H(\ffA)$
for some matrix $\ffA$ over a supertropical semifield~$F$ is
called $F$-representable; the matrix $\ffA$ is called an
$F${-representation} of $\H'$.

There is a natural  semiring embedding $\varphi: \sbool
\hookrightarrow F$, given by $ \varphi : 1 \mapsto \one,$   $
\varphi : \1  \mapsto \one^\nu ,$ $  \varphi:0 \mapsto \zero,$ of
the superboolean \semiring \ $\sbool$  into  an arbitrary
supertropical \semifield \ $F$. Since $\{ \one, \one^\nu, \zero\}
\subseteq F$ is  a sub-semiring of $F$, this embedding induces a
natural matrix embedding $\widetilde \varphi: M_n(\sbool)
\hookrightarrow M_n(F)$, and thus an
 embedding of representations. Therefore,
$\sbool$-representations can be viewed as $F$-representations,
which in a sense are more comprehensive than
$\sbool$-representations, and more generally as
$R$-representations, for~$R$ a (commutative)  supertropical
semiring. Then, by Theorem \ref{thm:hdCol}, we immediately
conclude the following.
\begin{corollary*}
Every hereditary collection is $R${-representable}, over any
  supertropical semiring~ $R$.
\end{corollary*}

Of course one can construct ``richer'' $F${-representations} of
hereditary collections by involving elements of $F$ other than
$\zero$, $\one$, or $ \one^\nu$.

In general, all the results within this paper can be stated in the
context of supertropical \semifield s. However, to make the
exposition clearer, in this paper we have used the simpler
structure of matrices over the superboolean semiring $\sbool$,
aiming to introduce the idea of representing hereditary
collections by considering matrices over semirings.  As have been
shown these matrices are suitable enough for this purpose.

$F${-representations} of matroids, and more genrally of hereditary
collections, will be discussed in details in a future paper.

%******************************* Reference *********************************

\end{document}